\newtheorem {theorem}{Theorem}[section]
\newtheorem {corollary}{Corollary}[section]
\newtheorem {proposition}{Proposition}[section]
\newtheorem {lemma}{Lemma}[section]
\theoremstyle{definition}
\newtheorem{definition}{Definition}[section]
\newtheorem {example}{Example}[section]
\newtheorem{remark}{Remark}[section]
\newcommand{\rank}{{\rm rank}}
\newcommand{\co}{{\rm conv}}
\renewcommand{\span}{{\rm span}}
\newcommand{\dis}{\displaystyle}
\def\ow{o\kern-.42em\raise.82ex\hbox{
   \vrule width .12em height .0ex depth .075ex \kern-0.16em \char'56}\kern-.07em}
\def\OW{o\kern-.460em\raise1.36ex\hbox{
\vrule width .13em height .0ex depth .075ex \kern-0.16em
\char'56}\kern-.07em}
\def\DD{D\kern-.7em\raise0.4ex\hbox{\char '55}\kern.33em}
\title{Global {{\L}}ojasiewicz inequalities on comparing the rate of growth of polynomial functions}
\author[S. T. \DD inh]{S\~i-Ti\d{\^e}p \DD inh$^\dag$}
\address{Institute of Mathematics, VAST, 18, Hoang Quoc Viet Road, Cau Giay District 10307, Hanoi, Vietnam}
\email{dstiep@math.ac.vn}
\author{Feng Guo$^\ddag$}
\address{School of Mathematical Sciences, Dalian  University of Technology, Dalian, 116024, China}
\email{fguo@dlut.edu.cn}
\author[T. S. Ph\d{a}m]{Ti\'{\^{e}}n-S\OW n Ph\d{a}m$^*$}
\address{Department of Mathematics, Dalat University, 1 Phu Dong Thien Vuong, Dalat, Vietnam}
\email{sonpt@dlu.edu.vn}
\date{\today}
\subjclass[2010]{Primary 14P10; Secondary 14D06, 58K05, 32S20}
\keywords{\L ojasiewicz inequalities, asymptotic critical values, Newton polyhedra, non-degeneracy at infinity}
\begin{document}

\begin{abstract}
We present a global version of the {{\L}}ojasiewicz inequality on comparing the rate of growth of two polynomial functions in the case the mapping defined by these functions is (Newton) non-degenerate at infinity. In addition, we show that the condition of non-degeneracy at infinity is generic in the sense that it holds in an open dense semi-algebraic set of the entire space of input data.
\end{abstract}

\maketitle

\section{Introduction}

Let $K$ be a compact semi-algebraic subset of $\mathbb{R}^n$ and let $g, h\colon K\to\mathbb R$ be continuous semi-algebraic functions such that the zero set of $g$ is contained in the zero set of $h.$ Then the information concerning the rate of growth of $g$ and $h$ is given by the following {\em \L ojasiewicz inequality}: there exist constants $c > 0$ and $\alpha > 0$ such that for any $x \in K$, we have
\begin{eqnarray*}
|g(x)|^{\alpha} & \ge & c|h(x)|.
\end{eqnarray*}

Note that if $K$ is not compact, the \L ojasiewicz inequality does not always hold  (see Example~\ref{Example31} below). Recently, several versions of the \L ojasiewicz inequality have been studied for a special case where $h$ is the distance function to the zero set of $g,$ see \cite{Dinh2017-2, Dinh2012, Dinh2014-1, Dinh2013, Dinh2019, Dinh2016-1, HaHV2013, HaHV2015-1}.  However, the study of the {\L}ojasiewicz inequality on comparing the rate of growth of two {\em arbitrary} semi-algebraic functions on {\em non-compact} semi-algebraic sets is barely developed (cf. \cite{Loi2016}).

We would like to point out that the {\L}ojasiewicz inequality and its variants play an important role in many branches of mathematics.
For example, {\L}ojasiewicz inequalities are very useful in the study of continuous regular functions, a branch of Algebraic Geometry, which has been actively developed recently, see \cite{Fichou2016, Kucharz2009} for pioneering works and \cite{Kucharz2019} for a survey. Also, {\L}ojasiewicz inequalities, together with Nullstellens\"atz, are crucial tools for the study of the ring of (bounded) continuous semi-algebraic functions on a semi-algebraic set, see \cite{Fernando2014-1, Fernando2014-2, Fernando2015}. 

The purpose of this work is to show that for almost all pairs of polynomial functions, a variant of the {\L}ojasiewicz inequality holds on the entire space.
Namely, with the definitions given in Section~\ref{SectionPreliminary}, the following statements hold.

\begin{theorem}\label{Theorem11}
Let $(g, h) \colon \mathbb{R}^n \rightarrow \mathbb{R}^2$ be a polynomial mapping, which is non-degenerate at infinity. If $g$ is convenient and $g^{-1}(0) \subset h^{-1}(0),$ then there exist some constants $c > 0, \alpha > 0,$ and $\beta > 0$ such that
\begin{eqnarray*}
|g(x)|^{\alpha} + |g(x)|^{\beta} &\ge& c|h(x)| \quad \textrm{ for all } \quad x \in \mathbb{R}^n.
\end{eqnarray*}
\end{theorem}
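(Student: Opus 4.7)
The plan is to split $\mathbb{R}^n$ into two regimes according to the size of $|g(x)|$. On $\{x : |g(x)| \le 1\}$, the term $|g(x)|^{\alpha}$ dominates $|g(x)|^{\beta}$ provided $\alpha \le \beta$, while on $\{x : |g(x)| \ge 1\}$ the term $|g(x)|^{\beta}$ dominates; it therefore suffices to establish separately the single-power inequalities $|g(x)|^{\alpha} \ge c\,|h(x)|$ on the sublevel set $\{|g|\le 1\}$ and $|g(x)|^{\beta} \ge c\,|h(x)|$ on the superlevel set $\{|g|\ge 1\}$. I would further decompose each set into a bounded piece $\|x\| \le R$ and an unbounded piece $\|x\| > R$; on the compact piece the classical \L ojasiewicz inequality applies, using continuity of $g,h$ together with the inclusion $g^{-1}(0) \subset h^{-1}(0)$, and yields admissible exponents.

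The heart of the proof therefore lies on the two unbounded pieces, which I would treat by contradiction together with the semi-algebraic curve selection lemma at infinity. A sequence $\{x_k\}$ violating either inequality, with $\|x_k\| \to \infty$, would produce a real analytic semi-algebraic curve $\gamma\colon(0,\varepsilon)\to\mathbb{R}^n$ satisfying $\|\gamma(t)\|\to\infty$ and $|h(\gamma(t))|/|g(\gamma(t))|^{\rho} \to \infty$ for every fixed $\rho > 0$. Expanding $\gamma$ in Puiseux series at infinity, the leading exponents of $g\circ\gamma$ and of $h\circ\gamma$ are determined by the restrictions of $g$ and $h$ to the face of the Newton polyhedron of $(g,h)$ at infinity selected by the weight vector of $\gamma$.

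Convenience of $g$ guarantees that its Newton polyhedron meets every coordinate axis, so the leading coefficient of $g\circ\gamma$ is the value of a nontrivial face-polynomial of $g$ at a point whose coordinates are the leading coefficients of $\gamma$; in particular, the order of $g\circ\gamma$ is finite and of the expected sign in both regimes $|g|\to 0$ and $|g|\to\infty$. The non-degeneracy hypothesis at infinity then forces this leading value to be nonzero whenever $\gamma$ lies in the corresponding torus orbit, which pins down the ratio of the orders of $h\circ\gamma$ and $g\circ\gamma$ to a finite number determined by the Newton data and contradicts the construction of $\gamma$. The main obstacle will be the uniform bookkeeping of Puiseux exponents across all faces at infinity, and extracting a single pair $(\alpha,\beta)$ that works independently of which face is selected; this is the combinatorial core of the argument, and it is where the convenience of $g$ and the non-degeneracy of $(g,h)$ play complementary roles.
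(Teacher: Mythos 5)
Your overall architecture (split into the regimes $\{|g|\le 1\}$ and $\{|g|\ge 1\}$, handle compact pieces by the classical \L ojasiewicz inequality, attack the unbounded pieces by contradiction and the curve selection lemma at infinity) is reasonable and roughly parallels the reduction the paper makes in Lemma~\ref{Lemma33}. But the core step of your sketch is based on a misreading of the hypothesis: non-degeneracy at infinity is \emph{not} the statement that the face polynomials are nonvanishing on the torus. It is a rank condition on the gradients $\nabla f_{i,\Delta_i}$ \emph{on} the zero set $\{f_{i,\Delta_i}=0\}\cap(\mathbb{R}^*)^n$ for faces with $d(q,\Gamma(f_i))<0$. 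Consequently, along the curve $\gamma$ produced by curve selection, the leading coefficient of $g\circ\gamma$, which is $g_{\Delta}(x^0)$ evaluated at the vector $x^0$ of leading coefficients of $\gamma$, can perfectly well vanish for a non-degenerate $g$ (already $g=x_1^2-x_2^2+\cdots$ shows this, and in Example~\ref{Example31} the relevant face polynomials do vanish at torus points). Once $g_\Delta(x^0)=0$, the order of $g\circ\gamma$ is strictly larger than $d(q,\Gamma(g))$ and is no longer determined by the Newton data, so you cannot ``pin down the ratio of the orders of $h\circ\gamma$ and $g\circ\gamma$'' and no contradiction follows. This is not the uniform-bookkeeping difficulty you flag at the end; it is a genuine gap that defeats the approach as described.

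What is missing is a mechanism that brings \emph{gradient} information into the picture, because that is the only thing non-degeneracy controls. A sequence merely violating the inequality carries no such information, and this is exactly why the paper first converts it, via Ekeland's variational principle (Theorem~\ref{th::ekeland}) applied to $|g|$ on $\{|h|\ge\delta\}$, resp.\ $\{|h|\ge M\}$, into approximate critical points satisfying $\|\nabla g(y^k)+\mu_k\nabla h(y^k)\|\to 0$ (Lemma~\ref{Lemma32}); only then does curve selection at infinity yield a curve along which a Lagrange-type condition holds, and one proves $\widetilde{K}_\infty(g)=\emptyset$ and $\widetilde{K}_\infty(g|_{\{h=r\}})=\emptyset$ for $0<|r|\ll 1$ and $|r|\gg 1$ (Lemmas~\ref{Lemma31-1} and~\ref{Lemma31-2}), where convenience of $g$ and the joint non-degeneracy of $(g,h)$ enter through a three-case comparison of the orders of $\nabla f_{1,\Delta_1}$ against $\lambda\nabla f_{2,\Delta_2}$. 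Your final passage to the exponents $\alpha,\beta$ is fine once bad sequences are excluded (the paper does this with the growth dichotomy applied to $\mu(t)=\sup_{|g(x)|=t}|h(x)|$), but without an Ekeland-type (or comparable variational) step producing asymptotic first-order conditions, the non-degeneracy hypothesis cannot be used at all. A smaller point: along a fixed semi-algebraic curve, $|h|/|g|^{\rho}\to\infty$ for \emph{every} $\rho>0$ forces $g\circ\gamma$ to stay bounded while $h\circ\gamma\to\infty$, so what you must really exclude are precisely the first- and second-type sequences of the paper --- and that exclusion is the hard part of the theorem, not a corollary of reading orders off the Newton polyhedra.
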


\begin{theorem}\label{Theorem12}
In the space of polynomial mappings from $\mathbb{R}^n$ to $\mathbb{R}^p$ $(n \ge p)$ with fixed Newton polyhedra, the set of polynomial mappings, which are non-degenerate at infinity, forms an open dense semi-algebraic subset.
\end{theorem}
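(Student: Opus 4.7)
Fix Newton polyhedra $\Gamma_1, \ldots, \Gamma_p$; writing $f_i = \sum_\alpha c_{i, \alpha} x^\alpha$ with $\alpha$ ranging over the lattice points of $\Gamma_i$, the space $\mathcal{P}$ of polynomial mappings $F = (f_1, \ldots, f_p) \colon \mathbb{R}^n \to \mathbb{R}^p$ with these Newton polyhedra is a finite-dimensional real vector space parametrised by the coefficients $c_{i, \alpha}$. Non-degeneracy at infinity is indexed by a finite collection of ``faces at infinity'' of the $\Gamma_i$: for each such face datum $\Delta = (\Delta_1, \ldots, \Delta_p)$ one associates a principal-part system $\{f_{i, \Delta_i}\}$ on the torus, and $F$ fails non-degeneracy along $\Delta$ precisely when this system has a common zero $x \in (\mathbb{R} \setminus \{0\})^n$ at which the Jacobian matrix drops rank. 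Denote by $B_\Delta \subset \mathcal{P}$ the corresponding degenerate locus; the plan is to show each $B_\Delta$ is a proper, closed, semi-algebraic subset, then use the finiteness of the indexing set.

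Semi-algebraicity of $B_\Delta$ is immediate from the Tarski--Seidenberg theorem since its defining condition is a first-order formula in the $c_{i, \alpha}$. Closedness is obtained via a compactification of the torus $(\mathbb{R} \setminus \{0\})^n$ adapted to the weight vector associated with $\Delta$ (e.g.\ a weighted-projective or toric compactification), on which the natural incidence variety is closed and projects properly onto $\mathcal{P}$. Since the number of face data is finite, the union $\bigcup_\Delta B_\Delta$ is closed and semi-algebraic, hence the complement (the non-degenerate locus) is open and semi-algebraic.

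The main obstacle is density, namely that each $B_\Delta$ is a \emph{proper} subset of $\mathcal{P}$. The approach is a Bertini-type transversality argument: the principal-part coefficients enter the equations $f_{i, \Delta_i} = 0$ linearly, so the incidence set $\{(F, x) \in \mathcal{P} \times (\mathbb{R}\setminus\{0\})^n : f_{i, \Delta_i}(x) = 0 \text{ for all } i\}$ is smooth of the expected codimension; by the semi-algebraic Sard theorem, for generic $F \in \mathcal{P}$ the fibre over $F$ is smooth, which rules out the prescribed rank defect. To descend to a bona fide real non-degenerate mapping one can, for instance, choose the coefficients $c_{i, \alpha}$ at the vertices of the $\Delta_i$ generically and set all other coefficients outside these vertices to zero; the principal-part system then reduces to a monomial-type system whose non-degeneracy can be verified by direct inspection. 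This exhibits a point of $\mathcal{P} \setminus B_\Delta$, so $B_\Delta$ has empty interior, and combining over the finitely many $\Delta$ yields the theorem. The delicate point in this last step, where I expect the bulk of the work to lie, is verifying that the real Bertini / specialisation argument is compatible with the reality constraints on the coefficients and with the possibly degenerate combinatorics of the face $\Delta$ (e.g.\ when $\Delta$ has low dimension or when several $\Delta_i$ share vertices).
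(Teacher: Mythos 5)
Your high-level plan (parametrize by coefficients, reduce to finitely many face data, Tarski--Seidenberg for semi-algebraicity, a parametric Sard/Bertini argument for density, finiteness to combine) matches the paper's architecture, and your density idea is essentially right --- indeed simpler than the paper's Proposition~\ref{Proposition42}, which applies Theorem~\ref{SardTheorem} to extended maps built from the weighted gradients and Lagrange-type multipliers, argues by induction on $p$, and needs a unimodular monomial change of coordinates when $\dim(\Gamma_1+\cdots+\Gamma_p)<n$. Since each coefficient $c_{i,\kappa}$ with $\kappa\in\Delta_i$ enters only $f_{i,\Delta_i}$ and its derivative is the nonvanishing monomial $x^\kappa$, the map $(c,x)\mapsto\bigl(f_{i,\Delta_i}(x,c_i)\bigr)_{i\in I}$ is a submersion on $\mathbb{R}^{\Pi}\times(\mathbb{R}^*)^n$, so Theorem~\ref{SardTheorem} gives transversality of the specialized maps for a set of coefficients containing an open dense semi-algebraic subset; that is exactly the rank condition. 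Three caveats, though: the inference ``the fibre is smooth, which rules out the rank defect'' is not valid as stated (a zero set can be a smooth manifold while the defining equations have dependent gradients --- you need the standard equivalence between regular values of the projection of the incidence variety and transversality of the specialized map); the finite family of conditions must range over all subtuples $I\subset\{1,\ldots,p\}$, since non-degeneracy at infinity is defined through all subcollections; and the closing ``set all non-vertex coefficients to zero and check non-degeneracy by inspection'' step is both unnecessary (the Sard argument already yields, for each condition, a set containing an open dense semi-algebraic subset, and a finite intersection suffices) and unsubstantiated, since fewnomial principal-part systems are not automatically non-degenerate.

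The genuine gap is in the openness half. You assert that each degenerate locus $B_\Delta$ is closed because the incidence variety ``projects properly onto'' the coefficient space after a toric or weighted compactification. The projection is not proper: the $x$-component of a degenerating sequence can escape to infinity or to the coordinate hyperplanes, and on the boundary strata of any such compactification the equations $f_{i,\Delta_i}=0$ together with the rank defect do not persist as stated --- they collapse onto the analogous conditions for the principal parts along a generally \emph{different} face tuple $\widetilde\Delta$. Hence a limit of coefficient vectors in $B_\Delta$ need not lie in $B_\Delta$; what is true is that it lies in $B_{\widetilde\Delta}$ for some $\widetilde\Delta\in\mathcal{F}$, so only the finite union of the bad loci (taken inside the open dense set of coefficients whose Newton polyhedra equal the prescribed $\Gamma_i$) can be shown closed. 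This cross-face analysis is precisely the content of the paper's Proposition~\ref{Proposition41}: one takes a point in the closure of the image, applies the Curve Selection Lemma (Lemma~\ref{CurveSelectionLemma1}) in the $(x,c)$-space, expands the curve to extract a weight vector $q$ and the limiting faces $\widetilde\Delta_i=\Delta(q,\Delta_i)$, and then proves by a separate combinatorial perturbation argument that $\widetilde\Delta\in\mathcal{F}$, so the limit point again lies in the union of incidence varieties. Your sketch hides exactly this work; moreover, since closedness of each individual $B_\Delta$ may fail, your final step ``each $B_\Delta$ is closed with empty interior, hence the finite union is closed with empty interior'' does not go through as written --- you would need each $B_\Delta$ nowhere dense, or, as in the paper, closedness of the union itself.
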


Note that unlike the case where $h$ is the distance function to the zero set of $g$ (see \cite{Dinh2017-2,Dinh2014-1,Dinh2016-1,Dinh2017-1,HaHV2015-1}), estimating the exponents $\alpha$ and $\beta$ in Theorem~\ref{Theorem11} is still a delicate problem.

The paper is organized as follows. Section~\ref{SectionPreliminary} presents some preliminary results from Semi-algebraic Geometry; the condition of non-degeneracy at infinity and the Ekeland variational principle will be also given there. Section~\ref{SectionLojasiewicz} proves the existence of the global \L ojasiewicz-type inequality for polynomial mappings which are non-degenerate at infinity.  Finally, in Section \ref{SectionGenericity}, it is shown that the property of being non-degenerate at infinity is generic.

\section{Preliminaries} \label{SectionPreliminary}

We begin by giving some necessary definitions and notational conventions. Let $\mathbb{R}^n$ denote the Euclidean space of dimension $n$  and $\mathbb{R}^*:=\mathbb{R}\backslash\{0\}$. The corresponding inner product (resp., norm) in $\mathbb{R}^n$  is defined by $\langle x, y \rangle$ for any $x, y \in
\mathbb{R}^n$ (resp., $\| x \| := \sqrt{\langle x, x \rangle}$ for any $x \in \mathbb{R}^n$). 
The closure, the convex hull and the cardinality of a set $A$ is denoted by $\overline{A}$, $\co(A)$ and $\# A$ respectively.
\subsection{Semi-algebraic geometry}

In this subsection, we recall some notions and results of semi-algebraic geometry, which can be found in \cite{Basu2006, Benedetti1990, Bierstone1988, Bochnak1998, Dries1996}.

\begin{definition}
\begin{enumerate}[{\rm (i)}]
  \item A subset of $\mathbb{R}^n$ is {\em semi-algebraic} if it is a finite union of sets of the form 
$$\{x \in \mathbb{R}^n \ : \ f_i(x) = 0, i = 1, \ldots, k; f_i(x) > 0, i = k + 1, \ldots, p\}$$
where all $f_{i}$ are polynomials.
 \item 
	 A mapping $F \colon A \to B$ is {\em semi-algebraic} if its graph 
$$\{(x, y) \in A \times B \ : \ y = F(x)\}$$
is a semi-algebraic subset of $\mathbb{R}^n\times\mathbb{R}^p.$
\end{enumerate}
\end{definition}

A major fact concerning the class of semi-algebraic sets is the following Tarski--Seidenberg Theorem.

\begin{theorem} \label{TarskiSeidenbergTheorem1}
The image of a semi-algebraic set by a semi-algebraic mapping is semi-algebraic.
\end{theorem}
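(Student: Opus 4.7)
The plan is to reduce the general statement to the following projection theorem: if $S \subseteq \mathbb{R}^{n+1}$ is semi-algebraic, then its image under the projection $\pi\colon \mathbb{R}^{n+1}\to \mathbb{R}^n$ that forgets the last coordinate is semi-algebraic. Granting this, the full theorem follows quickly: if $F\colon A\to B$ is semi-algebraic with $A\subseteq \mathbb{R}^n$ and $B\subseteq \mathbb{R}^p$, then $F(A)$ is the projection onto the last $p$ coordinates of the graph $\Gamma_F\subseteq \mathbb{R}^n\times \mathbb{R}^p$, which is semi-algebraic by assumption; iterating the one-variable projection theorem $n$ times eliminates $x_1,\ldots,x_n$ and shows that $F(A)$ is semi-algebraic.

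The core of the work is therefore the projection theorem, which I would attack by analyzing how the signs of a finite family $p_1,\ldots,p_k\in \mathbb{R}[x_1,\ldots,x_{n+1}]$ describing $S$ vary with $x_{n+1}$ when the first $n$ coordinates are fixed. Viewing each $p_j$ as a polynomial in $x_{n+1}$ with coefficients in $\mathbb{R}[x']$, where $x'=(x_1,\ldots,x_n)$, the real roots of $p_j(x',\cdot)$ and the signs of $p_j$ between consecutive roots are controlled by the coefficients through classical tools: discriminants and subresultants detect coincidences of roots and losses of degree, while Sturm-type sequences count real roots in prescribed intervals. Using these, I would partition $\mathbb{R}^n$ into finitely many semi-algebraic pieces on which the number of real roots of each $p_j$ is constant and each root can be tracked by a continuous semi-algebraic function of $x'$. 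This yields a cylindrical decomposition of $\mathbb{R}^{n+1}$ over that partition whose cells are semi-algebraic and on each of which the signs of $p_1,\ldots,p_k$ are constant.

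Once this decomposition is in place, $S$ is a finite union of such cylindrical cells, and the projection of each cell is exactly the corresponding base piece in $\mathbb{R}^n$, which is semi-algebraic by construction. Hence $\pi(S)$ is a finite union of semi-algebraic subsets of $\mathbb{R}^n$ and therefore semi-algebraic. The argument proceeds by induction on $n$, the base case $n=0$ reducing to the familiar fact that whether a univariate polynomial $p\in \mathbb{R}[x_{n+1}]$ has a real root satisfying a prescribed sign pattern with respect to finitely many other polynomials can be decided by a Boolean combination of polynomial inequalities on the coefficients.

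The main obstacle will be the uniform description of the real roots of the parametric polynomials across parameter space: although for each fixed $x'$ the root structure is transparent, controlling the discrete changes (coincident roots, drops in degree, real roots appearing or disappearing in pairs) in a semi-algebraic way requires careful bookkeeping with discriminants, subresultants, and Sturm sequences, and is essentially the content of Collins' cylindrical algebraic decomposition. Because this machinery is standard, and because complete treatments following exactly this blueprint appear in \cite{Basu2006, Bochnak1998}, I would refer the reader to those sources rather than reproduce the technical details here.
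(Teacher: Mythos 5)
Your outline is correct and is exactly the classical route: the paper itself offers no proof of this theorem, stating it as a known fact and citing the standard references \cite{Basu2006, Benedetti1990, Bochnak1998, Dries1996}, which prove it precisely as you sketch (reduce to projecting the graph one coordinate at a time, then eliminate a single variable by a semi-algebraic sign analysis of the parametric univariate polynomials via subresultants/Sturm sequences, i.e.\ cylindrical algebraic decomposition). So your proposal is consistent with, and no less complete than, the paper's treatment; the only caveat is that the real content lies in the one-variable elimination step you delegate to those sources.
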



The following well-known lemmas will be of great importance for us.

\begin{lemma}[Curve Selection Lemma] \label{CurveSelectionLemma1}
Let $A\subset \mathbb{R}^n$ be a semi-algebraic set and $x^* \in \mathbb{R}^n$ be a non-isolated point of $\overline{A}.$ 
Then there exists a non-constant analytic semi-algebraic mapping  $\varphi\colon (-\epsilon, \epsilon) \to {\mathbb R}^n$ with $\varphi(0) = x^*$ and with $\varphi(t) \in A$ for $t \in (0, \epsilon).$
\end{lemma}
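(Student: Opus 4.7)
The plan is to reduce the problem to a one-variable selection and then upgrade the resulting continuous semi-algebraic curve to an analytic one by means of a Puiseux expansion and a reparametrization.

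After translating I may assume $x^{*} = 0$. First I would consider the squared-distance function $\rho(x) = \|x\|^{2}$ restricted to $A$. Since $0$ is a non-isolated point of $\overline{A}$, the image $\rho(A)$ contains values arbitrarily close to $0$; by Theorem~\ref{TarskiSeidenbergTheorem1} it is a semi-algebraic subset of $\mathbb{R}$. Invoking the elementary structure theorem for semi-algebraic subsets of $\mathbb{R}$ (finite unions of points and open intervals), I conclude that $\rho(A)$ contains an interval $(0, \delta)$ for some $\delta > 0$.

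Next I would use definable choice (a standard consequence of the semi-algebraic cell decomposition theorem) to obtain a semi-algebraic map $s \colon (0, \delta) \to A$ with $\|s(r)\|^{2} = r$ for each $r$; in particular $s(r) \to 0$ as $r \to 0^{+}$. Each coordinate $s_{i}$ is then a bounded semi-algebraic function on $(0, \delta)$. Applying the Puiseux theorem for one-variable semi-algebraic functions, there exist a positive integer $N_{i}$ and a convergent real power series $P_{i}$ with $P_{i}(0) = 0$ such that $s_{i}(r) = P_{i}(r^{1/N_{i}})$ on some right neighborhood of $0$. Taking $N$ to be a common multiple of the $N_{i}$ and substituting $r = t^{N}$, the map
\[
\varphi(t) := s(t^{N}) = \bigl(P_{1}(t^{N/N_{1}}), \ldots, P_{n}(t^{N/N_{n}})\bigr)
\]
extends to a real analytic semi-algebraic map on some interval $(-\epsilon, \epsilon)$; by construction $\varphi(0) = 0$ and $\varphi(t) \in A$ for $t \in (0, \epsilon)$, and $\varphi$ is non-constant because $\|\varphi(t)\|^{2} = t^{N}$.

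The delicate point is the Puiseux expansion step: a semi-algebraic selection alone only produces a continuous semi-algebraic arc, and passing to analyticity requires knowing that a bounded semi-algebraic function of one real variable admits, near an endpoint, a convergent fractional power series. This is the only nontrivial ingredient; everything else (translation, reduction via $\rho$, definable choice, and the reparametrization $r \mapsto t^{N}$) is bookkeeping that organizes the analyticity provided by Puiseux into a vector-valued analytic arc.
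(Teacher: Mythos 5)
Your argument is sound, but note that the paper does not actually prove this lemma: it is quoted as a well-known result, with the standard references (Bochnak--Coste--Roy, Basu--Pollack--Roy, etc.) given at the head of the subsection, so there is no in-paper proof to compare against. Your route --- reduce to the distance function $\rho$, extract a semi-algebraic section over $(0,\delta)$ by definable choice, apply the one-variable Puiseux theorem coordinatewise, and reparametrize by $r=t^N$ --- is one of the standard textbook proofs, and it is correct in substance. Three small points would tighten it. First, ``$x^*$ non-isolated in $\overline{A}$'' should be converted explicitly into ``points of $A\setminus\{x^*\}$ accumulate at $x^*$'' (approximate a nearby point of $\overline{A}\setminus\{x^*\}$ by points of $A$); this is what guarantees that $\rho(A)$ contains arbitrarily small \emph{positive} values, which your phrase ``values arbitrarily close to $0$'' glosses over. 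Second, definable choice gives a section $s$ that is a priori only semi-algebraic, not continuous; since a one-variable semi-algebraic function is analytic off a finite set, shrink $\delta$ before invoking Puiseux (this is implicit in the Puiseux step but worth a word). Third, you assert that the analytic extension of $\varphi$ to $(-\epsilon,\epsilon)$ is semi-algebraic; to justify this, take a nonzero polynomial $Q_i$ vanishing on the graph of $s_i$, note $Q_i(t^N,\varphi_i(t))\equiv 0$ for small $t>0$ and hence for all small $t$ by analytic continuation, so each $\varphi_i$ is a Nash function and in particular semi-algebraic. None of these are genuine gaps. As you observe, the only non-elementary ingredient is the Puiseux theorem for one-variable semi-algebraic germs (the paper's Growth Dichotomy Lemma is exactly its leading-order consequence); since that theorem is established independently of curve selection, there is no circularity.
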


\begin{lemma}[Curve Selection Lemma at infinity]\label{CurveSelectionLemma}
Let $A\subset \mathbb{R}^n$ be a semi-algebraic set, and let $f :=
(f_1, \ldots,f_p) \colon  \mathbb{R}^n \to \mathbb{R}^p$ be a
semi-algebraic mapping. Assume that there exists a sequence $\{x^\ell\}$
such that $x^\ell \in A$, $\lim_{l \to \infty} \| x^\ell  \| = \infty$
and $\lim_{l \to \infty} f(x^\ell)  = y \in(\overline{\mathbb{R}})^p,$
where $\overline{\mathbb{R}} := \mathbb{R} \cup \{\pm \infty\}.$ Then
there exists an analytic semi-algebraic mapping $\varphi \colon (0,
\epsilon)\to \mathbb{R}^n$ such that $\varphi(t) \in A$ for all $t \in
(0, \epsilon), \lim_{t \to 0} \|\varphi(t)\| = \infty,$ and $\lim_{t
\to 0} f(\varphi(t)) = y.$
\end{lemma}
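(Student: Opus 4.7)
The plan is to reduce to the ordinary Curve Selection Lemma (Lemma~\ref{CurveSelectionLemma1}) via semi-algebraic homeomorphisms that compactify both the domain (sending infinity to the origin) and the target (sending the possibly $\pm\infty$ components of $y$ to $\pm 1$).

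First, I would introduce the inversion $\iota\colon \mathbb{R}^n\setminus\{0\} \to \mathbb{R}^n\setminus\{0\}$, $\iota(x) := x/\|x\|^2$, which is an analytic semi-algebraic diffeomorphism exchanging a punctured neighborhood of the origin and a neighborhood of infinity, together with the coordinatewise bounded transform $\theta_p\colon \mathbb{R}^p \to (-1,1)^p$ given by $\theta_p(u_1,\ldots,u_p) := (u_1/(1+|u_1|),\ldots,u_p/(1+|u_p|))$, which is a semi-algebraic homeomorphism extending continuously to a homeomorphism $(\overline{\mathbb{R}})^p \to [-1,1]^p$. Put $\bar{y} := \theta_p(y) \in [-1,1]^p$. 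By Tarski--Seidenberg (Theorem~\ref{TarskiSeidenbergTheorem1}), the graph
$$\Gamma := \{(z,\theta_p(f(\iota^{-1}(z)))) \ : \ z \in \iota(A\setminus\{0\})\} \ \subset \ (\mathbb{R}^n\setminus\{0\})\times (-1,1)^p$$
is a semi-algebraic subset of $\mathbb{R}^{n+p}$.

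Setting $z^\ell := \iota(x^\ell)$, the hypotheses translate to $z^\ell \to 0$ and $\theta_p(f(x^\ell)) \to \bar{y}$, so $(0,\bar{y}) \in \mathbb{R}^{n+p}$ is a non-isolated point of $\overline{\Gamma}$. Applying Lemma~\ref{CurveSelectionLemma1} to $\Gamma$ at $(0,\bar{y})$ produces a non-constant analytic semi-algebraic map $\gamma\colon (-\epsilon,\epsilon) \to \mathbb{R}^{n+p}$ with $\gamma(0) = (0,\bar{y})$ and $\gamma(t) \in \Gamma$ for $t \in (0,\epsilon)$. Writing $\gamma(t) = (\tilde{\varphi}(t),\tilde{y}(t))$, the inclusion $\gamma(t) \in \Gamma$ forces $\tilde{\varphi}(t) \neq 0$ for every $t \in (0,\epsilon)$. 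Hence $\varphi(t) := \iota^{-1}(\tilde{\varphi}(t)) = \tilde{\varphi}(t)/\|\tilde{\varphi}(t)\|^2$ is analytic semi-algebraic on $(0,\epsilon)$, takes values in $A$, satisfies $\|\varphi(t)\| \to \infty$ as $\tilde{\varphi}(t) \to 0$, and by construction $f(\varphi(t)) = \theta_p^{-1}(\tilde{y}(t)) \to \theta_p^{-1}(\bar{y}) = y$ as $t \to 0^+$.

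The only delicate issue is that $\iota^{-1}$ is singular at $0$, so one must rule out $\tilde{\varphi}(t) = 0$ on $(0,\epsilon)$; this is automatic because $\Gamma$ already avoids $\{0\}\times\mathbb{R}^p$. By comparison, handling the possibly infinite components of $y$ is mechanical once $\theta_p$ is in place, and semi-algebraicity of all intermediate constructions (image, graph, inverse) is preserved throughout by Tarski--Seidenberg.
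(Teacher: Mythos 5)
Your proposal is correct. Note, however, that the paper itself offers no proof of this lemma: it is listed among the ``well-known lemmas'' and implicitly referred to the standard semi-algebraic geometry literature, so there is no argument of the authors to compare against. Your reduction --- inverting the domain by $\iota(x)=x/\|x\|^{2}$ to turn infinity into the origin, compressing the target by $\theta_p$ so that infinite components of $y$ become the boundary values $\pm 1$, and then applying the local Curve Selection Lemma (Lemma~\ref{CurveSelectionLemma1}) to the graph $\Gamma$ at the non-isolated point $(0,\bar y)$ --- is exactly the standard way this lemma is established, and you address the genuinely delicate points: $(0,\bar y)$ is non-isolated in $\overline{\Gamma}$ because the points $(z^{\ell},\theta_p(f(x^{\ell})))$ have nonzero first coordinate; $\tilde\varphi(t)\neq 0$ on $(0,\epsilon)$ since $\Gamma$ avoids $\{0\}\times\mathbb{R}^p$, so $\varphi=\iota\circ\tilde\varphi$ is analytic and semi-algebraic there; and the limit $f(\varphi(t))\to y$, including infinite coordinates, follows from continuity of the inverse of the extended homeomorphism $[-1,1]^p\to(\overline{\mathbb{R}})^p$. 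The only cosmetic point is that $z^{\ell}=\iota(x^{\ell})$ is defined only for $\ell$ large enough that $x^{\ell}\neq 0$, which is harmless since $\|x^{\ell}\|\to\infty$.
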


\begin{lemma}[Growth Dichotomy Lemma] \label{GrowthDichotomyLemma}
Let $f \colon (0, \epsilon) \rightarrow \mathbb{R}$ be a semi-algebraic function with $f(t) \ne 0$ for all $t \in (0, \epsilon).$ Then
there exist constants $c \ne 0$ and $q \in \mathbb{Q}$ such that $f(t) = ct^q + o(t^q)$ as $t \to 0^+.$
\end{lemma}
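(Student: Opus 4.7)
The plan is to reduce the statement to the classical Newton--Puiseux theorem. Since $f \colon (0, \epsilon) \to \mathbb{R}$ is semi-algebraic, its graph
\[
\Gamma := \{(t, f(t)) : t \in (0, \epsilon)\}
\]
is a semi-algebraic subset of $\mathbb{R}^2$ of dimension one, so its Zariski closure is a proper algebraic subset of $\mathbb{R}^2$. Consequently there exists a nonzero polynomial $P(t, y) \in \mathbb{R}[t, y]$ with $P(t, f(t)) = 0$ for every $t \in (0, \epsilon)$.

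Next I would invoke the semi-algebraic monotonicity lemma, a standard consequence of the cell decomposition theorem from the references already cited in the preliminaries: after shrinking $\epsilon$, we may assume that $f$ is real-analytic and strictly monotone on $(0, \epsilon)$. The Newton--Puiseux theorem applied to $P(t, y) = 0$ then produces finitely many convergent Puiseux branches of the form
\[
y = \sum_{k \ge k_0} c_k \, t^{k/N},
\]
defined on a punctured neighborhood of $t = 0$, with a common positive integer denominator $N$ and some $k_0 \in \mathbb{Z}$. By the identity principle for analytic functions, $f$ must coincide with exactly one such branch on a subinterval $(0, \epsilon') \subset (0, \epsilon)$; since $f$ is real-valued and nowhere zero, the matched branch carries real coefficients and has a well-defined nonzero leading term. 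Taking $k_0$ to be the smallest index with $c_{k_0} \ne 0$ and setting $q := k_0/N \in \mathbb{Q}$, $c := c_{k_0} \ne 0$, we immediately obtain $f(t) = c t^q + o(t^q)$ as $t \to 0^+$.

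The main hurdle is isolating a single real Puiseux branch associated with $f$: the general Newton--Puiseux theorem delivers complex branches, and one must argue that the branch matched by the real function $f$ has real coefficients. I would handle this by pairing complex conjugate branches (their sum contributes a real Puiseux series whose leading exponent still lies in $\tfrac{1}{N}\mathbb{Z}$) or, more cleanly, by appealing to the real-analytic version of Puiseux expansion that is implicit in the cell decomposition of semi-algebraic subsets of $\mathbb{R}^2$ available from the sources listed in Section~\ref{SectionPreliminary}.
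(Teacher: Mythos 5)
The paper itself gives no proof of Lemma~\ref{GrowthDichotomyLemma}: it is listed among the ``well-known lemmas'' whose proofs are deferred to the standard references recalled at the start of Section~\ref{SectionPreliminary} (e.g.\ \cite{Basu2006, Bochnak1998, Dries1996}), so there is no in-paper argument to compare yours with. Your proposal is in fact the standard textbook route (algebraic dependence of the one-dimensional graph plus Newton--Puiseux), and it is essentially correct, but three points need tightening. First, the monotonicity theorem gives that on a subinterval $f$ is analytic and either constant or strictly monotone; the constant case is trivial ($q=0$, $c=f$), but as written you exclude it, and strict monotonicity is never actually used---only analyticity matters, for the identity principle. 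Second, ``by the identity principle $f$ must coincide with exactly one branch'' hides the pigeonhole step: for each $t$ the value $f(t)$ is one of the finitely many branch values, so some fixed branch $y_j$ agrees with $f$ on an infinite subset of a compact subinterval, hence on a set with an accumulation point, and only then does the identity principle, applied to the analytic function $s\mapsto f(s^N)-y_j(s^N)$ of $s=t^{1/N}$, yield coincidence on the whole interval. Third, the ``main hurdle'' you isolate is easier than you suggest, and your first remedy (summing conjugate branches) is not the right mechanism---the sum of two conjugate roots of $P(t,\cdot)$ is in general not a root, let alone equal to $f$. The correct observation is simply that the branch $y_j$ you have matched is real-valued on an interval of real $t>0$, hence coincides as a function, and therefore as a Puiseux series, with its coefficientwise conjugate, so its coefficients are real; it is not the zero series because $f$ is nowhere zero, so its leading term $c\,t^{q}$ with $c\neq 0$, $q\in\mathbb{Q}$ exists. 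With these routine repairs the argument is complete, and your allowance of a negative $k_0$ correctly covers the case where $f$ is unbounded as $t\to 0^+$.
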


\subsection{The (semi-algebraic) transversality theorem with parameters}
Let $P,X$ and $Y$ be $C^\infty$ manifolds of finite dimension, $S$ be a $C^\infty$ sub-manifold of $Y$, and $F \colon X\rightarrow Y$ be a $C^\infty$ mapping. Denote by $d_xF:T_xX\to T_{F(x)}Y$, the differential of $F$ at $x,$ where $T_xX$ and $T_{F(x)}Y$ are, respectively, the tangent space of $X$ at $x$ and the tangent space of $Y$ at $F(x)$. 

\begin{definition}
The mapping $F$ is {\em transverse} to the sub-manifold $S,$ abbreviated by $F \pitchfork S,$ if either $F(X)\cap S=\emptyset$ or for each $x\in F^{-1}(S),$ we have
$$d_xF(T_x X)+T_{F(x)}S=T_{F(x)}Y.$$
\end{definition}

\begin{remark}
If $\dim X \ge \dim Y$ and $S=\{s\}$, then $F\pitchfork S$ if and only if either $F^{-1}(s)=\emptyset$ or $\mathrm{rank} d_xF = \dim Y$ for all $x \in F^{-1}(s).$ In the case $\dim X < \dim Y$, then $F\pitchfork S$ if and only if $F^{-1}(S)=\emptyset$.
\end{remark}

The following result will be useful in the study of the genericity of the condition of non-degeneracy at infinity.
\begin{theorem} [Transversality Theorem with parameters] \label{SardTheorem}
Let $F \colon P\times X\rightarrow Y$ be a $C^\infty$ semi-algebraic mapping. For each $p \in P,$ consider the mapping $F_p \colon X\rightarrow Y$ defined by $F_p(x) := F(p, x).$ If $F\pitchfork S,$ then the set
$$Q := \{p\in P \ : \  F_p\pitchfork S\}$$
contains an open dense semi-algebraic subset of $P.$
\end{theorem}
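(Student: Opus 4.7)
The plan is to reduce the statement to the classical Sard theorem applied to the projection $\pi_P\colon W\to P$, where $W:=F^{-1}(S)$. The argument has three natural steps: (i) the transversality hypothesis $F\pitchfork S$ ensures $W$ is a smooth semi-algebraic submanifold of $P\times X$; (ii) a pointwise linear-algebraic computation identifies the regular values of $\pi_P|_W$ with the set $Q$; (iii) Sard's theorem, combined with semi-algebraicity and a closure/dimension argument, yields an open dense semi-algebraic subset of $P$ contained in $Q$.

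For step (i), I would work in local charts on $Y$ in which $S$ is the zero set of a submersion $\psi\colon Y\to\mathbb{R}^k$ with $k=\dim Y-\dim S$. The condition $F\pitchfork S$ translates to $\psi\circ F$ being a submersion along $W=(\psi\circ F)^{-1}(0)$, and the implicit function theorem then exhibits $W$ as a $C^\infty$ submanifold of $P\times X$ of codimension $k$. Semi-algebraicity of $W$ follows from that of $F$ and $S$.

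For step (ii), fix $(p,x)\in W$ and use the decomposition $T_{(p,x)}(P\times X)=T_pP\oplus T_xX$. Then $T_{(p,x)}W$ consists of pairs $(v,u)$ with $d_{(p,x)}F(v,u)\in T_{F(p,x)}S$, and $(p,x)$ is a regular point of $\pi_P|_W$ precisely when every $v\in T_pP$ admits some $u\in T_xX$ with $(v,u)\in T_{(p,x)}W$. A short computation establishes equivalence with $F_p\pitchfork S$ at $x$: given $w\in T_{F_p(x)}Y$, use $F\pitchfork S$ to write $w=d_{(p,x)}F(v,u)+s$ for some $(v,u)\in T_pP\oplus T_xX$ and $s\in T_{F_p(x)}S$; then choose a companion $u'\in T_xX$ with $(v,u')\in T_{(p,x)}W$, which yields $w=d_xF_p(u-u')+\bigl(s+d_{(p,x)}F(v,u')\bigr)$, the second summand lying in $T_{F_p(x)}S$. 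The case $F_p^{-1}(S)=\emptyset$ is handled separately: then $p\notin\pi_P(W)$, $F_p\pitchfork S$ holds vacuously, and $p$ is automatically a regular value.

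For step (iii), let $\Sigma\subset P$ denote the set of critical values of $\pi_P|_W$, so that $Q=P\setminus\Sigma$ by step (ii). Since $\pi_P|_W$ is a $C^\infty$ semi-algebraic map, Sard's theorem gives that $\Sigma$ has Lebesgue measure zero, and semi-algebraicity of $\Sigma$ (as the image of a semi-algebraic set under a semi-algebraic map) upgrades this to $\dim\Sigma<\dim P$. Because a semi-algebraic set and its closure share the same dimension, $\dim\overline\Sigma<\dim P$ as well, so $P\setminus\overline\Sigma$ is an open dense semi-algebraic subset of $P$ contained in $Q$. The main obstacle I anticipate is the bookkeeping in step (ii), where one must combine the decomposition provided by $F\pitchfork S$ with a lift of an arbitrary $v\in T_pP$ to a tangent vector of $W$; the remaining steps are essentially formal.
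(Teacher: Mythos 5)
Your proposal is correct, but it takes a more self-contained route than the paper. The paper's proof is essentially a citation argument: density of $Q$ is quoted from the classical parametric transversality theorem (Goresky--MacPherson, Guillemin--Pollack), semi-algebraicity of $Q$ is obtained directly from the Tarski--Seidenberg theorem, and the conclusion follows from the general fact that a dense semi-algebraic subset of $P$ contains an open dense semi-algebraic subset. You instead unfold the classical proof: $W:=F^{-1}(S)$ is a submanifold by transversality, the pointwise computation identifying regular points of $\pi_P|_W$ over $p$ with transversality of $F_p$ at the corresponding $x$ is exactly the Guillemin--Pollack argument (and for the conclusion you only need the inclusion $P\setminus\Sigma\subset Q$, which is the direction you verified), and Sard's theorem controls the critical values $\Sigma$. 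Your semi-algebraic upgrade is also slightly different: rather than showing $Q$ itself is semi-algebraic, you show $\Sigma$ is semi-algebraic of measure zero, hence $\dim\Sigma<\dim P$, and pass to $P\setminus\overline{\Sigma}$; this buys an explicit open dense set and avoids quoting the "dense semi-algebraic contains open dense semi-algebraic" lemma, at the cost of having to justify that the critical locus of $\pi_P|_W$ is semi-algebraic --- which does hold, since for a $C^\infty$ semi-algebraic submanifold $W$ the tangent space $T_{(p,x)}W$ depends semi-algebraically on the point, but this is the place where Tarski--Seidenberg enters your version and deserves an explicit sentence. Note also that both your argument and the paper's implicitly assume that $P,X,Y$ and $S$ are semi-algebraic (otherwise neither $Q$ nor $\Sigma$ need be semi-algebraic); this is harmless here, since in all applications in the paper $S=\{0\}$ and the manifolds are open semi-algebraic subsets of Euclidean spaces.
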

\begin{proof} 
It is well-known that the set $Q$ is dense in $P$ (see, for example, \cite[Theorem~1.3.6]{Goresky1988} or \cite[The Transversality Theorem, page 68]{Guillemin1974}).
On the other hand, $Q$ is semi-algebraic (by Theorem~\ref{TarskiSeidenbergTheorem1}). Since every dense semi-algebraic set in $P$ contains an open dense semi-algebraic subset of $P,$ the desired statement follows.
\end{proof}

\subsection{Newton polyhedra and non-degeneracy conditions}
Given a nonempty set $J \subset \{1, \ldots, n\},$ we define
$$\mathbb{R}^J := \{x \in \mathbb{R}^n \ : \ x_j = 0, \textrm{ for all } j \not \in J\}.$$
We denote by $\mathbb{Z}_+$ and $\mathbb{R}_+$, respectively, the set of non-negative integers and the set of non-negative real numbers. If $\kappa = (\kappa_1, \ldots, \kappa_n) \in \mathbb{Z}_+^n,$ we denote by $x^\kappa$ the monomial $x_1^{\kappa_1} \cdots x_n^{\kappa_n}.$
Denote by $\{e^1, \ldots, e^n\}$ the canonical basis of $\mathbb{R}^n.$

\subsubsection{Newton polyhedra}
A subset $\Gamma \subset {\mathbb R}^n_+$ is a {\em Newton polyhedron} if there exists a finite subset $S \subset {\mathbb Z}^n_+$  such that $\Gamma$ is the convex hull in ${\mathbb R}^n$ of $S.$ We say that $\Gamma$ is the {\em Newton polyhedron determined by $S$} and write $\Gamma = \Gamma(S).$ 
A Newton polyhedron $\Gamma$ is {\em convenient} if it intersects each
coordinate axis at a point different from the origin $0$ in
$\mathbb{R}^n,$ that is, if for any $j \in \{1, \ldots, n\}$ there
exists some $\kappa_j > 0$ such that $\kappa_j e^j \in \Gamma.$

Given a Newton polyhedron $\Gamma$ and a vector $q \in {\mathbb R}^n,$ we define
\begin{equation*}
\begin{aligned}
d(q, \Gamma) :=& \min \{\langle q, \kappa \rangle \ : \ \kappa \in \Gamma\}, \\
\Delta(q, \Gamma) :=& \{\kappa \in \Gamma \ : \ \langle q, \kappa \rangle = d(q, \Gamma) \}.
\end{aligned}
\end{equation*}
By definition, for each nonzero vector $q \in \mathbb{R}^n,$
$\Delta(q, \Gamma) $ is a closed face of $\Gamma.$ Conversely,
if $\Delta$ is a closed face of $\Gamma$, then there exists a
nonzero vector $q\in \mathbb{R}^n$ such that $\Delta = \Delta(q, \Gamma),$ where we
can in fact assume that $q\in\mathbb{Q}^n$ since $\Gamma$ is an integer polyhedron. 
The {\em dimension} of a face $\Delta$ is the minimum of the
dimensions of the affine subspaces containing $\Delta.$ 
The faces of $\Gamma$ of dimension $0$ are the {\em vertices} of $\Gamma.$

Let $f \colon \mathbb{R}^n \to \mathbb{R}$ be a polynomial function. Suppose that $f$ is written as $f = \sum_{\kappa} c_\kappa x^\kappa.$ The {\em support} of $f,$ denoted by $\mathrm{supp}(f),$ is the set of $\kappa  \in \mathbb{Z}_+^n$ such that $c_\kappa \ne 0.$ 
The {\em Newton polyhedron (at infinity)} of $f$, denoted by $\Gamma(f),$ is the convex hull in $\mathbb{R}^n$ of the set $\mathrm{supp}(f),$ i.e.,  $\Gamma(f)=\Gamma(\mathrm{supp}(f)).$
The polynomial $f$ is {\em convenient} if $\Gamma(f)$ is convenient.
For each (closed) face $\Delta$ of $\Gamma(f),$  we will denote 
$$f_\Delta(x):=\sum_{\kappa \in \Delta} c_\kappa x^\kappa.$$

\begin{remark}
The following statements follow immediately from definitions:

(i) We have $\Gamma(f) \cap \mathbb{R}^J = \Gamma(f|_{\mathbb{R}^J})$  for all nonempty subset $J$ of $\{1, \ldots, n\}.$

(ii) Let $\Delta := \Delta(q, \Gamma(f))$ for some nonzero vector $q
:= (q_1, \ldots, q_n) \in \mathbb{R}^n.$ By definition,
$f_\Delta(x)$ is a weighted homogeneous polynomial of type $(q, d := d(q, \Gamma(f))),$ i.e., we have for all $t > 0$ and all $x \in \mathbb{R}^n,$
$$f_\Delta(t^{q_1} x_1,  \ldots, t^{q_n} x_n) = t^d f_\Delta(x_1, \ldots, x_n).$$
This implies the Euler relation
\begin{equation*}
\sum_{j = 1}^n q_j x_j \frac{\partial f_\Delta}{\partial x_j}(x) = d \cdot f_\Delta(x).
\end{equation*}
In particular, if $d \ne 0$ and $\nabla f_\Delta(x) = 0,$ then $f_\Delta(x) = 0.$
\end{remark}

\subsubsection{Non-degeneracy conditions}
In \cite{Khovanskii1978} (see also \cite{Kouchnirenko1976}),
Khovanskii introduced a condition of non-degeneracy for complex
analytic mappings $F \colon ({\mathbb C}^n, 0) \rightarrow ({\mathbb C}^p,
0)$ in terms of the Newton polyhedra of the component functions of
$F.$ This notion has been applied extensively to the study of isolated
complete intersection singularities (see for instance \cite{Bivia2007,
Damon1989, Gaffney1992, Oka1997}). We will use this condition for real
polynomial mappings. First we need to introduce some notation.

\begin{definition}
Let $F := (f_1, \ldots, f_p) \colon {\mathbb R}^n \rightarrow {\mathbb R}^p, 1 \le p \le n,$ be a polynomial mapping. 
\begin{enumerate}[{\rm (i)}]
\item The mapping $F$ is {\em Khovanskii non-degenerate at infinity} if for any vector $q \in \mathbb{R}^n$ with $d(q, \Gamma(f_i)) < 0$ for $i = 1, \ldots, p,$ 
the system of gradient vectors $\nabla f_{i, \Delta_i}(x)$, for $i = 1, \ldots, p$, is $\mathbb{R}$-linearly independent on the set
	\begin{eqnarray*}
\{x \in (\mathbb{R}^*)^{n} \ : \ f_{i, \Delta_i} (x) = 0 \textrm{ for } i = 1, \ldots, p\},
\end{eqnarray*}
where $\Delta_i := \Delta(q, \Gamma(f_i)).$


\item The mapping $F$ is {\em non-degenerate at infinity} if for each $k$-tuple $(i_1, \ldots, i_k)$ of integers with $1 \le i_1 < \cdots < i_k \le p,$ the polynomial mapping 
	$${\mathbb R}^n \rightarrow {\mathbb R}^k,\ x \mapsto (f_{i_1}(x), \ldots,	f_{i_k}(x)),$$ 
	is Khovanskii non-degenerate at infinity.
\end{enumerate}
\end{definition}

\begin{remark}
By definition, the mapping $F$ is Khovanskii non-degenerate at infinity if and only
if for all $q \in \mathbb{R}^n$ with $d(q, \Gamma(f_i)) < 0$ for $i = 1, \ldots, p$ and for all $x \in ({\mathbb R}^*)^n$ with $f_{i, \Delta_i} (x) = 0$ for $i = 1, \ldots, p$ we have
$$\mathrm{rank} 
\begin{pmatrix}
x_1\frac{\partial f_{1, \Delta_1}}{\partial x_1}(x) & \cdots & x_n\frac{\partial f_{1, \Delta_1}}{\partial x_n}(x) \\
\vdots & \cdots & \vdots \\
x_1\frac{\partial f_{p, \Delta_p}}{\partial x_1}(x) & \cdots & x_n\frac{\partial f_{p, \Delta_p}}{\partial x_n}(x)
\end{pmatrix} = p.$$
The mapping $F$ is non-degenerate at infinity if and only
if for all $q \in \mathbb{R}^n$ with $d(q, \Gamma(f_i)) < 0$ for $i =
1, \ldots, p,$ and for all $x \in ({\mathbb R}^*)^n,$ we have
$$\mathrm{rank} 
\begin{pmatrix}
x_1\frac{\partial f_{1, \Delta_1}}{\partial x_1}(x) & \cdots &
x_n\frac{\partial f_{1, \Delta_1}}{\partial x_n}(x) & f_{1,\Delta_1}(x) &   & \textbf{\LARGE 0} \\
\vdots & \cdots & \vdots & & \ddots & \\
x_1\frac{\partial f_{p, \Delta_p}}{\partial x_1}(x) & \cdots &
x_n\frac{\partial f_{p, \Delta_p}}{\partial x_n}(x) & \textbf{\LARGE
0} &  & f_{p,\Delta_p}(x)
\end{pmatrix} = p.$$
\end{remark}

\subsection{Ekeland's variational principle}
We recall the Ekeland variational principle which is important for our arguments in the following.

\begin{theorem}\cite[Ekeland's Variational Principle]{Ekeland1974}\label{th::ekeland}
Let $X \subset \mathbb{R}^n$ be a closed set, and $f \colon X \rightarrow \mathbb{R}$ be a continuous function$,$ bounded from below. Let $\epsilon>0$ and $x^{0} \in X$ be such that
\[
		\inf_{x \in X} f(x)\le f(x^{0}) \le \inf_{x \in X} f(x)+\epsilon.
\]
Then for any $\lambda>0,$ there exists some point $y^{0} \in X$
such that 
\begin{eqnarray*}
			&& f(y^{0})\le f(x^{0}),\\
			&& \Vert y^{0}-x^{0}\Vert\le\lambda,\\
			&& f(y^{0}) \le f(x) + \frac{\epsilon}{\lambda}\Vert x-y^{0}\Vert \quad \text{ for all }\ x \in X.
\end{eqnarray*}
\end{theorem}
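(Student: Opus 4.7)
The plan is to define the perturbed function
\[
g(x) \,:=\, f(x) + \tfrac{\epsilon}{\lambda}\,\|x - x^{0}\|
\]
on $X$ and to take $y^{0}$ to be any global minimizer of $g$; the three claimed properties of $y^{0}$ will then fall out of comparing $g(y^{0})$ with $g(x^{0})$ and with $g(x)$ for general $x\in X$. First I would establish existence of such a minimizer. Since $f$ is bounded below by $m:=\inf_{X}f\in\mathbb{R}$, we have $g(x)\ge m+\tfrac{\epsilon}{\lambda}\|x-x^{0}\|$, so $g$ is coercive on $X$. Because $X$ is closed in $\mathbb{R}^{n}$, every sublevel set $\{x\in X : g(x)\le c\}$ is closed and bounded, hence compact, and continuity of $g$ then yields a global minimizer $y^{0}\in X$.

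Next, I would read off the three conclusions from minimality. The inequality $g(y^{0})\le g(x^{0})=f(x^{0})$ gives
\[
f(y^{0}) + \tfrac{\epsilon}{\lambda}\|y^{0}-x^{0}\| \;\le\; f(x^{0}),
\]
from which $f(y^{0})\le f(x^{0})$ is immediate. Combined with $f(y^{0})\ge m\ge f(x^{0})-\epsilon$ (the latter coming from the hypothesis $f(x^{0})\le m+\epsilon$), the same line yields $\tfrac{\epsilon}{\lambda}\|y^{0}-x^{0}\|\le\epsilon$, i.e.\ $\|y^{0}-x^{0}\|\le\lambda$. For the slope-type inequality, the minimality $g(y^{0})\le g(x)$ for arbitrary $x\in X$ reads
\[
f(y^{0}) + \tfrac{\epsilon}{\lambda}\|y^{0}-x^{0}\| \;\le\; f(x) + \tfrac{\epsilon}{\lambda}\|x-x^{0}\|,
\]
and the triangle inequality $\|x-x^{0}\|\le\|x-y^{0}\|+\|y^{0}-x^{0}\|$ applied to the right-hand side cancels the $\|y^{0}-x^{0}\|$ terms, leaving $f(y^{0})\le f(x)+\tfrac{\epsilon}{\lambda}\|x-y^{0}\|$, as required.

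The only non-algebraic ingredient is the existence of a minimizer of $g$, and this is the main point where the finite-dimensional setting buys us simplicity: on a general complete metric space the perturbation $g$ need not attain its infimum, which is why the classical proof of Ekeland's principle invokes either a Zorn-type maximality argument for the partial order $x\preceq y \iff f(y)+\tfrac{\epsilon}{\lambda}\|y-x\|\le f(x)$, or an iterative selection producing a Cauchy sequence whose limit is the sought-for $y^{0}$. Here, with $X$ closed in $\mathbb{R}^{n}$ and $g$ coercive, the standard Weierstrass argument bypasses that complication entirely, so the whole proof reduces to the three short computations above.
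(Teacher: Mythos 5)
Your proof is correct, and all three inequalities do follow exactly as you compute them from minimality of the perturbed function $g(x)=f(x)+\frac{\epsilon}{\lambda}\|x-x^{0}\|$: the existence of a global minimizer is legitimate because $g\ge \inf_X f+\frac{\epsilon}{\lambda}\|x-x^{0}\|$ makes the sublevel set $\{x\in X: g(x)\le g(x^{0})\}$ nonempty, closed and bounded, hence compact, and the comparison $g(y^{0})\le g(x^{0})$ together with $f(y^{0})\ge \inf_X f\ge f(x^{0})-\epsilon$ gives both $f(y^{0})\le f(x^{0})$ and $\|y^{0}-x^{0}\|\le\lambda$, while the triangle inequality turns $g(y^{0})\le g(x)$ into the slope estimate. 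Note that the paper itself gives no proof of this statement: it is quoted with a citation to Ekeland's original article, whose argument is designed for a complete metric space and proceeds by the iterative selection (equivalently, the Bishop--Phelps-type partial order) that you mention, precisely because there the perturbed functional need not attain its infimum. Your Weierstrass shortcut is therefore a genuinely different and more elementary route that exploits the finite-dimensional, closed-subset setting actually used in the paper; what it gives up is generality (no infinite-dimensional or metric-space version) and the sharper classical conclusions (e.g.\ strict inequality $f(y^{0})<f(x)+\frac{\epsilon}{\lambda}\|x-y^{0}\|$ for $x\ne y^{0}$), neither of which is needed for the applications in Lemma~\ref{Lemma32}.
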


\section{\L ojasiewicz inequalities}\label{SectionLojasiewicz}

The main purpose of this section is to prove Theorem \ref{Theorem11}, which gives a global {\L}ojasiewicz inequality on comparing the rate of
growth of two polynomial functions. 
Note that we do not suppose the polynomial $h$ to be convenient. On the other hand, the assumption that the polynomial $g$ is convenient cannot be dropped. This is shown in the following example.

\begin{example}\label{Example31}
Consider the polynomial mapping
\begin{eqnarray*}
(g, h) \colon \mathbb{R}^2 \rightarrow \mathbb{R}^2, \quad (x_1, x_2) \mapsto \big((x_1^2 - 1)^2 + (x_1x_2 - 1)^2, (x_1^2 - 1)^2 + (x_2^2 - 1)^2\big).
\end{eqnarray*}
Clearly, $(g, h)$ is non-degenerate at infinity, $g$ is not convenient
(see Figure~1), and 
$$g^{-1}(0) \subset h^{-1}(0).$$ 

\begin{figure}[h]
\unitlength = 1.25cm
$$\begin{picture}(5, 5)(1, -.25)
\put (0, 0){\vector(0, 1){4.5}}
\put (0, 0){\vector(1, 0){5}}

{\color{blue}
\put (0, 0){\line(1, 1){2}}
\put (0, 0){\line(1, 0){4}}
\put (2, 2){\line(1, -1){2}}
}
\put (5.15, 0){$x_1$}
\put (0, 4.75){$x_2$}

\put (1, -.15){\line(0, 1){.15}}
\put (2, -.15){\line(0, 1){.15}}
\put (3, -.15){\line(0, 1){.15}}
\put (4, -.15){\line(0, 1){.15}}

\put (-.15, 1){\line(1, 0){.15}}
\put (-.15, 2){\line(1, 0){.15}}
\put (-.15, 3){\line(1, 0){.15}}
\put (-.15, 4){\line(1, 0){.15}}

{\color{red}
\put (0, 0){\circle*{.15}}
\put (2, 0){\circle*{.15}}
\put (1, 1){\circle*{.15}}
\put (2, 2){\circle*{.15}}
\put (4, 0){\circle*{.15}}}

\put (0, -.5){$0$}
\put (1, -.5){$1$}
\put (2, -.5){$2$}
\put (3, -.5){$3$}
\put (4, -.5){$4$}
\put (-.35, 1){$1$}
\put (-.35, 2){$2$}
\put (-.35, 3){$3$}
\put (-.35, 4){$4$}
\put (2, 2.5){$\Gamma(g)$}
\end{picture}
\begin{picture}(5, 4)(-1., -.25)
\put (0, 0){\vector(0, 1){4.5}}
\put (0, 0){\vector(1, 0){5}}

{\color{blue}
\put (0, 4){\line(1, -1){4}}
\put (0, 0){\line(1, 0){4}}
\put (0, 0){\line(0, 1){4}}
}

\put (5.15, 0){$x_1$}
\put (0, 4.75){$x_2$}

\put (1, -.15){\line(0, 1){.15}}
\put (2, -.15){\line(0, 1){.15}}
\put (3, -.15){\line(0, 1){.15}}
\put (4, -.15){\line(0, 1){.15}}

\put (-.15, 1){\line(1, 0){.15}}
\put (-.15, 2){\line(1, 0){.15}}
\put (-.15, 3){\line(1, 0){.15}}
\put (-.15, 4){\line(1, 0){.15}}

{\color{red}
\put (0, 0){\circle*{.15}}
\put (2, 0){\circle*{.15}}
\put (4, 0){\circle*{.15}}
\put (0, 2){\circle*{.15}}
\put (0, 4){\circle*{.15}}}

\put (0, -.5){$0$}
\put (1, -.5){$1$}
\put (2, -.5){$2$}
\put (3, -.5){$3$}
\put (4, -.5){$4$}

\put (-.35, 1){$1$}
\put (-.35, 2){$2$}
\put (-.35, 3){$3$}
\put (-.35, 4){$4$}
\put (2.25, 2){$\Gamma(h)$}
\end{picture}$$
\caption{The Newton polyhedra of $g$ and $h$ in Example~\ref{Example31}.}
\end{figure}
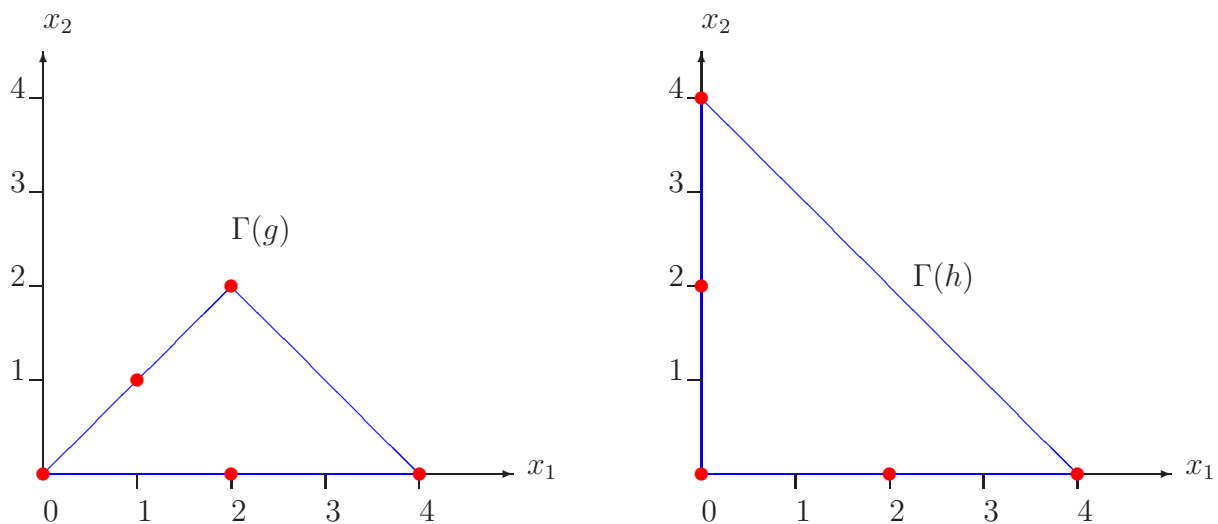

Furthermore, we have
\begin{eqnarray*}
\lim_{k \to \infty} g\big(\frac{1}{k}, k\big) &=& 1 \quad \textrm{ and } \quad \lim_{k \to \infty} h\big(\frac{1}{k}, k\big) \ = \ +\infty,
\end{eqnarray*}
and so there are no constants $c > 0, \alpha > 0,$ and $\beta > 0$ such that
\begin{eqnarray*}
|g(x)|^{\alpha} + |g(x)|^{\beta} &\ge& c|h(x)| \quad \textrm{ for all } \quad x = (x_1, x_2) \in \mathbb{R}^2.
\end{eqnarray*}
\end{example}

The following simple example shows that the exponents $\alpha$ and $\beta$ in Theorem~\ref{Theorem11} are different in general.
\begin{example}
Consider the polynomial mapping
\begin{eqnarray*}
(g, h) \colon \mathbb{R}^2 \rightarrow \mathbb{R}^2, \quad (x_1, x_2) \mapsto \big(x_1^2 + x_2^4, x_1^2 + x_2^2\big).
\end{eqnarray*}
Clearly, $(g, h)$ is non-degenerate at infinity, $g$ is convenient, and $g^{-1}(0) \subset h^{-1}(0).$ 
Furthermore, it is not hard to see that there are no constants $c > 0$ and $\alpha > 0$ such that
\begin{eqnarray*}
|g(x)|^{\alpha} &\ge& c|h(x)| \quad \textrm{ for all } \quad x = (x_1, x_2) \in \mathbb{R}^2.
\end{eqnarray*}
On the other hand, it holds that
\begin{eqnarray*}
|g(x)|^{\frac{1}{2}} + |g(x)| &\ge& |h(x)| \quad \textrm{ for all } \quad x = (x_1, x_2) \in \mathbb{R}^2.
\end{eqnarray*}

\end{example}

To prove Theorem~\ref{Theorem11}, we first need the following definition.

\begin{definition}
Given a polynomial function $f \colon \mathbb{R}^n \rightarrow \mathbb{R}$ and a smooth semi-algebraic manifold $X \subset \mathbb{R}^n$ we let
$$\widetilde{K}_\infty(f|_X) := \left\{
t \in \mathbb{R}: \ \left\{ 
	\begin{aligned}
		&\exists \{x^k\} \subset X, \textrm{ s.t. } \|x^k\| \to +\infty, f(x^k) \to t, \\  
		&\textrm{ and } \|\nabla (f|_X)(x^k)\| \to 0
	\end{aligned}
\right.\right\}.$$
We also set 
\begin{eqnarray*}
{K}_0(f|_X) &:=& \{t \in \mathbb{R} \ : \ \exists x \in X \textrm{ with } f(x) = t  \textrm{ and } \nabla (f|_X) (x) = 0 \}
\end{eqnarray*}
which is the {\em set of critical values} of the restriction $f|_X.$
Note that, by the semi-algebraic Sard Theorem (see \cite[Theorem~1.9]{HaHV2017} and \cite{Kurdyka2000-1}), ${K}_0(f|_X)$ is a finite subset of $\mathbb{R}.$ 

If $X = \mathbb{R}^n$, we write $\widetilde{K}_\infty(f) $ and ${K}_0(f) $ instead of $\widetilde{K}_\infty(f|_{\mathbb{R}^n})$ and ${K}_0(f|_{\mathbb{R}^n}),$ respectively.
\end{definition}

\begin{lemma} \label{Lemma31-1}
Let $g \colon \mathbb{R}^n \rightarrow \mathbb{R}$ be a polynomial function, which is non-degenerate at infinity and convenient, then
$\widetilde{K}_\infty(g) = \emptyset.$
\end{lemma}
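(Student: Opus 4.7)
The plan is to argue by contradiction using the Curve Selection Lemma at infinity combined with a Puiseux-asymptotic analysis. Suppose some $t \in \widetilde{K}_\infty(g)$. Then there is a sequence $x^k$ with $\|x^k\|\to\infty$, $g(x^k)\to t$, and $\|\nabla g(x^k)\|\to 0$. Applying Lemma~\ref{CurveSelectionLemma} with $A=\mathbb{R}^n$ and the semi-algebraic mapping $x \mapsto (g(x), \|\nabla g(x)\|^2)$, I obtain an analytic semi-algebraic curve $\varphi\colon (0,\epsilon)\to\mathbb{R}^n$ with $\|\varphi(\tau)\|\to\infty$, $g(\varphi(\tau))\to t$, and $\|\nabla g(\varphi(\tau))\|\to 0$ as $\tau\to 0^+$.

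Next, I reduce to the situation where every coordinate of $\varphi$ is a nontrivial Puiseux series. Setting $J := \{j : \varphi_j\not\equiv 0\}$, the image of $\varphi$ lies in $\mathbb{R}^J$. Using Remark~(i), $g|_{\mathbb{R}^J}$ is still convenient and still non-degenerate at infinity in $\mathbb{R}^J$: any $q'\in\mathbb{R}^J$ with $d(q',\Gamma(g|_{\mathbb{R}^J}))<0$ can be extended to $q\in\mathbb{R}^n$ by giving sufficiently large coordinates to indices outside $J$ so that $\Delta(q,\Gamma(g))=\Delta(q',\Gamma(g|_{\mathbb{R}^J}))\subset\mathbb{R}^J$ and the two face polynomials coincide. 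Replacing $g$ accordingly, I assume $J = \{1,\ldots,n\}$. The Growth Dichotomy Lemma then gives expansions $\varphi_j(\tau) = a_j\tau^{q_j} + \text{h.o.t.}$ with $a_j\ne 0$ and $q_j\in\mathbb{Q}$. Put $a := (a_1,\ldots,a_n)\in(\mathbb{R}^*)^n$ and $q := (q_1,\ldots,q_n)$.

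The crux is the leading-order $\tau$-analysis. Because $\|\varphi(\tau)\|\to\infty$, some $q_{j^*}<0$; convenience supplies an integer $\kappa_{j^*}\ge 1$ with $\kappa_{j^*}e^{j^*}\in\mathrm{supp}(g)$, hence $d := d(q,\Gamma(g)) \le q_{j^*}\kappa_{j^*} < 0$. Set $\Delta := \Delta(q,\Gamma(g))$. The leading term of $g(\varphi(\tau))$ equals $g_\Delta(a)\,\tau^d$, and boundedness forces $g_\Delta(a) = 0$. Non-degeneracy at infinity of $g$ then yields $\nabla g_\Delta(a)\ne 0$, so some index $j_0$ satisfies $\tfrac{\partial g_\Delta}{\partial x_{j_0}}(a) \ne 0$; a direct Puiseux computation gives
\[
\frac{\partial g}{\partial x_{j_0}}(\varphi(\tau)) \;=\; \frac{\partial g_\Delta}{\partial x_{j_0}}(a)\,\tau^{\,d-q_{j_0}} + \text{h.o.t.}
\]

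I finish by splitting on the sign of $q_{j_0}$. If $q_{j_0}\ge 0$, then $d - q_{j_0} < 0$, so $\bigl|\tfrac{\partial g}{\partial x_{j_0}}(\varphi(\tau))\bigr|\to\infty$, contradicting $\|\nabla g(\varphi(\tau))\|\to 0$. If $q_{j_0}<0$, convenience again supplies an integer $\kappa_{j_0}\ge 1$ with $\kappa_{j_0}e^{j_0}\in\mathrm{supp}(g)$, so $d \le q_{j_0}\kappa_{j_0} \le q_{j_0}$ (because $q_{j_0}<0$ and $\kappa_{j_0}\ge 1$), whence $d - q_{j_0}\le 0$; hence $\bigl|\tfrac{\partial g}{\partial x_{j_0}}(\varphi(\tau))\bigr|\not\to 0$, the desired contradiction. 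The step I expect to be most delicate is the reduction to $J=\{1,\ldots,n\}$: one must verify carefully that restricting $g$ to a coordinate subspace preserves both convenience and Khovanskii non-degeneracy at infinity, which rests on the identifications $\Gamma(g)\cap\mathbb{R}^J = \Gamma(g|_{\mathbb{R}^J})$ and $g_\Delta = (g|_{\mathbb{R}^J})_{\Delta'}$ for appropriately extended weights.
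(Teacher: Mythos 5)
Your proposal is correct and follows essentially the same route as the paper's proof: contradiction via the Curve Selection Lemma at infinity, Puiseux expansion of the curve, the face $\Delta(q,\Gamma(g))$ with $d:=d(q,\Gamma(g))<0$ and $d\le q_j$ coming from convenience, the leading-term computations for $g(\varphi(\tau))$ and its partial derivatives, and a contradiction with Khovanskii non-degeneracy. The only cosmetic differences are that you first restrict $g$ to $\mathbb{R}^J$ (justifying that convenience and non-degeneracy pass to the restriction via the same ``large $M$'' extension of the weight vector that the paper applies directly, without reducing) and that you run the final step contrapositively, using non-degeneracy to produce one index $j_0$ with $\frac{\partial g_\Delta}{\partial x_{j_0}}(a)\ne 0$ and contradicting $\|\nabla g(\varphi(\tau))\|\to 0$, whereas the paper deduces from $d\le\min_{j\in J}q_j$ that all face partials vanish and then contradicts non-degeneracy.
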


\begin{proof}
The result can be found in \cite[Theorem 1]{Dinh2014-1}. For the sake of completeness, we include the proof below.
By contradiction, suppose that there exist a sequence $\{x^k\}_{k \in {\mathbb N}} \subset {\mathbb R}^n$ and a value $y \in {\mathbb R}$ such that
$$\lim_{k \to \infty} \|x^k \|= \infty, \quad \lim_{k \to \infty}  g(x^k) = y,  \quad \textrm{ and } \quad \lim_{k \to \infty} \|\nabla g(x^k)\| = 0.$$
By Lemma~\ref{CurveSelectionLemma}, there exists an analytic mapping
$\varphi \colon (0, \epsilon) \rightarrow \mathbb{R}^n, t \mapsto
(\varphi_1(t), \ldots, \varphi_n(t)),$ such that
\begin{enumerate}
  \item [(a1)] $\lim_{t \to 0} \| \varphi(t)\| = \infty;$
  \item [(a2)] $\lim_{t \to 0} g(\varphi(t)) = y;$ and
  \item [(a3)] $\lim_{t \to 0}  \|\nabla g(\varphi(t))\| = 0.$
\end{enumerate}

Let $J := \{j \ : \ \varphi_j \not\equiv 0\}.$ By Condition (a1), 
$J \ne \emptyset.$ By Lemma~\ref{GrowthDichotomyLemma}, for each $j \in J,$ we expand the coordinate functions $\varphi_j$ as follows
\begin{eqnarray*}
	\varphi_j(t) &=&  x_j^0 t^{q_j} + o(t^{q_j}),
\end{eqnarray*}
where $x_j^0 \ne 0$ and $q_j \in \mathbb{Q}.$ From Condition~(a1), we get $\min_{j \in J} q_j  < 0.$

Let $q := (q_1, \ldots, q_n) \in \mathbb{R}^n,$ where $q_j := M$ for $j \not \in J$ with $M$ being sufficiently large and satisfying
\begin{eqnarray*}
M & > & \max \left  \{\sum_{j \in J} q_j \kappa_j \ : \ \kappa \in \Gamma(g) \right\}.
\end{eqnarray*}
Let $d$ be the minimal value of the linear function $\sum_{j = 1}^n q_j \kappa_j$ on $\Gamma(g)$ and let $\Delta$ be the maximal face of $\Gamma(g)$ (maximal with respect to the inclusion of faces)  where the linear function takes this value, i.e.,
\begin{eqnarray*}
d &:=& d(q, \Gamma(g)) \quad \textrm{ and } \quad \Delta \ := \ \Delta(q, \Gamma(g)).
\end{eqnarray*}

Recall that ${\mathbb R}^J := \{x := (x_1, x_2, \ldots, x_n) \in
	{\mathbb R}^n \ : \ x_j = 0 \textrm{ for } j \not
\in J \}.$ Since $g$ is convenient, the restriction
$g|_{\mathbb{R}^J}$ is not constant, $\Gamma(g) \cap {\mathbb{R}^J} =
\Gamma(g|_{\mathbb{R}^J}) $ is nonempty and different from $\{0\}.$
Furthermore,  by definition of the vector $q,$ one has 
\begin{eqnarray*}
d &=& d(q, \Gamma(g|_{\mathbb{R}^J})) \quad \textrm{ and } \quad \Delta \ = \ \Delta(q, \Gamma(g|_{\mathbb{R}^J})) 
\ \subset \ {\mathbb{R}^J}.
\end{eqnarray*}
A direct calculation shows that
\begin{eqnarray*}
	g(\varphi(t)) &=& g_{\Delta}(x^0)t^{d} + o(t^{d}),
\end{eqnarray*}
where $x^0 := (x_1^0, \ldots, x_n^0)$ with $x_j^0: = 1$ for $j \not \in J$
(note that, for $j \not \in J$, as $g_{\Delta}$ does not depend on the variable $x_j$, we can choose $x_j^0$ arbitrarily).
Since $g$ is convenient, for each $j = 1, \ldots, n,$ there exists a
natural number $m_j \ge 1$ such that $m_j e^j \in \Gamma(g).$ Let $j_* \in J$ be such that $q_{j_*} := \min_{j \in J} q_j$. As $q_{j_*}<0$, it is clear that
$$d\le q_{j_*} m_{j_*}\le q_{j_*} <0.$$
Now, by Condition (a2), we have $g_{\Delta}(x^0) = 0$.

On the other hand, for $j \in J$, we have
\begin{eqnarray*}
\frac{\partial g}{\partial x_j}(\varphi(t)) &=& \frac{\partial
	g_{\Delta}}{\partial x_j}(x^0)t^{d - q_j}  + o(t^{d - q_j}).
\end{eqnarray*}
Since $d\le \min_{j\in J} q_j,$ it follows from (a3) that $\frac{\partial g_{\Delta}}{\partial x_j}(x^0)= 0$ for all $j \in J.$
So this, together with $g_{\Delta}(x^0) = 0,$ implies that $g$ is not
Khovanskii non-degenerate at infinity, which contradicts our
assumption.
\end{proof}

\begin{lemma} \label{Lemma31-2}
Let $(g, h) \colon \mathbb{R}^n \rightarrow \mathbb{R}^2$ be a
polynomial mapping, which is non-degenerate at infinity. If $g$ is
convenient, then $\widetilde{K}_\infty(g|_{\{h = r\}}) = \emptyset$
for $0 < |r| \ll 1$ and for $|r| \gg 1.$ 
\end{lemma}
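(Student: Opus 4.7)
I plan to argue by contradiction following the curve-selection template of Lemma~\ref{Lemma31-1}, replacing the gradient $\nabla g$ by the Lagrange-multiplier form of the restricted gradient. Suppose there exist $r$ with $0 < |r| \ll 1$ or $|r| \gg 1$ and $t \in \widetilde{K}_\infty(g|_{\{h = r\}})$; then there is a sequence $\{x^k\} \subset \{h = r\}$ with $\|x^k\| \to \infty$, $g(x^k) \to t$, and $\|\nabla(g|_{\{h=r\}})(x^k)\| \to 0$. Away from the critical locus of $h$, we have $\nabla(g|_{\{h=r\}})(x^k) = \nabla g(x^k) - \lambda_k \nabla h(x^k)$ with $\lambda_k := \langle \nabla g(x^k),\nabla h(x^k)\rangle / \|\nabla h(x^k)\|^2$. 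Applying Lemma~\ref{CurveSelectionLemma} to the corresponding semi-algebraic set in $\mathbb{R}^n \times \mathbb{R}$, I obtain an analytic semi-algebraic curve $\varphi\colon (0,\epsilon) \to \mathbb{R}^n$ and a semi-algebraic function $\lambda(s)$ with $\|\varphi(s)\| \to \infty$, $h(\varphi(s)) \equiv r$, $g(\varphi(s)) \to t$, and $\nabla g(\varphi(s)) - \lambda(s)\nabla h(\varphi(s)) \to 0$ as $s \to 0^+$.

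Expanding $\varphi_j(s) = x_j^0 s^{q_j} + (\text{h.o.t.})$ with $x_j^0 \ne 0$ for $j \in J := \{j : \varphi_j \not\equiv 0\}$, and, as in Lemma~\ref{Lemma31-1}, choosing $q_j$ sufficiently large and $x_j^0 := 1$ for $j \notin J$, I arrange $\Delta_g := \Delta(q,\Gamma(g)) \subset \mathbb{R}^J$, $\Delta_h := \Delta(q,\Gamma(h)) \subset \mathbb{R}^J$, with $\min_{j \in J} q_j < 0$. Write $d_g := d(q,\Gamma(g))$ and $d_h := d(q,\Gamma(h))$. By the argument of Lemma~\ref{Lemma31-1}, convenience of $g$ forces $d_g < 0$, and boundedness of $g(\varphi(s))$ forces $g_{\Delta_g}(x^0) = 0$. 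The identity $h(\varphi(s)) \equiv r \ne 0$ now splits into three cases according to the sign of $d_h$: $d_h > 0$ would give $r = \lim_{s \to 0^+} h(\varphi(s)) = 0$, a contradiction; $d_h < 0$ forces $h_{\Delta_h}(x^0) = 0$; and $d_h = 0$ forces $h_{\Delta_h}(x^0) = r$.

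In the case $d_h < 0$, I would expand $\lambda(s) = \lambda_0 s^\mu + (\text{h.o.t.})$ (allowing $\lambda \equiv 0$) and match leading terms in the asymptotic relation $\partial_j g(\varphi(s)) - \lambda(s)\partial_j h(\varphi(s)) \to 0$ for each $j \in J$. A comparison of the exponents $d_g - q_j$ and $\mu + d_h - q_j$ forces either $\nabla_J g_{\Delta_g}(x^0)$ and $\nabla_J h_{\Delta_h}(x^0)$ to be $\mathbb{R}$-linearly dependent, or one of these gradient vectors to vanish. Combined with $g_{\Delta_g}(x^0) = h_{\Delta_h}(x^0) = 0$ and $x^0 \in (\mathbb{R}^*)^n$, this contradicts the non-degeneracy at infinity of $(g,h)$: either the $2 \times n$ rank condition for the pair collapses, or the single-component Khovanskii condition for $g$ or $h$ fails.

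The main obstacle is the remaining case $d_h = 0$, where the pair non-degeneracy is silent (it requires $d_h < 0$) and $r = h_{\Delta_h}(x^0)$ genuinely varies with $x^0$. My plan is to observe that, as $(q,\Delta_g,\Delta_h)$ ranges over the finitely many face configurations and $x^0$ over the constraint locus $\{g_{\Delta_g}(x^0) = 0,\ \nabla_J g_{\Delta_g}(x^0) \in \mathbb{R}\cdot \nabla_J h_{\Delta_h}(x^0),\ x^0 \in (\mathbb{R}^*)^J\}$, the attainable values $h_{\Delta_h}(x^0)$ form a closed semi-algebraic subset of $\mathbb{R}$; a secondary curve-selection argument at the boundary of this locus together with the Khovanskii non-degeneracy of $h$ should show this set is bounded and bounded away from $0$, hence contained in some $\{\delta \le |r| \le R\}$ with $0 < \delta < R$. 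This excludes both the regimes $0 < |r| \ll 1$ and $|r| \gg 1$ and completes the contradiction. I expect the verification of boundedness-away-from-zero in the $d_h = 0$ branch to be the trickiest step.
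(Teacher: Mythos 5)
Your overall architecture (contradiction, Lagrange-multiplier form of the restricted gradient, Curve Selection Lemma at infinity, Puiseux-type expansions, comparison of the exponents $d_g-q_j$ versus $\mu+d_h-q_j$) is the same as the paper's, and your treatment of the branches $d_h>0$ and $d_h<0$ is essentially the paper's Cases 1--3 with negative $d_2$. The genuine gap is exactly where you flagged it: the branch $d_h=0$. Your plan --- show that the attainable values $h_{\Delta_h}(x^0)$ on the locus $\{g_{\Delta_g}=0,\ \nabla_J g_{\Delta_g}\in\mathbb{R}\cdot\nabla_J h_{\Delta_h}\}$ form a closed semi-algebraic subset of $\mathbb{R}$ that is bounded and bounded away from $0$, ``by a secondary curve selection together with the Khovanskii non-degeneracy of $h$'' --- does not work as stated. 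Khovanskii non-degeneracy only constrains faces $\Delta(q,\Gamma(h))$ with $d(q,\Gamma(h))<0$ (and the pair condition needs both $d$'s negative), so it says nothing about a face with $d_h=0$; in particular nothing prevents $h_{\Delta_h}$ from having the critical value $0$ on $\{g_{\Delta_g}=0\}$, so ``bounded away from $0$'' can simply fail. Moreover, closedness and semi-algebraicity alone would still permit the attainable set to contain an interval reaching $0$ or an unbounded interval, which is precisely what must be excluded to kill the regimes $0<|r|\ll 1$ and $|r|\gg 1$. The missing idea is the semi-algebraic Sard theorem: in this branch $x^0$ is a critical point either of $h_{\Delta_h}$ (when $\nabla_J h_{\Delta_h}(x^0)=0$, i.e.\ when $d_g>\mu$) or of $h_{\Delta_h}|_X$ with $X:=\{x\in(\mathbb{R}^*)^n:\ g_{\Delta_g}(x)=0,\ \nabla g_{\Delta_g}(x)\neq 0\}$ (when $d_g=\mu$), and $r=h_{\Delta_h}(x^0)$ is the corresponding critical value. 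Since there are only finitely many faces and each such critical value set $K_0$ is finite, one fixes at the outset that $r$, being small nonzero or large, lies outside this finite set; this is exactly how the paper proceeds, via its standing assumptions (i)--(ii) stated before the contradiction argument. With that replacement your $d_h=0$ branch closes immediately; without it, the step ``bounded and bounded away from zero'' is both unproved and, as literally stated, possibly false.

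A secondary omission: you ``arrange'' $\Delta_h\subset\mathbb{R}^J$ just as for $g$, but convenience is assumed only for $g$. If $h|_{\mathbb{R}^J}$ is constant, then $\Gamma(h)\cap\mathbb{R}^J$ is $\emptyset$ or $\{0\}$, the face $\Delta(q,\Gamma(h))$ need not lie in $\mathbb{R}^J$, and the leading-term expansions you use for $h(\varphi(s))$ and $\partial_j h(\varphi(s))$ are no longer valid (every monomial of $h_{\Delta_h}$ then vanishes identically along the curve). The paper handles this case separately and quickly: if $h|_{\mathbb{R}^J}$ is constant, then $\partial_j h(\varphi(s))\equiv 0$ for $j\in J$, so the multiplier term drops out of the condition $\nabla g(\varphi(s))-\lambda(s)\nabla h(\varphi(s))\to 0$, forcing $\partial_j g(\varphi(s))\to 0$ for all $j\in J$; running the argument of Lemma~\ref{Lemma31-1} for $g|_{\mathbb{R}^J}$ then contradicts the non-degeneracy of $g$. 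This is a short fix, but it must be included before you may assume $\Delta_h\subset\mathbb{R}^J$ and expand as you do.
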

\begin{proof}
For simplicity of notation, we write $f_1$ and $f_2$ instead of $g$ and $h,$ respectively.
We will show that $\widetilde{K}_\infty(f_1|_{\{f_2 = r\}}) = \emptyset$ 
for $0 < |r| \ll 1$ and for $|r| \gg 1.$ 

For $0 < |r| \ll 1$ or $|r| \gg 1,$ in view of the semi-algebraic Sard Theorem, we can make the following assumptions without loss of generality: 
\begin{enumerate}
\item [{(i)}] $r\not\in K_0(f_{2,\Delta_2})$ for any face $\Delta_2$ of $\Gamma(f_2)$;
\item [{(ii)}] If the set $X := \{x \in (\mathbb{R}^*)^n \ : \  f_{1,\Delta_1} (x) = 0, \nabla f_{1,\Delta_1} (x)  \ne 0\}$ is not empty for some face 
$\Delta_1$ of $\Gamma(f_1)$, then $r \not\in K_0(f_{2,\Delta_2}|_X)$ for any face $\Delta_2$ of $\Gamma(f_2).$ (Clearly, if $X$ is not empty, then it a semi-algebraic smooth hypersurface.)
\end{enumerate}

By contradiction, suppose that $\widetilde{K}_\infty(f_1|_{\{f_2=r\}})\not = \emptyset$ for some  $0 < |r| \ll 1$ or $|r| \gg 1,$ i.e., 
there exist a sequence $\{x^k\}_{k \in {\mathbb N}} \subset {\mathbb R}^n$ and a value $y \in {\mathbb R}$ such that
$$\lim_{k \to \infty} \|x^k \|= \infty, \quad \lim_{k \to \infty}  f_1(x^k) = y, \quad f_2(x^k) = r,  \quad \textrm{ and } \quad
\lim_{k \to \infty} \|\nabla (f_1|_{\{f_2 = r\}})(x^k)\| = 0.$$
By definition, there exists a sequence $\lambda^k \in {\mathbb R}$ such
that for all $k \ge 1,$ we have 
\begin{eqnarray*}
	&& \nabla (f_1|_{\{f_2 = r\}})(x^k)\ = \ \nabla f_1(x^k)-\lambda^k\nabla
	f_2(x^k).
\end{eqnarray*}
By Lemma~\ref{CurveSelectionLemma}, there exist an analytic mapping
$\varphi(t) := (\varphi_1(t), \ldots, \varphi_n(t))$ and an analytic function $\lambda(t)$ with $0 < t \ll 1$ such that
\begin{enumerate}
  \item [(b1)] $\lim_{t \to 0} \| \varphi(t)\| = \infty;$
  \item [(b2)] $\lim_{t \to 0} f_1(\varphi(t)) = y;$
  \item [(b3)] $f_2(\varphi(t)) = r;$ and
  \item [(b4)] $\lim_{t \to 0}  \|\nabla f_1(\varphi(t))-\lambda(t)\nabla f_2(\varphi(t))\| = 0.$
\end{enumerate}
Let $J := \{j \ : \ \varphi_j \not\equiv 0\}\ne \emptyset$ and for each $j \in J,$ expand $\varphi_j$ as follows
\begin{eqnarray*}
\varphi_j(t) &=&  x_j^0 t^{q_j} + o(t^{q_j}),
\end{eqnarray*}
where $x_j^0 \ne 0$ and $q_j  \in \mathbb{Q}.$ Let $q := (q_1, \ldots, q_n) \in \mathbb{R}^n,$ where $q_j := M$ for $j \not \in J$ with $M$ being sufficiently large and satisfying
\begin{eqnarray*}
M & > & \max_{i = 1, 2}\left  \{\sum_{j \in J} q_j \kappa_j \ : \ \kappa \in \Gamma(f_i) \right\}.
\end{eqnarray*}
For each $i = 1, 2,$ let $d_i$ be the minimal value of the linear function $\sum_{j = 1}^n q_j \kappa_j$ on $\Gamma(f_i)$ and let $\Delta_i$ be the maximal face of $\Gamma(f_i)$ (maximal with respect to the inclusion of faces)  where the linear function takes this value, i.e.,
\begin{eqnarray*}
d_i &:=& d(q, \Gamma(f_i)) \quad \textrm{ and } \quad \Delta_i \ := \ \Delta(q, \Gamma(f_i)).
\end{eqnarray*}

Since $f_1$ is convenient, the restriction $f_1|_{\mathbb{R}^J}$ is not constant. Furthermore, the restriction $f_2|_{\mathbb{R}^J}$ is not constant. If this is not the case, then it follows from (b4) that
\begin{eqnarray*}
\lim_{t\rightarrow 0}\frac{\partial f_{1}}{\partial x_j}(\varphi(t)) &=& 0 \quad \textrm {for all } \quad j \in J.
\end{eqnarray*}
Replacing $f_1$ by the restriction $f_1|_{\mathbb{R}^J}$ and repeating
the previous arguments, we see that $f_1$ is not Khovanskii
non-degenerate at infinity. Then, the polynomial mapping $(f_1, f_2)$ is not non-degenerate at infinity, which contradicts our assumption.

Therefore, the restriction of $f_i, i = 1, 2,$ on $\mathbb{R}^J$ is not constant, and so $\Gamma(f_i) \cap {\mathbb{R}^J} = \Gamma(f_i|_{\mathbb{R}^J}) $ is nonempty and different from $\{0\}.$ Furthermore,  by definition of the vector $q,$ one has 
\begin{eqnarray*}
d_i &=& d(q, \Gamma(f_i|_{\mathbb{R}^J})) \quad \textrm{ and } \quad \Delta_i \ = \ \Delta(q, \Gamma(f_i|_{\mathbb{R}^J})) 
\ \subset \ {\mathbb{R}^J}.
\end{eqnarray*}

Similarly to what was done in the proof of Lemma~\ref{Lemma31-1}, we obtain $d_1 \le q_{j_*}  := \min_{j \in J} q_j < 0$ and $f_{1,\Delta_1}(x^0) = 0,$ where $x^0 := (x_1^0, \ldots, x_n^0)$ with $x_j^0: = 1$ for $j \not \in J.$ 
As  $f_{1,\Delta_1}$ and $f_{2,\Delta_2}$ do not depend on the variable $x_j$ for all $j \not \in J$, we have
\begin{eqnarray*}
\frac{\partial f_{1, \Delta_1}}{\partial x_j}(x^0) &=& 
\frac{\partial f_{2, \Delta_2}}{\partial x_j}(x^0)  \ = \ 0\ \text{ for }\ j \not \in J.
\end{eqnarray*}

Note that $\lambda(t) \not\equiv 0,$ since otherwise $y \in \widetilde{K}_\infty(f_1) = \emptyset,$ a contradiction. Hence, we can expand the coordinate $\lambda(t)$ in terms of $t$ as
\begin{eqnarray*}
\lambda(t) &=&  \lambda^0 t^{\theta} + o(t^{\theta}),
\end{eqnarray*}
where $\lambda^0 \neq 0$ and $\theta \in \mathbb{Q}.$ There are three cases to be considered.

\subsubsection*{Case 1: $d_1<d_2+\theta$.} For each $j\in J$, we have 
\begin{eqnarray*}
\frac{\partial f_1}{\partial x_j}(\varphi(t))-\lambda(t) \frac{\partial f_2}{\partial x_j}(\varphi(t))
&=& \frac{\partial f_{1, \Delta_1}}{\partial x_j}(x^0)t^{d_1-q_j}  +
o(t^{d_1-q_j}).
\end{eqnarray*}
Since $d_1\le q_{j_*}$, in view of (b4), we have $\frac{\partial f_{1,
\Delta_1}}{\partial x_j}(x^0) = 0$ for all $j \in J.$ As
$f_{1,\Delta_1}(x^0) = 0,$ it implies that $f_1$ is not Khovanskii
non-degenerate at infinity,
which contradicts our assumption.

\subsubsection*{Case 2: $d_1>d_2+\theta$.} For each $j\in J$, we have 
\begin{eqnarray*}
\frac{\partial f_1}{\partial x_j}(\varphi(t))-\lambda(t) \frac{\partial f_2}{\partial x_j}(\varphi(t))
&=& -\lambda^0\frac{\partial f_{2, \Delta_2}}{\partial
x_j}(x^0)t^{d_2+\theta-q_j}  + o(t^{d_2+\theta-q_j}).
\end{eqnarray*}
From $d_2+\theta < d_1\le q_{j_*}$ and (b4), we get $\frac{\partial f_{2, \Delta_2}}{\partial x_j}(x^0) = 0$ for all $j \in J.$ 
On the other hand, a simple calculation shows that
\begin{eqnarray*}
f_2(\varphi(t)) &=& f_{2,\Delta_2}(x^0)t^{d_2} + o(t^{d_2}).
\end{eqnarray*}
If $d_2 < 0$, then it follows from (b3) that $f_{2,\Delta_2}(x^0) = 0$
and so $f_2$ is not Khovanskii non-degenerate at infinity, which 
contradicts our assumption. If $d_2 =  0,$ we have $f_{2,\Delta_2}(x^0) = r$ and so $r \in K_0(f_{2,\Delta_2}),$ a contradiction. Finally, if $d_2 > 0,$ then $r = 0,$ which contradicts the assumption $|r| > 0.$

\subsubsection*{Case 3: $d_1=d_2+\theta$.} For each $j\in J$, we have 
\begin{eqnarray*}
\frac{\partial f_1}{\partial x_j}(\varphi(t))-\lambda(t)
\frac{\partial f_2}{\partial x_j}(\varphi(t)) &=& \left(\frac{\partial
	f_{1, \Delta_1}}{\partial x_j}(x^0)-\lambda^0\frac{\partial f_{2,
	\Delta_2}}{\partial x_j}(x^0)\right)t^{d_1-q_j} + o(t^{d_1-q_j}). 
\end{eqnarray*}
Since $d_1\le \min_{j \in J} q_{j} < 0,$ it follows from (b4) that 
\begin{eqnarray*}
\frac{\partial f_{1, \Delta_1}}{\partial x_j}(x^0)-\lambda^0\frac{\partial f_{2, \Delta_2}}{\partial x_j}(x^0) &=& 0 \quad \textrm{ for all } \quad j \in J.
\end{eqnarray*}
Observe that $\frac{\partial f_{1, \Delta_1}}{\partial x_{j}}(x^0)\ne 0$ for some $j \in J$ since otherwise, we get a contradiction by repeating the arguments in Case~1. Hence the set
\begin{eqnarray*}
X &:=& \{x \in (\mathbb{R}^*)^n \ : \  f_{1,\Delta_1} (x) = 0, \nabla f_{1,\Delta_1} (x)  \ne 0\}
\end{eqnarray*}
is a nonempty semi-algebraic smooth manifold in $\mathbb{R}^n.$ Moreover, $x^0$ is a critical point of $f_{2,\Delta_2}|_X.$ Finally, by a similar argument as Case 2, we can see that either $d_2 < 0$ and $f_{2,\Delta_2}(x^0) = 0$ which contradicts the assumption that the polynomial mapping $(f_1, f_2)$ is Khovanskii non-degenerate at infinity, or $d_2 = 0$ and $f_{2,\Delta_2}(x^0) = r$ which contradicts the assumption  $r\not\in K_0(f_{2,\Delta_2}|_X),$ or $d_2 > 0$ and $r = 0,$ which contradicts the assumption $|r| > 0.$
\end{proof}

The following definition is inspired by \cite[Definition 3.1]{Dinh2012}.
\begin{definition}
Let $g, h \colon \mathbb{R}^n \rightarrow \mathbb{R}$ be polynomial functions. A sequence $\{x^k\}_{k \in \mathbb{N}} \subset \mathbb{R}^n$ with $\|x^k\| \to  +\infty$ is said to be
\begin{enumerate}[{\rm (i)}]
\item a {\em sequence of the first type} if $g(x^k) \to  0$ and $|h(x^k)| \ge \delta$ for some $\delta > 0;$
\item a {\em sequence of the second type} if the sequence $\{g(x^k)\}$ is bounded and $|h(x^k)| \to  +\infty.$
\end{enumerate}
\end{definition}

\begin{lemma}\label{Lemma32}
Let $g, h \colon \mathbb{R}^n \rightarrow \mathbb{R}$ be polynomial functions such that $g^{-1}(0) \subset h^{-1}(0)$ and $\widetilde{K}_\infty(g) = \emptyset.$
Then following two statements hold:
\begin{enumerate}[{\rm (i)}]
\item If $\widetilde{K}_\infty(g|_{\{h = r\}}) = \emptyset$ for all $|r| > 0$ sufficiently small, then there are no sequences of the first type.
\item If $\widetilde{K}_\infty(g|_{\{h = r\}}) = \emptyset$ for all $|r|$ sufficiently large, then there are no sequences of the second type.
\end{enumerate}
\end{lemma}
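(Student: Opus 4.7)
The strategy for both parts is to argue by contradiction, applying Ekeland's variational principle on a closed half-space of the form $\{h \ge c\}$ and splitting into an interior sub-case (which will contradict $\widetilde{K}_\infty(g) = \emptyset$) and a boundary sub-case (which will contradict the hypothesis on $\widetilde{K}_\infty(g|_{\{h = r\}})$).

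For part (i), I suppose $\{x^k\}$ is a first-type sequence with $h(x^k) \ge \delta > 0$ after passing to a subsequence. Choose $r \in (0, \delta)$ small enough that the hypothesis gives $\widetilde{K}_\infty(g|_{\{h = r\}}) = \emptyset$, and set $X := \{x : h(x) \ge r\}$, a closed set containing every $x^k$. Observe that $g$ is nowhere zero on $X$, since $g(x) = 0$ would force $h(x) = 0$ via $g^{-1}(0) \subset h^{-1}(0)$, contradicting $x \in X$. Hence $\inf_X |g| = 0$ is unattained, and applying Theorem~\ref{th::ekeland} to $\phi := |g|$ on $X$ with $\epsilon_k := |g(x^k)|$ and $\lambda_k := \sqrt{\epsilon_k}$ yields $y^k \in X$ with $\|y^k - x^k\| \le \sqrt{\epsilon_k} \to 0$ (so $\|y^k\| \to \infty$), $0 < |g(y^k)| \le |g(x^k)| \to 0$, and the near-minimality condition $\phi(y^k) \le \phi(x) + \sqrt{\epsilon_k}\|x - y^k\|$ for every $x \in X$. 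After a further subsequence, either $h(y^k) > r$ for every $k$ or $h(y^k) = r$ for every $k$. In the interior case, testing Ekeland's inequality with $x = y^k \pm tv$ for an arbitrary unit vector $v$ and small $t > 0$ gives $\|\nabla|g|(y^k)\| \le \sqrt{\epsilon_k}$, whence $\|\nabla g(y^k)\| \to 0$ using $g(y^k) \ne 0$, and so $0 \in \widetilde{K}_\infty(g) = \emptyset$, a contradiction. In the boundary case, parameterizing $\{h = r\}$ near $y^k$ as a graph via the Implicit Function Theorem and testing Ekeland's inequality along the tangent space yields $\|\nabla(g|_{\{h = r\}})(y^k)\| \le \sqrt{\epsilon_k} \to 0$, and so $0 \in \widetilde{K}_\infty(g|_{\{h = r\}})$, contradicting the hypothesis for the chosen small $r$.

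Part (ii) is entirely parallel: setting $t := \lim g(x^k)$ along a subsequence for which $h(x^k) \to +\infty$ (the case $-\infty$ is symmetric), picking $R$ large enough that the hypothesis applies and $h(x^k) \ge R$ holds eventually, and applying Ekeland to $\phi := |g - t|$ on $X := \{h \ge R\}$, the interior case gives $t \in \widetilde{K}_\infty(g) = \emptyset$ and the boundary case gives $t \in \widetilde{K}_\infty(g|_{\{h = R\}}) = \emptyset$, each a contradiction. The principal technical nuisance, present only in (ii), is the degenerate sub-case $g(y^k) = t$ at which Ekeland's inequality is vacuous; this is not automatically excluded here because the clean analogue of the observation in (i) fails (the hypothesis does not force $\{g = t\}$ to be disjoint from $X$). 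One handles it by noting that in this sub-case $\{y^k\}$ lies in the unbounded semi-algebraic set $\{g = t\} \cap \{h \ge R\}$, and applying the Curve Selection Lemma at infinity (Lemma~\ref{CurveSelectionLemma}) together with a Growth Dichotomy analysis of $h$ along the selected curve to extract an asymptotic critical point of $g$ on a fixed large level set $\{h = R'\}$, or alternatively by perturbing the target value $t$ to a nearby one at which the degeneracy does not occur.
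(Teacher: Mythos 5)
Your part (i) is essentially the paper's proof: contradiction, Ekeland's principle (Theorem~\ref{th::ekeland}) on $X=\{h\ge r\}$, then the interior/boundary dichotomy contradicting $\widetilde{K}_\infty(g)=\emptyset$ resp.\ $\widetilde{K}_\infty(g|_{\{h=r\}})=\emptyset$; the only differences are cosmetic (you test Ekeland's inequality directly in a chart of $\{h=r\}$ where the paper invokes the Clarke--Lagrange multiplier rule, and you take $\lambda_k=\sqrt{\epsilon_k}$ instead of $\|x^k\|/2$, which works here since $\epsilon_k\to 0$). Two small omissions in (i): you must also treat the sub-case $h(x^k)\le-\delta$ (replace $h$ by $-h$; the hypothesis is symmetric in $\pm r$), and you must choose $r$ to be a regular value of $h$ (semi-algebraic Sard, as the paper does) so that $\{h=r\}$ is a smooth hypersurface and your Implicit Function Theorem step at the boundary points is legitimate.

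The genuine gap is in (ii), exactly at the degenerate sub-case $g(y^k)=t$ that you flag, and neither of your sketched repairs works as stated. Perturbing $t$ to a fixed $t'\ne t$ destroys the estimate you need: then $\epsilon_k=|g(x^k)-t'|$ no longer tends to $0$, so with $\lambda_k=\sqrt{\epsilon_k}$ the slope $\epsilon_k/\lambda_k$ stays bounded away from $0$ and no contradiction with the emptiness of the asymptotic critical value sets follows; moreover nothing guarantees a nearby value avoids the degeneracy, since $g$ may take every value near $t$ on unbounded subsets of $X$. As for the curve-selection repair: in the degenerate case the only information is that $\{g=t\}\cap\{h\ge R\}$ is unbounded; a curve to infinity inside this set carries no control on $\nabla g$ (only the tangential derivative of $g$ vanishes along it) and does not lie in any level set of $h$, so a Growth Dichotomy analysis of $h$ along it does not produce a sequence on a fixed level $\{h=R'\}$ with $\|\nabla(g|_{\{h=R'\}})\|\to 0$; turning this into a contradiction would require a genuinely different argument. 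The paper avoids the problem altogether: it applies Ekeland to $|g|$ itself (which is nonvanishing, hence smooth, on $X$ because $g^{-1}(0)\subset h^{-1}(0)$ and $|h|\ge M>0$ there), with $\epsilon=|g(x^k)|$ and $\lambda=\|x^k\|/2$, so that $\epsilon/\lambda=2|g(x^k)|/\|x^k\|\to 0$ merely because $g(x^k)$ is bounded and $\|x^k\|\to\infty$; since $\widetilde{K}_\infty(g)$ and $\widetilde{K}_\infty(g|_{\{h=M\}})$ are empty, it is irrelevant which finite limit $g(y^k)$ has, and boundedness of $g(y^k)$ suffices to pass to a convergent subsequence. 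The fix, then, is to decouple $\lambda$ from $\epsilon$ and drop the shift by $t$, not to patch the kink of $|g-t|$.
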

\begin{proof}
(i) By contradiction, assume that there exist a real number $\delta > 0$ and a sequence $x^k \in \mathbb{R}^n,$ with $\|x^k\| \to +\infty,$ such that
$$g(x^k) \to 0 \quad \textrm{ and } \quad |h(x^k)| \ge \delta.$$
Then $|g(x^k)| > 0$ since $g^{-1}(0) \subset h^{-1}(0).$ By the semi-algebraic Sard Theorem, the set of critical values of $h$ is a finite subset of $\mathbb{R}.$ So we can choose $\delta > 0$ sufficiently small so that each of the level sets $h^{-1}(\pm \delta)$ is either empty or a smooth hypersurface. Furthermore, by assumption, we can suppose that $\widetilde{K}_\infty(g|_{\{h = \pm \delta\}}) = \emptyset.$

Let 
$X := \{x \in \mathbb{R}^n \ : \  |h(x)| \ge \delta\}.$ We have
\begin{eqnarray*}
0 &=& \inf_{x \in X} |g(x)| \ < \ |g(x^k)| \quad \textrm{ for all } \quad k.
\end{eqnarray*}
Applying the Ekeland variational principle (Theorem~\ref{th::ekeland}) to the function 
$$X \rightarrow {\mathbb R}, \quad x \mapsto |g(x)|,$$ 
with data $\epsilon := |g(x^k)| > 0$ and $\lambda := \frac{\|x^k\|}{2} > 0,$ there is a point $y^k$ in $X$ such that the following inequalities hold
\begin{eqnarray*}
&&  |g(y^k)|  \le  |g(x^k)|, \\
&&  \|y^k - x^k\| \le \lambda,\\
&&  |g(y^k)|  \le |g(x)| + \frac{\epsilon}{\lambda} \| x - y^k\| \quad \textrm{ for all } \quad x \in X.
\end{eqnarray*}
We deduce easily that $\lim_{k \to \infty} \|y^k\| = +\infty$ and
$\lim_{k \to \infty} g(y^k) = 0.$ Furthermore, since $g^{-1}(0)
\subset h^{-1}(0)$ and $|h(y^k)| \ge \delta > 0,$ we have $g(y^k) \ne
0$ for all $k.$ Passing to a subsequence and replacing $g$ (resp., $h$)
by $-g$ (resp., $-h$) if necessary, we may assume that to all $k$ the following  conditions hold:
$g(y^k) > 0$ and either $h(y^k) > \delta$ or $h(y^k) = \delta.$ By continuity, $g$ and $h$ are positive in some open neighborhood of $y^k.$ In particular, we have for all $x$ near $y^k,$ 
\begin{eqnarray*}
|g(x)| = g(x) \quad \textrm{ and } \quad |h(x)| = h(x).
\end{eqnarray*}
Hence $y^k$ is a local minimizer of the function
\begin{eqnarray*}
\{x \in \mathbb{R}^n \ : \ h(x) \ge \delta \} \to \mathbb{R}, \quad x \mapsto g(x) + \frac{\epsilon}{\lambda} \| x - y^k\|.
\end{eqnarray*}
Observe that $h^{-1}(\delta)$ is a smooth hypersurface, the function $g$ is smooth and the function $x \mapsto \| x - y^k\|$ is locally Lipschitz. Therefore, by Lagrange's multipliers theorem (see \cite[Theorem~6.1.1]{Clarke1983}), there exists $\mu_k \le 0$ with $\mu_k (h(y^k) - \delta) = 0$ such that
\begin{eqnarray*}
0 & \in & \partial \left (g(\cdot) + \frac{\epsilon}{\lambda}(\|\cdot - y^k\|) \right) (y^k) + \mu_k \nabla h(y^k),
\end{eqnarray*}
where for a locally Lipschitz function $f \colon \mathbb{R}^n \to \mathbb{R},$ the notation $\partial f(x)$ denotes the Clarke derivative of $f$ at $x.$ Using the properties of the Clarke derivative (see \cite[Chapter~2]{Clarke1983}) we derive
\begin{eqnarray*}
\nabla g(y^k) + \mu_k \nabla h(y^k)  & \in & \frac{\epsilon}{\lambda} {\mathbb{B}^n},
\end{eqnarray*}
where $\mathbb{B}^n$ stands for the unit closed ball in $\mathbb{R}^n.$
Consequently, we get
\begin{eqnarray*}
\|\nabla g(y^k) + \mu_k \nabla h(y^k)\| & \le  &  \frac{\epsilon}{\lambda} \ = \ \frac{2|g(x^k)|}{\|x^k\|}.
\end{eqnarray*}
By letting $k$ tend to infinity, we obtain
\begin{equation*}
	\lim_{k \to \infty} \|y^k\| = + \infty, \quad \lim_{k \to \infty} g(y^k) = 0, \quad \textrm{ and } \quad \lim_{k \to \infty} \|\nabla g(y^k) + \mu_k \nabla h(y^k) \| = 0.
\end{equation*}
By passing to a subsequence if necessary, we can assume that either $h(y^k) > \delta$ for all $k$ or $h(y^k) = \delta$ for all $k$. 
For the former case, $\mu_k = 0$ for all $k$ and hence $0 \in
\widetilde{K}_\infty(g)$; for the latter case, $\|\nabla (g|_{\{h = \delta\}})(y^k)\| \to 0$ and thus $0 \in \widetilde{K}_\infty(g|_{\{h = \delta\}}).$ In both cases we get a contradiction to our assumption.

(ii) Suppose on the contrary that there exists a sequence $x^k \in
\mathbb{R}^n,$ with $\|x^k\| \to +\infty$ such that the sequence
$\{g(x^k)\}$ is bounded and $|h(x^k)| \to +\infty.$ Then $|g(x^k)| >
0$ from our assumption $g^{-1}(0) \subset h^{-1}(0).$ By the semi-algebraic Sard Theorem, the set of critical values of $h$ is a finite subset of
$\mathbb{R}.$ So we can choose $M > 0$ sufficiently large so that each
of the level sets $h^{-1}(\pm M)$ is either empty or a smooth
hypersurface. Furthermore, by assumption, we can suppose that
$\widetilde{K}_\infty(g|_{\{h = \pm M\}}) = \emptyset.$

Let 
$X := \{x \in \mathbb{R}^n \ : \  |h(x)| \ge M \}.$ We have for all $k$ sufficiently large,
\begin{eqnarray*}
&& \inf_{x \in X} |g(x)| \ \le \ |g(x^k)|.
\end{eqnarray*}
By applying the Ekeland variational principle (Theorem~\ref{th::ekeland}) to the function $X \rightarrow \mathbb{R}, x \mapsto |g(x)|,$ with data $\epsilon := |g(x^k)| > 0$ and $\lambda := \frac{\|x^k\|}{2} > 0,$ we get a point $y^k$ in $X$ satisfying the following inequalities 
\begin{eqnarray*}
&&  |g(y^k)|  \le  |g(x^k)|, \\
&&  \|y^k - x^k\| \le \lambda,\\
&&  |g(y^k)|  \le |g(x)| + \frac{\epsilon}{\lambda} \| x - y^k\| \quad \textrm{ for all } \quad x \in X.
\end{eqnarray*}
We deduce easily that 
\begin{eqnarray*}
\frac{\|x^k\|}{2} &\le& \|y^k\|  \ \le \ \frac{3 \|x^k\|}{2},
\end{eqnarray*}
which yields $\lim_{k \to \infty} \|y^k\| = +\infty.$ 

Similarly to (i), for $k$ large enough, we can assume that $h(y^k) \ge
M > 0$ and $g(y^k) > 0$ since $g^{-1}(0) \subset h^{-1}(0)$. Hence, repeating arguments similar to (i), we have
\begin{eqnarray*}
\|\nabla g(y^k) + \mu_k \nabla h(y^k)\| & \le  &  \frac{\epsilon}{\lambda} \ = \ \frac{2|g(x^k)|}{\|x^k\|}
\end{eqnarray*}
for some $\mu_k \le 0$ with $\mu_k(h(y^k) - M) = 0.$ Hence,
\begin{eqnarray*}
\lim_{k \to \infty} \|\nabla g(y^k) + \mu_k \nabla h(y^k)\| &=& 0.
\end{eqnarray*}

On the other hand, since the sequence $\{g(x^k)\}$ is bounded, so is
the sequence $\{g(y^k)\}.$ Hence, by passing to a subsequence if
necessary, we may assume the existence of the limit $t := \lim_{k \to
\infty}  g(y^k).$ Furthermore, we can assume that either $h(y^k) > M$ for all $k$ or $h(y^k) = M$ for all $k.$ 
For the former case, $\mu_k = 0$ for all $k$ and hence $t \in \widetilde{K}_\infty(g)$; for the latter case, $\|\nabla (g|_{\{h = M\}})(y^k)\| \to 0$ and thus $t \in \widetilde{K}_\infty(g|_{\{h = M\}}).$ In both cases we get a contradiction to our assumption.
\end{proof}

\begin{lemma}\label{Lemma33}
Let $g, h \colon \mathbb{R}^n \rightarrow \mathbb{R}$ be polynomial functions such that $g^{-1}(0) \subset h^{-1}(0).$ The following two conditions are equivalent:
\begin{enumerate}[{\rm (i)}]
\item there are no sequences of the first and second types.
\item there exist some constants $c > 0, \alpha > 0,$ and $\beta > 0$ such that
\begin{eqnarray*}
|g(x)|^{\alpha}  + |g(x)|^{\beta}   &\ge& c|h(x)| \quad \textrm{ for all } \quad x \in \mathbb{R}^n.
\end{eqnarray*}
\end{enumerate}
\end{lemma}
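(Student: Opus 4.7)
The implication (ii) $\Rightarrow$ (i) is immediate: a sequence of the first type would drive $|g(x^k)|^\alpha + |g(x^k)|^\beta$ to $0$ while keeping $c|h(x^k)| \ge c\delta > 0$, and a sequence of the second type would keep the left-hand side bounded while forcing $c|h(x^k)| \to \infty$---both contradictions.

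For (i) $\Rightarrow$ (ii), the plan is to study the auxiliary function
$$\Psi(t) := \sup\{|h(x)| \ : \ x \in \mathbb{R}^n, \ |g(x)| \le t\}, \quad t \ge 0,$$
which is non-decreasing and semi-algebraic (by Theorem~\ref{TarskiSeidenbergTheorem1}), with $\Psi(0)=0$ because $g^{-1}(0) \subset h^{-1}(0)$. The first step is to show $\Psi(t) < \infty$ for every $t \ge 0$: otherwise a sequence $\{x^k\}$ with $|g(x^k)| \le t$ and $|h(x^k)| \to \infty$ would have to satisfy $\|x^k\| \to \infty$ (as $h$ is bounded on bounded sets), giving a sequence of the second type. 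The second step is to show $\lim_{t \to 0^+} \Psi(t) = 0$. Otherwise one extracts $\delta > 0$ and $x^k$ with $|g(x^k)| \to 0$ and $|h(x^k)| \ge \delta$; if $\{x^k\}$ is bounded, a subsequential limit lies in $g^{-1}(0) \setminus h^{-1}(0)$, contradicting the hypothesis, while if $\|x^k\| \to \infty$ one obtains a sequence of the first type.

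At this stage $\Psi \colon [0,\infty) \to [0,\infty)$ is a finite, non-decreasing, semi-algebraic function, continuous at $0$. Applying the Growth Dichotomy Lemma~\ref{GrowthDichotomyLemma} at the origin yields some $\alpha > 0$ and $C_1 > 0$ with $\Psi(t) \le C_1 t^\alpha$ on $(0,\tau_0]$; applying it at infinity (via $s = 1/t$) gives some $\beta > 0$ and $C_2 > 0$ with $\Psi(t) \le C_2 t^\beta$ on $[\tau_1,\infty)$ (the degenerate cases where $\Psi$ vanishes near $0$ or is bounded near infinity are handled by choosing any positive exponent). On the intermediate compact interval $[\tau_0, \tau_1]$, monotonicity gives $\Psi(t) \le \Psi(\tau_1) < \infty$, which is absorbed into $C(t^\alpha + t^\beta)$ by taking $C$ sufficiently large (for instance $C \ge \Psi(\tau_1)/\tau_0^\alpha$). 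Patching the three regimes yields a single constant $C > 0$ such that $\Psi(t) \le C(t^\alpha + t^\beta)$ for all $t \ge 0$, and substituting $t = |g(x)|$ (with the case $g(x) = 0$ trivial, since then $h(x) = 0$) produces (ii).

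The most delicate points will be the finiteness of $\Psi$ and its continuity at $0$, for these are where the two forbidden sequence types and the inclusion $g^{-1}(0) \subset h^{-1}(0)$ all come into play simultaneously; once these are secured, the Growth Dichotomy Lemma packages the remaining asymptotic information into the clean polynomial bound almost mechanically.
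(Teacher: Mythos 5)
Your proposal is correct and follows essentially the same route as the paper: the paper defines $\mu(t) := \sup_{|g(x)| = t} |h(x)|$, applies the Growth Dichotomy Lemma at $0$ and at infinity, and treats the intermediate range of $|g(x)|$ separately, exactly as you do with your sublevel-set function $\Psi$. Your use of sublevel sets instead of level sets is only a harmless refinement: it makes the auxiliary function monotone (so the middle regime is absorbed by monotonicity rather than by invoking condition (i) directly) and sidesteps the paper's case distinction on whether $g^{-1}(0)$ is empty.
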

\begin{proof}(Cf. \cite[Theorem~3.4]{HaHV2017}.)

(ii) $\Rightarrow$ (i): The implication is straightforward.

(i) $\Rightarrow$ (ii): We assume that $h\not\equiv 0$, otherwise the
implication is trivial.
We only consider the case where $g^{-1}(0) \ne \emptyset;$ the case $g^{-1}(0) = \emptyset$ follows similarly. Then for each $t \ge 0,$ the set $\{x \in \mathbb{R}^n \ : \ |g(x)| = t\}$ is non-empty. This, together with condition~(i), implies that the (semi-algebraic) function $\mu \colon [0, +\infty) \to \mathbb{R}$ given by
\begin{eqnarray*}
\mu(t)  &:=& \sup_{|g(x)| = t} |h(x)|
\end{eqnarray*}
is well-defined. Furthermore, $\mu(0) = 0,$
$\mu(t) > 0$ for all $t > 0$ small enough and $\mu(t) \to +\infty$ as $t \to +\infty.$ By  Lemma~\ref{GrowthDichotomyLemma}, we
can write 
\begin{eqnarray*}
\mu(t)  &=& a t^{\alpha} + o(t^\alpha)  \quad \textrm{ as } \quad t \to 0^+,\\
\mu(t)  &=& b t^{\beta} + o(t^\beta)  \quad \textrm{ as } \quad t \to +\infty
\end{eqnarray*}
for some constants $a \ne 0, b \ne 0, \alpha \ge 0$ and $\beta > 0.$ 
Therefore, we can find constants $c_1 > 0, c_2 > 0, \delta > 0$ and $r > 0$ with $\delta \ll 1 \ll r$ such that the following inequalities hold
\begin{eqnarray*}
|g(x)|^{\alpha} &\ge& c_1|h(x)| \quad \textrm{ for } \quad 0 < |g(x) | \le \delta, \\
|g(x)|^{\beta} &\ge& c_2|h(x)| \quad \textrm{ for } \quad |g(x) | \ge r.
\end{eqnarray*}
By assumption, we may assume that $\alpha > 0$ so that the first inequality holds for $|g(x)| \le \delta.$ Furthermore, we also may assume $\alpha \le 1 \le \beta$ because $\delta$ is sufficiently small and $r$ is sufficiently large.

On the other hand, it follows easily from condition~(i) that there exists a constant $M > 0$ such that for all $x \in \mathbb{R}^n$ with $\delta \le |g(x)| \le R$ we have $|h(x)| \le M$ and hence 
\begin{eqnarray*}
|g(x)|^{\alpha} + |g(x)|^{\beta} 
& \ge & \delta^\alpha + \delta^\beta \ = \ \frac{\delta^\alpha + \delta^\beta}{M}M 
\ \ge \ \frac{\delta^\alpha + \delta^\beta}{M}\, |h(x)|.
\end{eqnarray*}
Letting $c := \min\{c_1, c_2, \frac{\delta^\alpha + \delta^\beta}{M}\},$ we get the desired conclusion.
\end{proof}

We now are in position to finish the proof of Theorem~\ref{Theorem11}.

\begin{proof}[Proof of Theorem~\ref{Theorem11}]
This is a direct consequence of  Lemmas~\ref{Lemma31-1}, \ref{Lemma31-2}, \ref{Lemma32}, and \ref{Lemma33}.
\end{proof}

The following corollary is inspired by Theorems~1.1 and 1.3 in \cite{Fernando2014-2}.
\begin{corollary}
Under the assumptions of Theorem~\ref{Theorem11}, there exist a positive integer $N$ and a continuous semi-algebraic function $f \colon \mathbb{R}^n \to \mathbb{R}$ such that $h^N = g f.$
\end{corollary}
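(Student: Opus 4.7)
My plan is to construct $f$ explicitly as the natural quotient $h^N/g$, extended by zero on the zero set of $g$, and to invoke Theorem~\ref{Theorem11} to verify that this extension is continuous. The integer $N$ will be chosen large enough that the \L ojasiewicz exponents compensate for the division by $g$.

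Concretely, by Theorem~\ref{Theorem11} there exist constants $c, \alpha, \beta > 0$ such that
\[
|h(x)| \le \tfrac{1}{c}\bigl(|g(x)|^{\alpha} + |g(x)|^{\beta}\bigr) \quad \text{for all } x \in \mathbb{R}^n.
\]
Set $\gamma := \min\{\alpha,\beta\} > 0$ and fix any positive integer $N$ with $N\gamma > 1$. Define
\[
f(x) :=
\begin{cases}
h(x)^N / g(x), & g(x) \ne 0, \\
0, & g(x) = 0.
\end{cases}
\]
The identity $h(x)^N = g(x) f(x)$ is immediate on $\{g \ne 0\}$, and on $g^{-1}(0) \subset h^{-1}(0)$ both sides vanish. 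The graph of $f$ is cut out from $\mathbb{R}^n \times \mathbb{R}$ by the semi-algebraic conditions $\{(x,y) : g(x) \ne 0,\ g(x)\, y = h(x)^N\} \cup \{(x,0) : g(x) = 0\}$, so $f$ is semi-algebraic.

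The essential step is continuity at an arbitrary $x^0 \in g^{-1}(0)$; smoothness on the open complement is automatic. In a small neighborhood $U$ of $x^0$ one has $|g(x)| \le 1$, so the \L ojasiewicz bound simplifies to $|h(x)| \le \tfrac{2}{c}\, |g(x)|^{\gamma}$ on $U$. Raising to the power $N$ and dividing by $|g(x)|$ yields, for $x \in U$ with $g(x) \ne 0$,
\[
|f(x)| \le \tfrac{2^N}{c^N}\, |g(x)|^{N\gamma - 1}.
\]
Since $N\gamma - 1 > 0$ and $g(x) \to 0$ as $x \to x^0$, we conclude that $f(x) \to 0 = f(x^0)$, which establishes continuity.

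I do not foresee any genuine obstacle: the construction and the continuity estimate both rest directly on the global \L ojasiewicz inequality already proved in Theorem~\ref{Theorem11}. The only point to note is that the exponents $\alpha, \beta$ furnished by that theorem are strictly positive, which ensures that a positive integer $N$ with $N \gamma > 1$ exists and makes the argument go through.
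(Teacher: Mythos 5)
Your proof is correct and follows essentially the same route as the paper: you define $f$ as $h^N/g$ extended by zero on $g^{-1}(0)$ and use the \L ojasiewicz inequality of Theorem~\ref{Theorem11} (localized near the zero set of $g$, where $|g|\le 1$) to choose $N$ with $N\gamma>1$ so that the quotient tends to zero, which is exactly the exponent bookkeeping the paper performs with $\ell=[1/\alpha]+1$ and the auxiliary function $f_0=h^{2\ell}/g^2$, $f=f_0g$. The only differences are cosmetic (the paper extracts the local bound from the proof of Lemma~\ref{Lemma33} and treats the case $\inf|g|>0$ separately), so no further changes are needed.
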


\begin{proof}
Clearly, if $\inf_{x \in \mathbb{R}^n} |g(x)| > 0$ then the integer $N := 1$ and the function $f := \frac{h}{g}$ have the desired property. So assume that $\inf_{x \in \mathbb{R}^n} |g(x)| = 0.$  By observing the proof of Lemma~\ref{Lemma33}, we can find positive constants $\alpha$ and $\delta$ with $\alpha \le 1$ such that
\begin{eqnarray*}
|g(x)|^{\alpha} &\ge& c|h(x)| \quad \textrm{ for } \quad |g(x)| \le \delta.
\end{eqnarray*}

Let $\ell := \left[\frac{1}{\alpha}\right] + 1 > \frac{1}{\alpha} \ge
1.$ The following function $f_0 \colon \mathbb{R}^n \to \mathbb{R}$ defined by
\begin{eqnarray*}
f_0(x) &:=&
\begin{cases}
\frac{h^{2\ell}(x)}{g^2(x)} & \textrm{ if } g(x) \ne 0,\\
0  & \textrm{ otherwise}
\end{cases}
\end{eqnarray*}
is continuous and semi-algebraic. Since $f_0g^2 = h^{2 \ell},$ we deduce that the integer $N := 2\ell$ and the function $f := f_0 g$ have the desired property.
\end{proof}

\section{Genericity of non-degenerate at infinity polynomial mappings}\label{SectionGenericity}

In this section we show the genericity of the condition of
non-degeneracy at infinity for real polynomial mappings (Theorem \ref{Theorem12}); actually, we will prove a stronger result (see Theorem~\ref{OpenDenseTheorem} below). Note that the genericity of the condition of non-degeneracy for complex polynomial mappings has been given in \cite{Kouchnirenko1976} (the case $p = 1$) and in \cite[Theorem (Resolution of Singularities)]{Khovanskii1978} and \cite[Corollary~3.2.1]{Oka1997} (the case $p \ge 1$).

For simplicity, we introduce some notation here for this section. Let
$\Gamma := (\Gamma_1, \ldots, \Gamma_p)$ with $1 \le p \le n$ and each $\Gamma_i$ being a Newton polyhedron in $\mathbb R_+^n.$
Let
\begin{eqnarray*}
\mathcal{F} &:=& \{\Delta := (\Delta_1,\ldots,\Delta_p) \ : \ \exists q \in \mathbb{R}^n \textrm{ s.t. } \Delta_i = \Delta(q,\Gamma_i) \textrm{ for all } i\}. 
\end{eqnarray*}
Clearly, $\mathcal{F}$ is a finite set as the number of faces of a polyhedron is finite.

For each $i=1,\ldots,p$, let
$\mathcal{Z}_i:=\Gamma_i\cap\mathbb{Z}^n$ and $m_i :=
\#\mathcal{Z}_i$-the number of points in the set $\mathcal{Z}_i.$ 
For each polynomial mapping from $\mathbb R^n$ to $\mathbb R^p$ such that the Newton polyhedra of its components are given by $\Gamma$, we can index the coefficients of the $i\text{th}$ component over the integer points of $\Gamma_i.$
So let $c_i = (c_{i, \kappa})_{\kappa\in\mathcal{Z}_i}$,
$$f_i (x, c_i) := \sum_{\kappa\in\mathcal{Z}_i}c_{i, \kappa} x^\kappa\in {\mathbb R}[x]\ \text{ and }\ F(x,c):=(f_1(x,c_1),\dots,f_p(x,c_p)).$$
Denote by $\mathbb{R}^{\Pi}$ the product space ${\mathbb{R}}^{m_1} \times \cdots \times {\mathbb{R}}^{m_p}$.
For any $c := (c_1, \ldots, c_p)  \in \mathbb{R}^{\Pi}$, set 
\begin{eqnarray*}
\mathcal{G}(c):=\left(\Gamma(f_1 (x, c_1)), \ldots, \Gamma(f_p (x, c_p))\right).
\end{eqnarray*}
For each nonempty set $I := \{i_1, \ldots, i_s\}\subset\{1, \ldots, p\}$ and $\Delta := (\Delta_1,\ldots,\Delta_p)\in\mathcal{F},$ we let
\begin{eqnarray*}
F_{I, \Delta}(x, c) &:=& (f_{i_1, \Delta_{i_1}}(x, c_{i_1}), \ldots, f_{i_s, \Delta_{i_s}}(x, c_{i_s})), \\
x D F_{I, \Delta}(x, c) &:=& \left (x_j \frac{\partial f_{i,
\Delta_i}}{\partial x_j}(x, c_i) \right)_{i \in I,\ j=1,\ldots,n},\\
\mathcal{V}(I, \Delta, c) &:=& \left\{x\in(\mathbb{R}^*)^n : F_{I, \Delta}(x,
c) = 0\right\},\\
\mathcal{V}_{\text{\upshape reg}}(I, \Delta, c) &:=& \left\{x\in \mathcal V(I,
	\Delta, c) : \textrm{rank}(xDF_{I, \Delta}(x,c)) = \#I \right\},\\
	\mathcal {D}_{I}(\Delta) &:=& \left\{c\in\mathbb{R}^{\Pi}\ :\  \mathcal{G}(c)=\Gamma,\	
\mathcal{V}(I, \Delta, c)=\mathcal{V}_{\text{\upshape reg}}(I, \Delta,
c) \right\}.
\end{eqnarray*}

With the notation above, Theorem \ref{Theorem12} is a direct consequence of the following.
\begin{theorem} \label{OpenDenseTheorem}
The set $\cap_{I, \Delta} \mathcal{D}_I(\Delta)$ is an open dense
semi-algebraic set in $\mathbb{R}^{\Pi},$
where the intersection is taken over all nonempty sets $I \subset \{1, \ldots, p\}$ and all $\Delta \in \mathcal{F}.$
\end{theorem}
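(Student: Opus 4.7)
The plan is to treat each pair $(I,\Delta)$ separately via Theorem~\ref{SardTheorem} and then intersect over the finite index set $2^{\{1,\dots,p\}} \times \mathcal{F}$. I first observe that $\{c \in \mathbb{R}^\Pi : \mathcal{G}(c) = \Gamma\}$ is an open, dense, semi-algebraic subset of $\mathbb{R}^\Pi$: the equality $\mathcal{G}(c) = \Gamma$ is equivalent to requiring that, for each $i$, the coefficient of $f_i(\cdot, c_i)$ at every vertex of $\Gamma_i$ be nonzero, so its complement is a finite union of coordinate hyperplanes in $\mathbb{R}^\Pi$.

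Fix a nonempty $I = \{i_1,\ldots,i_s\} \subseteq \{1,\ldots,p\}$ and $\Delta \in \mathcal{F}$, and consider the $C^\infty$ semi-algebraic map
\[
\Phi : (\mathbb{R}^*)^n \times \mathbb{R}^\Pi \to \mathbb{R}^s, \qquad (x, c) \mapsto F_{I,\Delta}(x, c).
\]
The key claim is that $\Phi$ is a submersion, and in particular $\Phi \pitchfork \{0\}$. Indeed, for each $i \in I$ pick a lattice point $\kappa^{(i)} \in \Delta_i \cap \mathbb{Z}^n$ (such a point exists because $\Delta_i$ is a nonempty face of the integer polyhedron $\Gamma_i$); then
\[
\frac{\partial \Phi}{\partial c_{i,\kappa^{(i)}}}(x,c) \;=\; x^{\kappa^{(i)}}\, \mathbf{e}_i,
\]
where $\mathbf{e}_i$ is the standard basis vector in $\mathbb{R}^s$ attached to the row $i$. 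Since $x^{\kappa^{(i)}} \neq 0$ on $(\mathbb{R}^*)^n$, these $s$ vectors span $\mathbb{R}^s$, so $\Phi$ is even a submersion. Theorem~\ref{SardTheorem} then yields an open, dense, semi-algebraic subset $U_{I,\Delta} \subseteq \mathbb{R}^\Pi$ such that for every $c \in U_{I,\Delta}$ the restriction $F_{I,\Delta}(\cdot, c)$ is transverse to $\{0\}$ on $(\mathbb{R}^*)^n$.

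Transversality translates into exactly the required condition: either $\mathcal{V}(I,\Delta,c) = \emptyset$, or at every $x \in \mathcal{V}(I,\Delta,c)$ the ordinary Jacobian $(\partial f_{i,\Delta_i}/\partial x_j)_{i \in I,\, 1 \le j \le n}$ has full row rank $s$; rescaling the $j$-th column by the nonzero $x_j$ does not change the rank, so this is equivalent to $\rank(x\,DF_{I,\Delta}(x,c)) = s$. Hence $U_{I,\Delta} \cap \{c : \mathcal{G}(c) = \Gamma\} \subseteq \mathcal{D}_I(\Delta)$. As $\mathcal{F}$ and the family of nonempty $I$ are finite, the intersection $\bigcap_{I,\Delta}\bigl(U_{I,\Delta} \cap \{c : \mathcal{G}(c) = \Gamma\}\bigr)$ is an open, dense, semi-algebraic subset of $\mathbb{R}^\Pi$ contained in $\bigcap_{I,\Delta}\mathcal{D}_I(\Delta)$, and Tarski--Seidenberg (Theorem~\ref{TarskiSeidenbergTheorem1}) gives semi-algebraicity of the latter.

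The main obstacle is upgrading this to show $\bigcap_{I,\Delta}\mathcal{D}_I(\Delta)$ is itself open, not merely a superset of an open dense set. For this I would exploit the weighted homogeneity of the face polynomials: each $f_{i,\Delta_i}$ is weighted homogeneous of type $(q, d_i)$ for any $q$ with $\Delta_i = \Delta(q,\Gamma_i)$, so the rank-deficiency locus in $(\mathbb{R}^*)^n$ is invariant under the one-parameter group $t \cdot x = (t^{q_j} x_j)_{j=1}^n$, $t > 0$. When $q \neq 0$ this invariance permits replacing the bad set by its intersection with a compact semi-algebraic transversal slice, so the projection onto the $c$-coordinate becomes a closed map and the complement of $\mathcal{D}_I(\Delta)$ in $\{\mathcal{G}(c)=\Gamma\}$ is closed; the degenerate case $q = 0$ (where $\Delta = \Gamma$) is then handled by a direct argument using the semi-algebraic Sard theorem and the lower semi-continuity of the rank function of the matrix entries.
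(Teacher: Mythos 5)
Your first half is correct, and it is in fact a cleaner route to the density statement than the paper's: you apply the parametric transversality theorem directly to $(x,c)\mapsto F_{I,\Delta}(x,c)$, using that each coefficient $c_{i,\kappa^{(i)}}$ with $\kappa^{(i)}\in\Delta_i\cap\mathbb{Z}^n$ contributes an independent partial derivative $x^{\kappa^{(i)}}\mathbf{e}_i\neq 0$, and you recover the rank condition on $xDF_{I,\Delta}$ from transversality of $F_{I,\Delta}(\cdot,c)$ to $\{0\}$ (rescaling columns by the nonzero $x_j$). Together with the observation that $\{c:\mathcal{G}(c)=\Gamma\}$ is the complement of the coordinate hyperplanes attached to the vertices, this gives that $\cap_{I,\Delta}\mathcal{D}_I(\Delta)$ is semi-algebraic and \emph{contains} an open dense semi-algebraic set. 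The paper's Proposition~\ref{Proposition42} reaches the same conclusion by parametrizing the larger map $(x\nabla f_{i,\Delta_i},f_{i,\Delta_i})$, which forces the combinatorial rank condition on vertex differences, an induction on $p$, and a monomial change of coordinates when $\dim(\Gamma_1+\cdots+\Gamma_p)<n$; your shortcut avoids all of that.

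The genuine gap is the openness half, which you only sketch, and the sketch would not go through. You propose to show, for each \emph{fixed} pair $(I,\Delta)$, that the bad set $\{c:\mathcal{G}(c)=\Gamma,\ \mathcal{V}(I,\Delta,c)\neq\mathcal{V}_{\mathrm{reg}}(I,\Delta,c)\}$ is closed in $\{\mathcal{G}(c)=\Gamma\}$, by using the weighted homogeneity of the $f_{i,\Delta_i}$ to cut the bad locus down to a compact transversal slice and conclude that the projection to $c$-space is a closed map. The one-parameter group $t\cdot x=(t^{q_1}x_1,\ldots,t^{q_n}x_n)$ gives only a single degree of freedom: coordinates with $q_j=0$ are not moved at all, and even along the moving coordinates the bad locus modulo the action need not stay away from the coordinate hyperplanes or from infinity, so no compact slice meets every orbit and properness of the projection does not follow. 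More fundamentally, what you are trying to prove is stronger than what is true: if $c^k\to c^0$ with witnesses $x^k\in(\mathbb{R}^*)^n$ of degeneracy for $\Delta$, the witnesses can escape (some coordinates tending to $0$ or $\infty$), and in the limit the degeneracy at $c^0$ is detected only on a \emph{smaller} face tuple $\widetilde\Delta=(\Delta(q,\Delta_1),\ldots,\Delta(q,\Delta_p))\in\mathcal{F}$, where $q$ records the escape rates. This is precisely why the paper's Proposition~\ref{Proposition41} proves closedness only of the union over all $\Delta\in\mathcal{F}$ of the bad sets (for each fixed $I$), via the Curve Selection Lemma applied at $c^0$, a Puiseux expansion of the witness curve, and a verification that $\widetilde\Delta\in\mathcal{F}$. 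Your fallback for the case $q=0$ (i.e.\ $\Delta=\Gamma$) via "lower semicontinuity of the rank" does not address this either: there the difficulty is not the rank at a fixed point but the existence of a witness point, which can diverge, and there is no scaling to exploit. So the openness of $\cap_{I,\Delta}\mathcal{D}_I(\Delta)$ still requires an argument of the paper's type (closedness of the bad set only after taking the union over all of $\mathcal{F}$), and cannot be established face-by-face as proposed.
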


\begin{proof}
Observe that the number of subsets of $\{1, \ldots, p\}$ is finite, $\mathcal{F}$ is a finite set, and a finite intersection of open dense semi-algebraic sets is open dense semi-algebraic. Now the desired conclusion follows
immediately from Propositions~\ref{Proposition41}~and~\ref{Proposition42} below.
\end{proof}

\begin{proposition} \label{Proposition41}
For each nonempty set $I \subset \{1, \ldots, p\},$ the set $\cap_{\Delta \in \mathcal{F}} \mathcal{D}_I(\Delta)$ is open and semi-algebraic.
\end{proposition}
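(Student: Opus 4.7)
My plan is to handle semi-algebraicity by Tarski--Seidenberg and to prove openness by a Puiseux-expansion argument along a curve in the complement, in the spirit of the proofs of Lemmas~\ref{Lemma31-1} and~\ref{Lemma31-2}.

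For semi-algebraicity, the conditions defining each $\mathcal{D}_I(\Delta)$---namely $\mathcal{G}(c) = \Gamma$ (the open Boolean combination of (in)equalities that asks the coefficient at every vertex of each $\Gamma_i$ to be nonzero) and the universally quantified implication ``for all $x \in (\mathbb R^*)^n$, $F_{I,\Delta}(x,c) = 0$ implies $\mathrm{rank}(xDF_{I,\Delta}(x,c)) = \#I$'' (with the rank condition expressed as the non-vanishing of at least one $\#I \times \#I$ minor determinant)---are first-order formulas over $\mathbb R$. Theorem~\ref{TarskiSeidenbergTheorem1} then yields semi-algebraicity of each $\mathcal{D}_I(\Delta)$, and hence of the finite intersection.

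For openness, I would argue by contradiction. Assume $c^0 \in \bigcap_{\Delta} \mathcal{D}_I(\Delta)$ lies in the closure of the complement. Since $\mathcal{F}$ is finite, some subsequence yields $c^k \to c^0$ with $c^k \notin \mathcal{D}_I(\Delta)$ for a fixed $\Delta \in \mathcal{F}$. Because $\mathcal{G}(c) = \Gamma$ is an open condition at $c^0$, the failure at large $k$ must come from some witness $x^k \in (\mathbb R^*)^n$ with $F_{I,\Delta}(x^k,c^k) = 0$ and $\mathrm{rank}(x^k DF_{I,\Delta}(x^k,c^k)) < \#I$. If $\{x^k\}$ has a subsequence converging inside $(\mathbb R^*)^n$, then continuity of the polynomials together with lower semicontinuity of rank produces a witness at $c^0$, contradicting $c^0 \in \mathcal{D}_I(\Delta)$. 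Otherwise $\|x^k\| \to \infty$ or some coordinate $|x^k_j| \to 0$; embedding $(\mathbb R^*)^n \hookrightarrow \mathbb R^{2n}$ via $x \mapsto (x, 1/x)$ and invoking Lemma~\ref{CurveSelectionLemma} promotes this to an analytic semi-algebraic arc $(x(t), c(t))$, $t \in (0, \epsilon)$, with $x(t) \in (\mathbb R^*)^n$, $c(t) \to c^0$, $F_{I,\Delta}(x(t),c(t)) \equiv 0$, and $\mathrm{rank}(x(t) DF_{I,\Delta}(x(t),c(t))) < \#I$ throughout.

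The concluding step is to extract from this arc a limit face $\tilde{\Delta} \in \mathcal{F}$ at which non-degeneracy still fails for $c^0$. By Lemma~\ref{GrowthDichotomyLemma}, $x_j(t) = a_j t^{\alpha_j} + o(t^{\alpha_j})$ with $a_j \neq 0$; set $a := (a_1,\ldots,a_n) \in (\mathbb R^*)^n$ and $\alpha := (\alpha_1,\ldots,\alpha_n)$. Picking any $q$ realizing $\Delta$, the perturbation $q + \varepsilon\alpha$ for sufficiently small $\varepsilon > 0$ realizes a face tuple $\tilde{\Delta} = (\tilde{\Delta}_1,\ldots,\tilde{\Delta}_p) \in \mathcal{F}$ with $\tilde{\Delta}_i = \Delta(\alpha, \Delta_i) \subseteq \Delta_i$. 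Expanding $f_{i,\Delta_i}(x(t), c_i(t))$ in $t$, the leading coefficient equals $f_{i,\tilde{\Delta}_i}(a, c_i^0)$, which must vanish for each $i \in I$; a parallel expansion of the entries of $x(t) DF_{I,\Delta}(x(t), c(t))$ together with a row rescaling by powers of $t$ (which preserves rank) shows that this matrix tends to $a DF_{I, \tilde{\Delta}}(a, c^0)$, whose rank must then also be $< \#I$ by lower semicontinuity. Hence $a \in (\mathbb R^*)^n$ witnesses $c^0 \notin \mathcal{D}_I(\tilde{\Delta})$, contradicting the hypothesis. I expect the main obstacle to be exactly this final step---converting the asymptotics of a witness that escapes the torus $(\mathbb R^*)^n$ into a genuine face of $\Gamma$ belonging to the finite set $\mathcal{F}$---and the $q + \varepsilon\alpha$ perturbation is the device that resolves it.
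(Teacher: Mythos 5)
Your proposal is correct and follows essentially the same route as the paper's proof: Tarski--Seidenberg for semi-algebraicity, then a curve-selection/Puiseux argument whose key step is exactly the paper's perturbation device (your $q+\varepsilon\alpha$, the paper's $\widetilde q = q^0 + \epsilon\, q/\|q\|$) showing that the limit faces $\Delta(\alpha,\Delta_i)$ form a tuple in $\mathcal{F}$, after which vanishing of the leading coefficients and lower semicontinuity of rank produce the contradicting witness. The only differences are presentational: you argue by contradiction at a point of the intersection, splitting bounded witnesses (continuity plus rank semicontinuity) from escaping ones (via the embedding $x \mapsto (x,1/x)$ and the Curve Selection Lemma at infinity), whereas the paper shows the complement $W=\pi(V^*)$ is closed through a single application of curve selection.
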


\begin{proof} (Cf. \cite[Appendix]{Oka1979}; see also \cite[Proposition~3.1]{Chen2012}.)
Let $I$ be a nonempty subset of $\{1, \ldots, p\}.$ By renumbering, we may assume that $I = \{1, \ldots, s\}$ for some $s \le p.$ By definition, for any $(\Delta_1,\ldots,\Delta_p) \in\mathcal{F}$ we have
\begin{eqnarray*}
\mathcal {D}_{I}(\Delta_1,\ldots,\Delta_p) &=& \mathcal {D}_{I}(\Delta_1, \ldots, \Delta_s) \times X
\end{eqnarray*}
where
$$X := \{(c_{s + 1}, \ldots, c_p) \ : \ \Gamma(f_i(x, c_i)) =
\Gamma_i \textrm{ for } i = s + 1, \ldots, p\}.$$
Observe that $X$ is
an open dense semi-algebraic subset of
$\mathbb{R}^{m_{s+1}}\times\cdots\times\mathbb{R}^{m_p}$ and that $X$
does not depend on the polyhedra $\Gamma_i$ for $i = 1, \ldots, s.$ Hence,
it suffices to show that
$$\cap_{(\Delta_1, \ldots, \Delta_s)} \mathcal{D}_I(\Delta_1, \ldots, \Delta_s)$$
is an open semi-algebraic subset of ${\mathbb{R}}^{m_1} \times \cdots \times
{\mathbb{R}}^{m_s}.$ In other words, we can assume that $s = p,$ i.e.,
$I=\{1, \ldots, p\}.$

Consider the projection
$$\pi \colon {\mathbb R}^n  \times \mathbb{R}^{\Pi}  \rightarrow
{\mathbb{R}}^{\Pi}, \quad (x, c) \mapsto c,$$
and the union $V^* := \cup_{\Delta \in \mathcal{F}}V(\Delta)$ where
$$V(\Delta) := \left\{
(x, c)\in {\mathbb R}^n  \times \mathbb{R}^{\Pi}
\ : \ \mathcal{G}(c)=\Gamma,\ x\in\mathcal{V}(I, \Delta, c)\setminus\mathcal{V}_{\text{\upshape reg}}(I,
\Delta, c)\right\}.$$
By definition, $W := \pi(V^*)$ is the complement of $\cap_{\Delta \in
\mathcal{F}} \mathcal{D}_I(\Delta)$ in the set
$\{c\in\mathbb{R}^{\Pi}\ :\  \mathcal{G}(c)=\Gamma\}.$
Observe that the latter set is an open dense semi-algebraic subset of
${\mathbb{R}}^{\Pi}.$
In light of Theorem~\ref{TarskiSeidenbergTheorem1}, $W$ is a
semi-algebraic set, and so is $\cap_{\Delta \in \mathcal{F}}
\mathcal{D}_I(\Delta).$

Next we show that $W$ is closed, or equivalently, $W = \overline{W}.$ To see this, take a point  $c^0:=(c_1^0, \ldots, c_p^0) \in \overline{W}.$ 
We show $c \in W.$ Indeed, if $c^0$ is an isolated point of $\overline{W},$ then
$c^0 \in W$ and we are done. So assume that $c^0$ is not isolated in $\overline{W}.$
By definition, $c^0\in\overline{\pi(V(\Delta))}$ for some $\Delta :=
(\Delta_1,\ldots,\Delta_p) \in \mathcal{F}.$ In view of
Lemma~\ref{CurveSelectionLemma1},
there exists a non-constant real analytic mapping
$t \mapsto (\varphi(t), c(t)) \in V(\Delta)$
defined on a small enough interval $(0, \epsilon)$ such that $\lim_{t
\to 0} c(t) = c^0.$ Let us expand $\varphi_j(t), j = 1, \ldots, n,$
and $c_i(t)$, $i=1,\ldots,p$, in terms of the parameter, say
\begin{eqnarray*}
\varphi_j(t) =  x_j^0 t^{q_j} + o(t^{q_j})\quad\text{and}\quad
c_i(t) = c_i^0  + o(1),
\end{eqnarray*}
where $x_j^0 \ne 0$ and $q_j \in {\mathbb Q}$ (note that for all $t$ we have $\varphi(t) \in (\mathbb{R}^*)^n$ and so $\varphi_j(t) \ne 0).$  
Let $q := (q_1,\ldots,q_n)$ and
\begin{eqnarray*}
\widetilde{\Delta}_i & := & \Delta(q, \Delta_i) \quad \textrm{ for all } \quad i = 1, \ldots, p.
\end{eqnarray*}
We prove that $\widetilde{\Delta} := (\widetilde{\Delta}_1,\ldots,\widetilde{\Delta}_p)$ belongs to $\mathcal{F}.$
In fact, if $q = 0,$ then $\widetilde{\Delta} = {\Delta} \in \mathcal{F}$ and there is nothing to prove.  So, assume that $q \ne 0.$ By
definition, we can find a vector $q^0 \in \mathbb{R}^n$ such that
$\Delta_i = \Delta(q^0, \Gamma_i)$ for all $i$. If $\Delta_i=\Gamma_i$
for all $i$, then it is clear that $\widetilde{\Delta} \in
\mathcal{F}.$ Otherwise, we have $q^0 \ne 0$ and $I' :=
\{i\in\{1,\ldots,p\} \ : \  \Delta_i \neq \Gamma_i\} \ne \emptyset.$
Then for each $i\in I'$, there exists $\epsilon_i>0$ such that for
any $\widetilde q$ with $\|\widetilde q - q^0\| \le \epsilon_i$, we
get that
\begin{eqnarray*}
\Delta(\widetilde q,\Gamma_i) &\subset& \Delta(q^0,\Gamma_i) \ = \ \Delta_i.
\end{eqnarray*}
Set $\epsilon := \min_{i\in I'}\epsilon_i>0$ and $\widetilde q := q^0+\epsilon \frac{q}{\|q\|}.$ Clearly $\Delta(\widetilde q,\Gamma_i)\subset\Delta_i$. Hence $\Delta(\widetilde q,\Gamma_i)=\Delta(\widetilde q,\Delta_i)$. Moreover, for any $\kappa\in\Delta_i,$ we have
\begin{eqnarray*}
\langle \widetilde q,\kappa\rangle &=& \langle q^0,\kappa\rangle+\frac{\epsilon}{\|q\|}\langle q,\kappa\rangle \ \ge \ d_i+\frac{\epsilon}{\|q\|} \widetilde{d}_i,
\end{eqnarray*}
where $d_i := \min_{\kappa' \in \Gamma_i}\langle q^0, \kappa'\rangle$ and  $\widetilde{d}_i := \min_{\kappa' \in \Delta_i}\langle q, \kappa'\rangle.$
Observe that the equality happens if and only if $\kappa\in\widetilde\Delta_i$, which yields $\widetilde\Delta_i = \Delta(\widetilde q,\Delta_i).$ Therefore $\widetilde\Delta_i=\Delta(\widetilde q,\Gamma_i)$ and so $\widetilde{\Delta} \in \mathcal{F}.$

On the other hand, a simple calculation shows that
\begin{eqnarray*}
f_{i, \Delta_i} (\varphi(t),c_i(t)) &=& f_{i,
\widetilde{\Delta}_i}(x^0, c^0_i)t^{d_i} + o(t^{d_i}),\\
\varphi_j(t) \frac{\partial f_{i, \Delta_i}}{\partial x_j}(\varphi(t),c_i(t))
&=& x_j^0 \frac{\partial f_{i, \widetilde{\Delta}_i}}{\partial
x_j}(x^0, c^0_i)t^{d_i} + o(t^{d_i}),
\end{eqnarray*}
where $x^0 := (x_1^0, \ldots, x_n^0) \in ({\mathbb R}^*)^{n}.$  As  $(\varphi(t), c(t)) \in V(\Delta)$ for all
$t\in(0, \epsilon),$ we get easily that $(x^0, c^0) \in V(\widetilde{\Delta}) \subset V^*.$
Thus $c^0 \in W,$ as required.
\end{proof}

\begin{proposition} \label{Proposition42}
For each nonempty set $I \subset \{1, \ldots, p\}$ and each $\Delta \in \mathcal{F},$ $\mathcal{D}_I(\Delta)$ contains an open dense semi-algebraic subset of $\mathbb{R}^{\Pi}.$
\end{proposition}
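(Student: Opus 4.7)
The plan is to deduce the proposition from the Transversality Theorem with parameters (Theorem~\ref{SardTheorem}). Fix a nonempty subset $I \subset \{1, \ldots, p\}$ and a tuple $\Delta = (\Delta_1, \ldots, \Delta_p) \in \mathcal{F}.$ I would consider the $C^\infty$ semi-algebraic mapping
$$\Phi \colon (\mathbb{R}^*)^n \times \mathbb{R}^{\Pi} \longrightarrow \mathbb{R}^{\# I}, \qquad (x, c) \mapsto F_{I, \Delta}(x, c),$$
treat $\mathbb{R}^{\Pi}$ as the parameter space, and apply Theorem~\ref{SardTheorem} with $S = \{0\}.$ Since $\#I \le p \le n,$ the transversality $\Phi_c \pitchfork \{0\}$ is exactly the statement that $\rank d_x \Phi_c = \#I$ at every $x \in \Phi_c^{-1}(0),$ which in turn is the same as $\mathcal{V}(I, \Delta, c) = \mathcal{V}_{\rm reg}(I, \Delta, c).$

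The first step is to verify that the \emph{full} map $\Phi$ is transverse to $\{0\}.$ Fix $(x, c) \in \Phi^{-1}(0).$ For each $i \in I,$ the face $\Delta_i$ is nonempty and, being a face of the integer polyhedron $\Gamma_i,$ contains at least one vertex $\kappa_i$ of $\Gamma_i,$ so $\kappa_i \in \Delta_i \cap \mathcal{Z}_i.$ A direct computation gives
$$\frac{\partial f_{i, \Delta_i}}{\partial c_{i, \kappa_i}}(x, c_i) \;=\; x^{\kappa_i} \;\ne\; 0,$$
because $x \in (\mathbb{R}^*)^n.$ Since the parameter coordinates $c_{i, \kappa_i}$ live in pairwise disjoint factors $\mathbb{R}^{m_i}$ of $\mathbb{R}^{\Pi},$ differentiating $\Phi$ only along the directions $\partial/\partial c_{i, \kappa_i}$ for $i \in I$ produces a matrix with a nonzero diagonal indexed by $I,$ hence of rank $\#I.$ Thus $d\Phi_{(x,c)}$ is surjective and $\Phi \pitchfork \{0\}.$

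The second step is bookkeeping. Theorem~\ref{SardTheorem} yields an open dense semi-algebraic set $Q \subset \mathbb{R}^{\Pi}$ on which $\Phi_c \pitchfork \{0\}.$ For $c \in Q$ and any $x \in \mathcal{V}(I, \Delta, c),$ the Jacobian $\bigl(\partial f_{i, \Delta_i}/\partial x_j(x, c_i)\bigr)_{i \in I,\, 1 \le j \le n}$ has rank $\#I;$ multiplying the $j$-th column by $x_j \ne 0$ does not alter the rank, so $\rank \bigl(xDF_{I, \Delta}(x, c)\bigr) = \#I,$ i.e., $x \in \mathcal{V}_{\rm reg}(I, \Delta, c).$ I would then intersect $Q$ with
$$U := \{c \in \mathbb{R}^{\Pi} \ : \ \mathcal{G}(c) = \Gamma\},$$
which is open, dense and semi-algebraic since it is defined by the non-vanishing of the finitely many coordinates $c_{i, v}$ attached to the vertices $v$ of each $\Gamma_i.$ The intersection $Q \cap U$ is then an open dense semi-algebraic subset of $\mathbb{R}^{\Pi}$ contained in $\mathcal{D}_I(\Delta).$

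The only genuinely delicate point is locating, for each $i \in I,$ a parameter on which exactly one component of $\Phi$ depends linearly and whose associated monomial is nonzero on $(\mathbb{R}^*)^n;$ this is precisely what the existence of an integer vertex in each nonempty face $\Delta_i$ guarantees. Everything else is routine, so no serious obstacle is expected beyond carefully unwinding the definitions of $\mathcal{V}(I,\Delta,c)$ and $\mathcal{V}_{\rm reg}(I,\Delta,c).$
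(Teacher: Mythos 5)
Your proof is correct, and it takes a genuinely different (and much shorter) route than the paper's. The key points all check: every nonempty face $\Delta_i=\Delta(q,\Gamma_i)$ contains a vertex of $\Gamma_i$, which is a lattice point of $\mathcal{Z}_i$, so $c_{i,\kappa_i}$ is a genuine coordinate of $\mathbb{R}^{m_i}$ on which only the $i$-th component of $F_{I,\Delta}$ depends, linearly and with derivative $x^{\kappa_i}\ne 0$ on $(\mathbb{R}^*)^n$; hence the parametrized map is a submersion, Theorem~\ref{SardTheorem} applies, and since on $(\mathbb{R}^*)^n$ the rank of $xDF_{I,\Delta}(x,c)$ equals that of the ordinary Jacobian, transversality of the slice map to $\{0\}$ (including the vacuous case of an empty zero set) is precisely $\mathcal{V}(I,\Delta,c)=\mathcal{V}_{\mathrm{reg}}(I,\Delta,c)$; finally $\{c:\mathcal{G}(c)=\Gamma\}$ is indeed the open dense semi-algebraic set where the vertex coefficients are nonzero, since a vertex of $\Gamma_i$ never lies in the convex hull of the remaining points of $\mathcal{Z}_i$. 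The paper argues differently: it first reduces to $I=\{1,\dots,p\}$ and $\Delta=\Gamma$, then splits according to whether $d:=\dim(\Gamma_1+\cdots+\Gamma_p)$ equals $n$ or is smaller; in the full-dimensional case it inducts on $p$ and applies Theorem~\ref{SardTheorem} not to the face system itself but to augmented maps whose zero set encodes the degeneracy locus (for $p=1$ the map $(x\nabla f_1,f_1)$ into $\mathbb{R}^{n+1}$, for $p>1$ a Lagrange-multiplier map $\Psi(x,c,\lambda)=(\sum_i\lambda_i x\nabla f_i,f_1,\dots,f_p)$, whose generic slices must miss $0$, respectively be local diffeomorphisms, contradicting $\lambda$-scaling); the transversality of these augmented families requires the vertex-difference vectors to span $\mathbb{R}^n$, which is exactly what forces the dimension dichotomy and, when $d<n$, the unimodular monomial change of coordinates built from Lemma~\ref{Integer}. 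By applying parametric transversality to the values of the face system alone, you only need the invertible diagonal block coming from one vertex coefficient per component, so the induction, the case analysis, and the integer-linear-algebra lemma all become unnecessary; what the paper's longer route buys is an explicit description of the degeneracy locus and machinery (the face manipulation and coordinate change) that runs parallel to, and is reused in spirit for, the openness statement of Proposition~\ref{Proposition41}, which your argument does not address and does not need to.
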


Before proving Proposition~\ref{Proposition42}, we need the following auxiliary lemma.
\begin{lemma}\label{Integer} 
Let $d$ and $n$ be integers such that $0\leqslant d\leqslant n.$ 
Let $q^1,\dots,q^{n-d}$ be linearly independent vectors in $\mathbb Z^n$ and $S\subset\mathbb Z_+^n.$
Assume that $\langle q^j ,\kappa \rangle\geqslant 0$ for all $\kappa\in S$ and $j=1,\dots,n-d$.
Then the following statements hold:
\begin{enumerate}[{\rm (i)}]
\item There are $q^{n-d+1},\dots,q^n\subset\{e^1,\dots,e^n\}$ such that the system $\{q^1,\dots,q^{n}\}$ is linearly independent.
\item There exist vectors $\widetilde q^1,\dots,\widetilde q^{n}\in \mathbb Z^n$ such that
\begin{enumerate}
\item [\rm (ii1)] $\span\{\widetilde q^1,\dots,\widetilde q^{k}\}=\span\{q^1,\dots,q^{k}\}$ for $k=1,\dots,n$;
\item [\rm (ii2)] $\langle \widetilde q^j ,\kappa \rangle\geqslant 0$ for all $\kappa\in S$ and $j=1,\dots,n$;
\item [\rm (ii3)] $\co\{0,\widetilde q^1,\dots,\widetilde q^{n}\} \cap\mathbb Z^n = \{0,\widetilde q^1,\dots,\widetilde q^{n}\}$; and
\item [\rm (ii4)] $|\det(\widetilde q^1,\dots,\widetilde q^{n})|=1.$ 
\end{enumerate}
Here for a set $X \subset \mathbb{R}^n,$ the notation $\span X$ denotes the smallest linear subspace containing $X$ and the notation 
$\co(X)$ denotes the convex hull of $X.$

\end{enumerate}
\end{lemma}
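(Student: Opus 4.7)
The plan is to handle (i) by an elementary extension argument and then to build the $\tilde q^k$ of (ii) inductively, respecting a nested chain of sublattices. For (i), the $(n-d)\times n$ integer matrix with rows $q^1,\ldots,q^{n-d}$ has rank $n-d$, so some $(n-d)\times(n-d)$ minor indexed by columns $\{\ell_1,\ldots,\ell_{n-d}\}$ is nonzero; appending the standard basis vectors $e^{j_1},\ldots,e^{j_d}$ for the complementary indices $\{j_1,\ldots,j_d\}$ gives a linearly independent list. Each $e^{j_i}$ satisfies $\langle e^{j_i},\kappa\rangle=\kappa_{j_i}\ge 0$ on $S\subset\mathbb{Z}_+^n$, so the extended list $q^1,\ldots,q^n$ inherits the nonnegativity hypothesis used in (ii).

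For (ii), I first observe that (ii3) follows formally from (ii4): if $|\det(\tilde q^1,\ldots,\tilde q^n)|=1$, then $\tilde q^1,\ldots,\tilde q^n$ is a $\mathbb{Z}$-basis of $\mathbb{Z}^n$, so a lattice point in the simplex --- expressible as $\sum\lambda_i\tilde q^i$ with $\lambda_i\ge 0$ and $\sum\lambda_i\le 1$ --- must have $\lambda_i\in\mathbb{Z}$ by uniqueness of the basis expansion, forcing $\lambda_i\in\{0,1\}$ with at most one index equal to one. Hence only (ii1), (ii2), (ii4) need to be engineered.

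I would construct $\tilde q^1,\ldots,\tilde q^n$ recursively so that $\tilde q^1,\ldots,\tilde q^k$ is a $\mathbb{Z}$-basis of the saturated sublattice $L_k:=\mathbb{Z}^n\cap\span\{q^1,\ldots,q^k\}$. Given $\tilde q^1,\ldots,\tilde q^{k-1}$, the quotient $L_k/L_{k-1}$ is torsion-free of rank one, hence generated by some class $[v]$ with $v\in L_k$; writing $v=\sum_{i\le k}a_iq^i$ one has $a_k\ne 0$, and I flip the sign of $v$ if needed so that $a_k>0$. Then for every $\kappa\in S$ perpendicular to $L_{k-1}$, $\langle v,\kappa\rangle=a_k\langle q^k,\kappa\rangle\ge 0$. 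For any other $\kappa\in S$, the integer pairing $\langle\tilde q^j,\kappa\rangle$ is at least $1$ for some $j<k$. Assuming $S$ is finite (which is the case in the intended application, where $S$ is the integer support of a polynomial), I pick a single integer $N$ exceeding $\max_{\kappa\in S}|\langle v,\kappa\rangle|$ and set
\[
\tilde q^k \ :=\ v + N\sum_{j<k}\tilde q^j.
\]
This preserves the class $[v]$ in $L_k/L_{k-1}$, so $\tilde q^1,\ldots,\tilde q^k$ is still a $\mathbb{Z}$-basis of $L_k$; preserves (ii1) by construction; and achieves $\langle\tilde q^k,\kappa\rangle\ge 0$ throughout $S$. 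Iterating to $k=n$ yields $L_n=\mathbb{Z}^n$ and hence (ii4).

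The main obstacle will be the sign calibration at each step: if $a_k<0$, any $\kappa\in S$ perpendicular to $L_{k-1}$ with $\langle q^k,\kappa\rangle>0$ produces $\langle v,\kappa\rangle<0$, and no combination of the earlier $\tilde q^j$ can correct this since they all pair to zero with such $\kappa$. Only once $a_k$ has been forced positive does the trick of inflating $v$ by large nonnegative multiples of $\tilde q^1,\ldots,\tilde q^{k-1}$ push $\tilde q^k$ into the cone $\{w:\langle w,\kappa\rangle\ge 0\ \forall\kappa\in S\}$.
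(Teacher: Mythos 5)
Your argument is correct in substance, but it takes a genuinely different route from the paper's. The paper proves (ii3) directly by a lattice-point counting argument: at each inductive step it looks at the simplex $\co\{0,\widetilde q^1,\dots,\widetilde q^k,q^{k+1}\}$, and if it contains an extra lattice point $a^1=\sum_{l\le k}t_l\widetilde q^l+t_{k+1}q^{k+1}$ (with $0\le t_l<1$, $t_{k+1}>0$), it replaces $q^{k+1}$ by $a^1$ — nonnegativity on $S$ is preserved because $a^1$ is a nonnegative combination — and the number of extra lattice points strictly decreases, so finitely many replacements yield $\widetilde q^{k+1}$; the paper then deduces (ii4) from (ii3) by citing an exercise in Fulton's book. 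You instead argue lattice-theoretically: you arrange that $\widetilde q^1,\dots,\widetilde q^k$ is a $\mathbb{Z}$-basis of the saturation $L_k=\mathbb{Z}^n\cap\span\{q^1,\dots,q^k\}$, by lifting a generator of the rank-one torsion-free quotient $L_k/L_{k-1}$, calibrating its sign against $q^k$, and pushing it into the nonnegativity cone by adding a large multiple of $\widetilde q^1+\dots+\widetilde q^{k-1}$. This makes (ii4) automatic at $k=n$ and you get (ii3) from (ii4), which is the elementary direction (a lattice point $\sum\lambda_i\widetilde q^i$ of the simplex has integral $\lambda_i$ when the $\widetilde q^i$ form a $\mathbb{Z}$-basis). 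What your route buys is self-containedness: you never need the implication from ``the simplex contains no extra lattice points'' to unimodularity, which is the delicate direction (empty lattice simplices need not be unimodular in dimension $\ge 3$), whereas the paper relies on the cited exercise precisely there. What the paper's route buys is that it never invokes the structure theory of quotients of sublattices and works verbatim for an arbitrary $S\subset\mathbb{Z}_+^n$.

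The one blemish in your write-up is the added assumption that $S$ is finite; the lemma as stated allows infinite $S$ (finiteness does hold in the paper's application, but your proof should match the statement). The assumption is easily removed without changing your scheme: since $q^1,\dots,q^{k-1}\in L_{k-1}$, you can write $v=a_kq^k+\sum_{j<k}\beta_j\widetilde q^j$ with $a_k>0$ and $\beta_j\in\mathbb{Q}$ independent of $\kappa$, and choose the integer $N\ge\max_{j<k}|\beta_j|$. Then for every $\kappa\in S$ one has $\langle v+N\sum_{j<k}\widetilde q^j,\kappa\rangle=a_k\langle q^k,\kappa\rangle+\sum_{j<k}(\beta_j+N)\langle\widetilde q^j,\kappa\rangle\ge 0$, with no maximum over $S$ required; this choice of $N$ also dispenses with your case split between $\kappa$ perpendicular to $L_{k-1}$ and not.
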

\begin{proof} 
As (i) is clear, it remains to prove (ii).
Clearly $\langle q^j ,\kappa \rangle\geqslant 0$ for all $\kappa\in S$ and $j=n-d+1,\dots,n$. 
The proof is done by induction as follows. 

Set $\displaystyle\widetilde q^1:=\frac{q^1}{N}$ where $N\geqslant 1$ is the integer such that $\displaystyle\frac{q^1}{N}\in\mathbb Z^n$ and $[0,\frac{q^1}{N}]\cap\mathbb Z^n=\{0,\frac{q^1}{N}\}.$ For $1\leqslant k < n,$ assume that we have constructed a linearly independent system $\{\widetilde q^1,\dots,\widetilde q^{k}\}$ such that
\begin{itemize}
\item $\span\{\widetilde q^1,\dots,\widetilde q^{k}\}=\span\{q^1,\dots,q^{k}\}$; 
\item $\langle \widetilde q^j ,\kappa \rangle\geqslant 0$ for all $\kappa\in S$ and $j=1,\dots,k$; and
\item $\co\{0,\widetilde q^1,\dots,\widetilde q^{k}\}\cap\mathbb Z^n=\{0,\widetilde q^1,\dots,\widetilde q^{k}\}.$
\end{itemize}
If $\co\{0,\widetilde q^1,\dots,\widetilde q^k,q^{k+1}\}\cap\mathbb Z^n=\{0,\widetilde q^1,\dots,\widetilde q^k, q^{k+1}\},$ then we are done by setting $\widetilde q^{k+1}= q^{k+1}.$
Otherwise, set 
$$N_1:=\#((\co\{0,\widetilde q^1,\dots,\widetilde q^k,q^{k+1}\}\cap\mathbb Z^n)\setminus\{0,\widetilde q^1,\dots,\widetilde q^k, q^{k+1}\})>0.$$
Let $a^1\in (\co\{0,\widetilde q^1,\dots,\widetilde q^k,q^{k+1}\}\cap\mathbb Z^n)\setminus\{0,\widetilde q^1,\dots,\widetilde q^k, q^{k+1}\}.$
Then $$a^1=\sum_{l=1}^{k} t_l \widetilde q^l+t_{k+1} q^{k+1}\ \text{ with }\ 0\leqslant t_1,\dots,t_{k+1} <1.$$
Observe that $t_{k+1}>0$ since otherwise 
$a^1\in (\co\{0,\widetilde q^1,\dots,\widetilde q^k\}\cap\mathbb Z^n)\setminus\{0,\widetilde q^1,\dots,\widetilde q^k\},$ which is a contradiction.
Therefore the system $\{\widetilde q^1,\dots,\widetilde q^{k},a^1\}$ is linearly independent and 
$$\span\{\widetilde q^1,\dots,\widetilde q^{k},a^1\}=\span\{q^1,\dots,q^{k},q^{k+1}\}.$$ 
In addition, for all $\kappa\in S$, we have
$$\langle a^1 ,\kappa \rangle=\left\langle \sum_{l=1}^{k} t_l \widetilde q^l+t_{k+1} q^{k+1} ,\kappa \right\rangle=\sum_{l=1}^{k} t_l \langle\widetilde q^l,\kappa\rangle+t_{k+1}\langle q^{k+1},\kappa\rangle\geqslant 0.$$
Let 
$$N_2:=\#((\co\{0,\widetilde q^1,\dots,\widetilde q^k,a^{1}\}\cap\mathbb Z^n)\setminus\{0,\widetilde q^1,\dots,\widetilde q^k, a^{1}\}).$$
Clearly $N_2<N_1.$ 
If $N_2=0$, set $\widetilde q^{k+1}= a^{1}$ and we are done.
Otherwise, since $N_2$ is finite, by repeating the procedure finitely many times, we must get the vector $\widetilde q^{k+1}$ such that 
\begin{itemize}
\item $\span\{\widetilde q^1,\dots,\widetilde q^{k+1}\}=\span\{q^1,\dots,q^{k+1}\}$; 
\item $\langle \widetilde q^j ,\kappa \rangle\geqslant 0$ for all $\kappa\in S$ and $j=1,\dots,k+1$; and
\item $\co\{0,\widetilde q^1,\dots,\widetilde q^{k+1}\}\cap\mathbb Z^n=\{0,\widetilde q^1,\dots,\widetilde q^{k+1}\}.$
\end{itemize}
By induction, (ii1),~(ii2) and~(ii3) follow. By applying~\cite[Exercises of page 48]{Fulton1993}, (ii4) follows from~(ii3). The lemma is proved.
\end{proof}

\begin{proof} [Proof of Proposition~\ref{Proposition42}]
As in the proof of Proposition~\ref{Proposition41}, we may assume that
$I =\{1, \ldots, p\}.$ Furthermore, observe that for any
$(c_1,\ldots,c_p) \in \mathcal{D}_I(\Delta),$ all coefficients
$c_{i,\kappa},$ with $\kappa \in \mathcal{Z}_i 
\setminus \Delta_i$ and $i = 1,\ldots, p,$ can be replaced by any
nonzero real numbers and the resulting $(c_1,\ldots,c_p)$ still belong to $\mathcal{D}_I(\Delta).$ Consequently, without loss of generality, we may assume that $\Delta_i = \Gamma_i$ for all $i.$ In other words, we need to show that the set
\begin{equation}\label{DIG}
	\mathcal {D}_{I}(\Gamma) = \left\{c\in\mathbb{R}^{\Pi}\ :\  \mathcal{G}(c)=\Gamma,\	
\mathcal{V}(I, \Gamma, c)=\mathcal{V}_{\text{\upshape reg}}(I, \Gamma,
c) \right\},
\end{equation}
contains an open dense semi-algebraic subset of $\mathbb{R}^{\Pi}.$ 

It is clear that if there exists an index $i_0$ such that $m_{i_0} = 1$ (i.e., $f_{i_0}(x, c_{i_0})$ is a monomial), then $\{f_{i_0}(x, c_{i_0}) = 0\} \subset \{x_1 \cdots x_n=0\}$ for $c_{i_0}\ne 0;$ hence $(\mathbb{R}^*)^{m_1}\times \cdots \times (\mathbb{R}^*)^{m_p} \subset \mathcal{D}_I(\Gamma)$ and the problem is trivial.  So in what follows we will assume that $m_{i} > 1$ for every $i = 1, \ldots, p.$
Let $\Gamma_1+\dots+\Gamma_p$ be the Minkowski sum and set $ d := \dim(\Gamma_1+\dots+\Gamma_p).$
There are two cases to consider:

\subsubsection*{Case 1: $d = n$} 
For each $i = 1, \ldots, p,$ let $v^{i1}, \ldots, v^{ir_i}$ be the vertices of $\Gamma_i$. Note that $r_i>1$ for every $i$ by the assumption $m_i > 1.$ 
Let  
\begin{eqnarray*}
w^{ij} &:=& v^{ij} - v^{ir_i} \quad \textrm{ for } \quad j = 1,\ldots, r_i - 1.
\end{eqnarray*}
It is not hard to see that
\begin{eqnarray}\label{Lemma42}
\mathrm{rank} \{w^{11}, \ldots, w^{1(r_1 - 1)}, \ldots, w^{p1}, \ldots, w^{p(r_p-1)}\} = \dim(\Gamma_1+\cdots+\Gamma_p)=n.
\end{eqnarray}
%

Now we proceed by induction on $p$, the number of polynomials. 
In what follows, $c_{i, j}$ stands for the coefficient of the monomial $x^{v^{ij}}$ in $f_i(x, c_i).$ 

Firstly, let $p = 1$ and consider the semi-algebraic mapping
$$\Phi \colon (\mathbb{R}^*)^n   \times \mathbb{R}^{m_1} \to \mathbb{R}^{n+1}, \quad  (x, c_1)	   \mapsto \left(x_1\frac{\partial f_1}{\partial x_1}(x, c_1), \ldots, x_n \frac{\partial f_1}{\partial x_n}(x, c_1), f_1(x,c_1)\right).$$
The Jacobian matrix $D\Phi$ of $\Phi$ contains the following matrix
\begin{equation*}
\frac{\partial \Phi}{\partial (c_{1, 1}, \ldots, c_{1, r_1})} = 
\begin{pmatrix}
x^{v^{11}}v^{11} & \cdots & x^{v^{1 (r_1-1)}}v^{1 (r_1-1)} & x^{v^{1r_1}}v^{1r_1}\\
x^{v^{11}}       & \cdots & x^{v^{1 (r_1-1)}}			& x^{v^{1r_1}}
\end{pmatrix},
\end{equation*}
where $v^{1j}\ (j=1,\dots,r_1)$ are written as column vectors. The rank of $\dis\frac{\partial \Phi}{\partial (c_{1, 1}, \ldots, c_{1, r_1})}$ is equal to the rank of the following matrix
$$M_1 : = 
\begin{pmatrix}
v^{11} & \cdots & v^{1 (r_1-1)} & v^{1r_1} \\
1      & \cdots & 1 		    & 1
\end{pmatrix}.$$
The following matrix has the same rank as $M_1$
$$M_2: = 
\begin{pmatrix} 
v^{11}-v^{1r_1} & \cdots & v^{1 (r_1-1)} - v^{1r_1} & v^{1r_1} \\
0               & \cdots & 0                    & 1
\end{pmatrix} =
\begin{pmatrix} 
w^{11} & \cdots & w^{1 (r_1-1)} & v^{1r_1} \\
0      & \cdots & 0           & 1
\end{pmatrix}.$$
In light of~\eqref{Lemma42}, 
$\mathrm{rank} M_2 = n + 1,$ and so $\mathrm{rank}(D\Phi) = n + 1.$ Consequently  $\Phi\pitchfork\{0\}$ in $\mathbb{R}^{n+1}.$ By Theorem~\ref{SardTheorem}, the set 
$$P_1:=\{c_1\in \mathbb{R}^{m_1} \ : \  \Phi(\cdot, c_1)\pitchfork\{0\}\}$$
contains an open dense semi-algebraic subset of $\mathbb{R}^{m_1}.$ Observe that the mapping $\Phi(\cdot, c_1) \colon (\mathbb{R}^*)^n\to\mathbb{R}^{n+1}$ is transversal to $\{0\}$ if and only if $\mathrm{Im} \Phi(\cdot, c_1)\cap\{0\} = \emptyset.$ Hence, we have $\{\Phi(\cdot, c_1)=0\}=\emptyset$ for $c_1\in P_1.$ Consequently, $P_1 \subset \mathcal{D}_I(\Gamma),$ which completes the proof for the case $p = 1.$

Now assume that $p > 1.$ Recall that $I=\{1,\dots,p\}.$ By induction, for each $l = 1,\ldots, p,$ the set $\mathcal{D}_{I \setminus \{l\}}(\Gamma)$ contains an open dense semi-algebraic set $\widetilde U_l$ in 
$$\widehat{\mathbb{R}}^{\Pi}_l:=\mathbb{R}^{m_1}\times\cdots\times\mathbb{R}^{m_{l-1}}\times\mathbb{R}^{m_{l+1}}\times\cdots\times\mathbb{R}^{m_{p}}.$$ 
Let $U_l\subset\mathbb{R}^{\Pi}$ be the set obtained from $\widetilde U_l\times\mathbb R^{m_l}$ by the following permutation of coordinates
$$\widehat{\mathbb{R}}^{\Pi}_l\times \mathbb{R}^{m_{l}}\to \mathbb{R}^{\Pi}, \ \ (c_1,\dots,c_{l-1},c_{l+1},\dots,c_p,c_l)\mapsto(c_1,\dots,c_p). $$
Consider the semi-algebraic mapping
\begin{eqnarray*}
\Psi \colon (\mathbb{R}^*)^n  \times   ({U}_1 \cap  \cdots \cap  {U}_p)  \times (\mathbb{R}^p-\{0\}) & \to  & \mathbb{R}^{n}  \times  \mathbb{R}^p, \\
(x, c_1, \ldots, c_p, \lambda) & \mapsto & \left(\dis\sum_{i=1}^p \lambda_i x\nabla f_i(x, c_i)  ,  f_1(x, c_1),\ldots,f_p(x, c_p)\right),
\end{eqnarray*}
where, for simplicity of notation, we let 
\begin{eqnarray*}
x\nabla f_i(x,c_i) &:=& \left(x_1\frac{\partial f_i}{\partial x_1}(x,c_i), \ldots,x_n\frac{\partial f_i}{\partial x_n}(x,c_i)\right).
\end{eqnarray*}
Note that if $(x, c, \lambda) \in \Psi^{-1}(0)$, then $\lambda_1
\cdots \lambda_p\not=0$, since if $\lambda_{l}=0 $ for some $l,$ then 
$$\sum_{i \ne l} \lambda_i x\nabla f_i(x, c_i) = 0,$$ 
which implies that $(c_1,\dots,c_{l-1},c_{l+1},\dots,c_p)\notin\widetilde U_l.$
Hence $(c_1,\dots,c_{p})\notin U_l$ which is a contradiction.
The Jacobian matrix $D\Psi$ of $\Psi$ contains the matrix 
\begin{eqnarray*}
M_3 &:=& \frac{\partial \Psi}{\partial [(c_{1, 1}, \ldots, c_{1, r_1}), \ldots, (c_{p, 1}, \ldots, c_{p, r_p})]} 
\ = \ \left(\begin{array}{c|c|c|c} B_1 & B_2 & \cdots & B_p\end{array}\right),
\end{eqnarray*}
where
\begin{eqnarray*}
	B_i &:=& \left(
\begin{array}{cccc}
\lambda_i x^{v^{i1}}v^{i1} & \cdots  & \lambda_i x^{v^{i(r_i-1)}}v^{i(r_i-1)} & \lambda_i x^{v^{ir_i}}v^{ir_i}\\
0                & \cdots  & 0                    & 0                    \\
\vdots           & \cdots  & \vdots               & \vdots              \\
x^{v^{i1}}       & \cdots  &  x^{v^{i(r_i-1)}}    & x^{v^{ir_i}}\\
\vdots           & \cdots  & \vdots               & \vdots              \\
0                & \cdots  & 0                    & 0                    \\
\end{array}\right)
\begin{array}{cccc}
\\
\\
\\
\text{$\leftarrow (n+i)\text{th}$ row}\\
\\
\\
\end{array}
\end{eqnarray*}
for $i = 1,\ldots,p.$ Here, $v^{ij}$ are written as column vectors. 

If $(x, c, \lambda)\in \Psi^{-1}(0)$, we know that
$\lambda_ix^{v^{ij}} \ne  0$ for $i = 1,\ldots,p,$ and $j = 1,\ldots,
r_i.$ Hence, for $i=1, \ldots, p,$ $B_i$ has the same rank as the matrix
\begin{eqnarray*}
	C_i &:=& \left(
\begin{array}{cccc}
v^{i1} & \cdots  & v^{i(r_i-1)}          & v^{ir_i}\\
0                & \cdots  & 0           & 0                    \\
\vdots           & \cdots  & \vdots      & \vdots              \\
1                & \cdots  & 1           & 1\\
\vdots           & \cdots  & \vdots      & \vdots              \\
0                & \cdots  & 0           & 0                    \\
\end{array}\right)
\begin{array}{cccc}
\\
\\
\\
\text{$\leftarrow (n+i)\text{th}$ row}\\
\\
\\
\end{array}
\end{eqnarray*}
By subtracting the last column from the first $r_i-1$ columns of $C_i$, we get the following matrix having the same rank as $C_i$:
\begin{eqnarray*}
	D_i &:=& \left(
\begin{array}{cccc}
w^{i1} & \cdots  & w^{i(r_i-1)}          & v^{ir_i}\\
0                & \cdots  & 0           & 0                    \\
\vdots           & \cdots  & \vdots      & \vdots              \\
0                & \cdots  & 0           & 1\\
\vdots           & \cdots  & \vdots      & \vdots              \\
0                & \cdots  & 0           & 0                    \\
\end{array}\right)
\begin{array}{cccc}
\\
\\
\\
\text{$\leftarrow (n+i)\text{th}$ row}\\
\\
\\
\end{array}
\end{eqnarray*}
It is clear that $M_3$ has the same rank as 
$$M_4:=\left(\begin{array}{c|c|c|c} D_1 & D_2 & \cdots & D_p\end{array}\right).$$
Rearranging the columns of $M_4$ by moving the last column of each $D_i$ to the right of $M_4$, we get
\begin{eqnarray*}
M_5 &:=& \left (
\begin{array}{ccccccc|ccccccccc}
$$w^{11} & \cdots  & w^{1 (r_1-1)} & \cdots  & w^{p1} & \cdots  & w^{p(r_p-1)} & v^{1r_1} & \cdots  & v^{pr_p}$$\\
\hline
$$       &     &  		     &     &        &     &             &          &     &     $$\\
$$       &     &  		     &     &        &     &             & 1        &     & \textbf{\LARGE 0}       $$\\
$$	     &     &             & \textbf{\LARGE 0}&&&             &          & \ddots &  $$\\
$$       &     &             &     &        &     &    	        & \textbf{\LARGE 0}&& 1  $$\\
\end{array}\right).\end{eqnarray*}
In view of~\eqref{Lemma42}, 
$\mathrm{rank}{M_5}=n+p$ on $\Psi^{-1}(0)$. Thus $D\Psi$ is of maximal rank on $\Psi^{-1}(0)$, namely $\Psi\pitchfork\{0\}$ in $\mathbb{R}^{n+p}$. 
Note that ${U}_1\cap  \cdots \cap {U}_p$ is an open dense
semi-algebraic set in $\mathbb{R}^{\Pi}.$ This, together with 
Theorem~\ref{SardTheorem}, implies that the set 
$$P_2 : =\{c\in  {U}_1\cap  \cdots \cap {U}_p \ : \  \Psi(\cdot, c, \cdot)\pitchfork\{0\}\}$$
contains an open dense semi-algebraic subset of $\mathbb{R}^{\Pi}.$ Since $\Psi(\cdot, c, \cdot) \colon (\mathbb{R}^*)^n\times(\mathbb{R}^p-\{0\})\to\mathbb{R}^{n}\times\mathbb{R}^p$ is a mapping between two manifolds of same dimension, the transversality condition implies that $\Psi(\cdot, c, \cdot)$ is a local diffeomorphism on $(\Psi(\cdot, c, \cdot))^{-1}(0)$ for each $c\in P_2$.

Let $c\in P_2$. If $(\Psi(\cdot, c, \cdot))^{-1}(0)\not=\emptyset$, there exists $(x, \lambda) \in (\mathbb{R}^*)^n\times(\mathbb{R}^p-\{0\})$ such that $\Psi(x, c, \lambda) = 0.$ Note that, for every $t\in {\mathbb R} \setminus \{0\},$ $\Psi(x, c, t\lambda)=0$. So $\Psi(\cdot, c, \cdot)$ is not a local diffeomorphism at $(x, \lambda),$ which is a contradiction. Hence $(\Psi(\cdot, c, \cdot))^{-1}(0)=\emptyset.$ Consequently, $c \in \mathcal{D}_I(\Gamma).$ Therefore, $P_2 \subset \mathcal{D}_I(\Gamma),$ which ends the proof in Case~1.

\subsubsection*{Case 2: $d < n$} 
It is clear that there exist linearly independent vectors $q^1, \ldots, q^{n-d}\in \mathbb{Z}^n$ and numbers $d_{1}, \ldots, d_{n - d} \in \mathbb{R}$ such that the set $\Gamma_1 + \cdots + \Gamma_p$ is contained in the affine space
\begin{eqnarray*}
L &:=& \{\kappa  \in \mathbb{R}^n \ : \ \langle q^j , \kappa  \rangle =d_j,j=1,\ldots,n-d\}.
\end{eqnarray*}
Consequently, we deduce the following lemma.
\begin{lemma}\label{Lemma41}
For each $i \in \{1, \ldots, p\}$ there exist real numbers $d_{ij}, j=1,\ldots, n - d,$ such that $\Gamma_i\subset L_i,$ where 
\begin{equation*}
L_i := \{\kappa \in \mathbb{R}^n \ : \ \langle   q^j, \kappa \rangle   = d_{ij}, j=1,\ldots,n-d\},
\end{equation*}
i.e., $\Gamma_i$ is contained in an affine space parallel to $L.$
\end{lemma}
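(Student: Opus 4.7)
My plan is to exploit the elementary fact that a linear functional which takes a constant value on a Minkowski sum of convex sets must take a (possibly different) constant value on each of the individual summands. The setup makes this almost immediate: we already know $\Gamma_1+\cdots+\Gamma_p\subset L$, so each linear functional $\langle q^j,\cdot\rangle$ evaluates to the single number $d_j$ on every point of the Minkowski sum.

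The key step is the following separation argument. Fix an index $i\in\{1,\dots,p\}$ and, for every $l\neq i$, choose and fix an arbitrary point $\kappa_l\in\Gamma_l$. For any two points $\kappa,\kappa'\in\Gamma_i$, the sums
\begin{equation*}
\kappa+\sum_{l\neq i}\kappa_l\quad\text{and}\quad \kappa'+\sum_{l\neq i}\kappa_l
\end{equation*}
both lie in $\Gamma_1+\cdots+\Gamma_p\subset L$. Applying $\langle q^j,\cdot\rangle$ to each and subtracting yields $\langle q^j,\kappa-\kappa'\rangle=0$ for every $j=1,\dots,n-d$. Hence the function $\kappa\mapsto\langle q^j,\kappa\rangle$ is constant on $\Gamma_i$; define $d_{ij}$ to be this constant value. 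Then by construction $\Gamma_i\subset L_i$, and since $L_i$ is cut out by the same linear equations $\langle q^j,\cdot\rangle=d_{ij}$ that define $L$ (only the right-hand sides differ), $L_i$ is an affine translate of $L$.

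There is no real obstacle here: the proof is a single computation, and no additional hypothesis on the $\Gamma_i$ (convenience, non-degeneracy, etc.) is needed beyond the given containment of the Minkowski sum in $L$. As a consistency check, summing the relations $\langle q^j,\kappa_i\rangle=d_{ij}$ over $i=1,\dots,p$ with $\kappa_i\in\Gamma_i$ recovers $d_j=\sum_{i=1}^p d_{ij}$, which matches the defining equations of $L$, as expected.
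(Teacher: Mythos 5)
Your proof is correct and is exactly the standard argument the paper has in mind: the paper simply asserts the lemma as an immediate consequence of $\Gamma_1+\cdots+\Gamma_p\subset L$, and your translation trick (comparing $\kappa+\sum_{l\neq i}\kappa_l$ with $\kappa'+\sum_{l\neq i}\kappa_l$ inside the Minkowski sum) is the natural way to make that implication explicit. No gap; nothing further needed.
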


For each $j = 1, \ldots, n-d,$ let us write $q^j := (q_{j1}, \ldots, q_{jn}).$
Without loss of generality, suppose that $q_{11}>0.$ For $j = 2, \ldots, n-d,$ by replacing $q^j$ by $q^j+Nq^1$ for $N\in\mathbb Z_+$ large enough, we can assume that $q_{j1}>0.$ 
Recall that $I=\{1,\dots,p\}$ and $\mathcal {D}_{I}(\Gamma)$ is defined by~\eqref{DIG}. 
For any $N\in \mathbb{Z}_+$, set $$\Gamma_N:=(\Gamma_1+Ne^1, \ldots, \Gamma_p + Ne^1),$$ 
i.e., $\Gamma_N$ is the translation of $\Gamma$ by the vector $N e^1.$
Then $\Gamma$ and $\Gamma_N$ have the same number of integer points. Furthermore,
$$\mathcal {D}_{I}(\Gamma) = \mathcal {D}_{I}(\Gamma_N).$$
Indeed, for all $c\in\mathbb R^{\Pi},$ it is not hard to check that the set
$$\{x\in(\mathbb R^*)^n:\ F(x,c)=0\ \text{ and }\ \rank(xDF(x,c))=p\}$$
is equal to the set
$$\{x\in(\mathbb R^*)^n:\ x_1^N F(x,c)=0\ \text{and}\ \rank(D(x_1^NF)(x,c))=p\}.$$
So the equality holds.
Observe that 
$$\Gamma_i + Ne^1 \subset\{\kappa \in \mathbb{R}^n \ : \ \langle   q^j, \kappa \rangle   = d_{ij}+Nq_{j1}, j=1,\ldots,n-d\}.$$
Hence, by replacing $\Gamma$ by $\Gamma_N$ for $N\in \mathbb Z_+$ large enough, we can suppose that 
\begin{equation*}\label{dij>0}
d_{ij}\geqslant 0\ \text{ for }\ i=1,\dots,p\ \text{ and }\ j=1,\dots,n-d.
\end{equation*}
In view of Lemma~\ref{Integer}, and for simplicity of notation, we can assume that there are vectors $q^{n-d+1},\dots,q^n\in\mathbb Z^n$ such that $|\det(q^1,\dots,q^n)|=1$ and for $i=1,\dots,p$ and $j=n-d+1,\dots,n,$ we have
\begin{equation}\label{d>0}\langle q^j,\kappa\rangle\geqslant 0\ \text{ for all }\ \kappa \in \Gamma_i.\end{equation}
%
Set $A := (q_{jk})_{j,k=1,\ldots,n}$. Consider the following change of coordinates
\begin{equation}\label{Eqn1}
\left \{
\begin{array}{lcccl}
x_1 &=& u_1^{q_{11}}\ldots u_{n-d}^{q_{(n - d)1}}\ldots u_n^{q_{n1}}, \\
\vdots & \vdots & \vdots \\
x_n &=& u_1^{q_{1n}}\ldots u_{n-d}^{q_{(n - d)n}}\ldots u_n^{q_{nn}}.
\end{array}\right.
\end{equation}
It follows from Lemma~\ref{Lemma41} that for each $\kappa \in \mathcal{Z}_i,$ we have $A \kappa = (d_{i1},\ldots,d_{i (n - d)},
\gamma_\kappa),$ for some $\gamma_\kappa \in \mathbb{Z}_+^d.$ So in the system of coordinates $u_1,\ldots,u_n$, the polynomial $f_i(x,c_i)$ has the form 
\begin{equation}\label{Eqn2}
\dis u_1^{d_{i1}}\ldots u_{n - d}^{d_{i (n - d)}}\sum_{\kappa \in \mathcal{Z}_i}c_{i, \kappa}u'^{\gamma_\kappa}\in\mathbb{R}[u]
\end{equation}
where $u'=(u_{n-d+1},\ldots,u_n).$ Set 
\begin{equation}\label{Eqn3}
g_i(u',c_i) := \dis\sum_{\kappa \in \mathcal{Z}_i}c_{i,\kappa}u'^{\gamma_\kappa}\in\mathbb{R}[u'].
\end{equation}
Since $A$ is an integer matrix and $|\det(A)|=1$, the monomial mapping
\eqref{Eqn1} admits a unique monomial inverse mapping, given by
$A^{-1}$. Hence the system  $f_1(x, c_1) = \cdots = f_p(x, c_p) = 0$ has solutions in $(\mathbb{R}^*)^n$ if and only if the system  $g_1(u',c_1) = \cdots = g_p(u',c_p) = 0$ has solutions in $(\mathbb{R}^*)^d.$ There are two cases to be considered (recall that $ n > d$ and $n \ge p$).

\subsubsection*{Case 2.1: $d < p$} Consider the semi-algebraic mapping
\begin{eqnarray*}
{G} \colon (\mathbb{R}^*)^d  \times \mathbb{R}^{\Pi} \to  \mathbb{R}^p, \quad  (u', c_1,\ldots,c_p) \mapsto (g_1(u', c_1),\ldots,g_p(u', c_p)).
\end{eqnarray*}
For $i = 1, \ldots, p,$ let $\kappa^i \in \mathcal{Z}_i.$
The Jacobian matrix $D {G}$ of ${G}$ contains the following diagonal  matrix
$$\frac{\partial {G}}{\partial (c_{1, \kappa^1},\ldots,c_{p, \kappa^p})} 
\ = \ \begin{pmatrix}
u'^{\gamma_{\kappa^1}} & & \textbf{\large 0} \\
& \ddots & \\
\textbf{\large 0} & & u'^{\gamma_{\kappa^p}}
\end{pmatrix},$$
which has rank $p$ since $u'\in(\mathbb{R}^*)^d.$ Hence $D{G}$ is of
rank $p,$ which yields ${G}\pitchfork \{0\}$ in $\mathbb{R}^p.$ By
Theorem~\ref{SardTheorem}, the set 
$$P_3 := \{c := (c_1,\ldots,c_p) \in \mathbb{R}^{\Pi} \ : \ {G}(\cdot, c)\pitchfork\{0\}\}$$
contains an open dense semi-algebraic subset of $\mathbb{R}^{\Pi}.$ Since $d < p,$ the mapping ${G}(\cdot, c) \colon (\mathbb{R}^*)^d \to \mathbb{R}^p$ is transverse to $\{0\}$ if and only if $\mathrm{Im}  {G}(\cdot, c) \cap \{0\} = \emptyset.$ We deduce, for each $c \in P_3,$ that
$\{{G}(\cdot, c) = 0\}\cap(\mathbb{R}^*)^d=\emptyset,$ and hence  $\{F(\cdot, c) = 0\} \cap (\mathbb{R}^*)^n = \emptyset.$ 
This implies that $P_3 \subset \mathcal{D}_I(\Gamma).$

\subsubsection*{Case 2.2: $d \geqslant p$} We show that this case can be reduced to the case $n = d.$ 
To see this, fix $c := (c_1, \ldots, c_p) \in \mathbb{R}^{\Pi}.$ Under the change of coordinates (\ref{Eqn1}), the polynomials $f_i(x, c_i)\in\mathbb{R}[x]$ and $g_i(u', c_i)\in\mathbb{R}[u']$ have the forms (\ref{Eqn2}) and (\ref{Eqn3}), respectively. 
Recall that $F(x,c)= (f_1(x,c_1),\ldots,f_p(x,c_p))$ and $G(u', c) = (g_1(u',c_1),\ldots,g_p(u',c_p)).$ We have seen that $F(x,c)=0$ has solutions in $(\mathbb{R}^*)^n$ if and only if $G(u',c)=0$ has solutions in $(\mathbb{R}^*)^d.$ 

For any $\kappa\in\mathbb{Z}^n$, let  $\kappa=(\kappa_1,\ldots,\kappa_n)$.  By a direct calculation, in the system of coordinates $u_1,\ldots,u_n$, the matrix $xDF(x,c)$ has the form
\begin{equation}\label{Eqn4}
	\left(u_1^{d_{i1}}\ldots u_{n - d}^{d_{i (n - d)}}\sum_{\kappa \in
		\mathcal{Z}_i}\kappa_lc_{i,
		\kappa}u'^{\gamma_\kappa}\right)_{i=1,\ldots,p,\ l=1,\ldots,n}.
	\end{equation}
Furthermore, let $u'DG(u',c)$ denote the matrix
\[\left(u'_j\frac{\partial g_i}{\partial u'_j}(u',c_i)\right)_{i=1,\ldots,p,\ j=n-d+1,\ldots,n}\]
then we can see that
\begin{eqnarray*}
u'DG(u', c) &=& \left(\sum_{\kappa \in \mathcal{Z}_i}\left(\sum_{l=1}^n q_{j,l}\kappa_l\right)c_{i, \kappa}u'^{\gamma_\kappa}\right)_{i=1,\ldots,p,\ j=n-d+1,\ldots,n}. 
\end{eqnarray*}
Observe that the columns of $u'DG(u',c)$ are linear combinations of the columns of the matrix 
\[
\left(\sum_{\kappa \in \mathcal{Z}_i}\kappa_lc_{i,
		\kappa}u'^{\gamma_\kappa}\right)_{i=1,\ldots,p,\
		l=1,\ldots,n},
\]
which has the same rank as the matrix in \eqref{Eqn4} for any
$u\in(\mathbb{R}^*)^n$. The monomial mapping
\eqref{Eqn1} admits a unique monomial inverse mapping. 
Consequently, we have
$$\{ x \in (\mathbb{R}^*)^n \ : \ F(x,c) =  0 \quad  \textrm{ and } \quad \mathrm{rank}(x D F(x,c)) < p\} \ne \emptyset$$ 
if and only if 
$$\{ u' \in (\mathbb{R}^*)^d \ : \ G(u',c) =  0 \quad  \textrm{ and } \quad \mathrm{rank}(u'DG(u',c)) < p\} \ne \emptyset.$$ 
We note, in addition, the following facts:
\begin{itemize}
\item For each $c_i \in \mathbb{R}^{m_i},$ $g_i(\cdot, c_i)$ is a polynomial function in $d$ variables.
\item Let $\pi\colon\mathbb R^n\to\mathbb R^d$ be the projection on the last $d$ coordinates, and for $i = 1,\dots,n$, we write $A(\Gamma_i)$ for the set
$\{A \kappa \ : \ \kappa \in \Gamma_i\}.$ 
In light of~\eqref{d>0}, $\pi(A(\Gamma_i))$ is a Newton polyhedron and is equal to the Newton polyhedron of $g_i$. As the matrix $A$ is nonsingular, $\gamma_{\kappa}$ in~\eqref{Eqn2} are distinct for different $\kappa,$ and so $\pi(A(\Gamma_i))$ has the same number of integer points as $\Gamma_i.$ Moreover, we have
$$\dim (\pi(A(\Gamma_1))+\cdots+\pi(A(\Gamma_p)))=\dim(\Gamma_1+\cdots+\Gamma_p)=d.$$
\end{itemize}
Therefore, the problem is reduced to the case $n = d.$ This ends the proof of the proposition.
\end{proof}

\subsection*{Acknowledgments}
The authors wish to thank Professor Huy Vui H\`a for useful discussions and the referee(s) for her/his/their careful reading and constructive comments on the manuscript. 
\thanks{$^{\dag}$The first author is partially supported by Vietnam National Foundation for Science and Technology Development (NAFOSTED) grant 101.04-2019.305.}
\thanks{$^\ddag$The second author was supported by the Chinese National Natural Science Foundation under grants 11401074, 11571350.}
\thanks{$^{*}$The third author is partially supported by Vietnam National Foundation for Science and Technology Development (NAFOSTED), grant 101.04-2019.302.}


\begin{thebibliography}{10}

\bibitem{Basu2006}
S.~Basu, R.~Pollack, and M.-F. Roy.
\newblock {\em Algorithms in real algebraic geometry. Second edition.
  Algorithms and Computation in Mathematics}.
\newblock 10. Springer-Verlag, Berlin, 2006.

\bibitem{Benedetti1990}
R.~Benedetti and J.-J. Risler.
\newblock {\em Real algebraic and semi-algebraic sets}.
\newblock Actualit\'es Math\'ematiques. Hermann, Paris, 1990.

\bibitem{Bierstone1988}
E.~Bierstone and P.~D. Milman.
\newblock Semianalytic and subanalytic sets.
\newblock {\em Inst. Hautes {{\'E}}tudes Sci. Publ. Math.}, 67:5--42, 1988.

\bibitem{Bivia2007}
C.~Bivia-Ausina.
\newblock Mixed {{N}}ewton numbers and isolated complete intersection
  singularities.
\newblock {\em Proc. Lond. Math. Soc.}, 94(3):749--771, 2007.

\bibitem{Bochnak1998}
J.~Bochnak, M.~Coste, and M.-F. Roy.
\newblock {\em Real algebraic geometry}, volume~36.
\newblock Springer, Berlin, 1998.

\bibitem{Chen2012}
Y.~Chen and M.~Tib\u{a}r.
\newblock Bifurcation values and monodromy of mixed polynomials.
\newblock {\em Math. Res. Lett.}, 19(1):59--79, 2012.

\bibitem{Clarke1983}
F. H. Clarke.
\newblock {\em Optimization and Nonsmooth Analysis.}
\newblock John Wiley \& Sons, New York, 1983.

\bibitem{Damon1989}
J.~Damon.
\newblock Topological invariants of $\mu$-constant deformations of complete
  intersection singularities.
\newblock {\em Quart. J. Math. Oxford}, 40:139--159, 1989.

\bibitem{Dinh2017-2}
S.~T. Dinh, H.~V. H\`a, and T.~S. Ph\d{a}m.
\newblock H{{\"o}}lder-type global error bounds for non-degenerate polynomial
  systems.
\newblock {\em Acta Math. Vietnam.}, 42(3):563--585, 2017.

\bibitem{Dinh2012}
S.~T. Dinh, H.~V. H\`a, and N.~T. Thao.
\newblock {{\L}}ojasiewicz inequality for polynomial functions on non compact
  domains.
\newblock {\em Internat. J. Math.}, 23(4):1250033, 28 pp, 2012.

\bibitem{Dinh2014-1}
S.~T. Dinh, H.~V. H\`a, N.~T. Thao, and T.~S. Ph\d{a}m.
\newblock Global {{\L}}ojasiewicz-type inequality for non-degenerate polynomial
  maps.
\newblock {\em J. Math. Anal. Appl.}, 410(2):541--560, 2014.

\bibitem{Dinh2013}
S.~T. Dinh, K.~Kurdyka, and O.~L. Gal.
\newblock {{\L}}ojasiewicz inequality on non-compact domains and singularities
  at infinity.
\newblock {\em Internat. J. Math.}, 24(10):1350079, 8 pp, 2013.

\bibitem{Dinh2019}
S.~T. Dinh, K.~Kurdyka, and T.~S. Pham.
\newblock Global mixed {{\L}}ojasiewicz inequalities and asymptotic critical
  values.
\newblock {\em Ann. Polon. Math.}, 123(1):259--266, 2019.

\bibitem{Dinh2016-1}
S.~T. Dinh and T.~S. Ph\d{a}m.
\newblock {{\L}}ojasiewicz-type inequalities with explicit exponents for the
  largest eigenvalue function of real symmetric polynomial matrices.
\newblock {\em Internat. J. Math.}, 27(2):1650012, 27 pp, 2016.

\bibitem{Dinh2017-1}
S.~T. Dinh and T.~S. Ph\d{a}m.
\newblock {{\L}}ojasiewicz inequality with explicit exponent for smallest
  singular value functions.
\newblock {\em J. Complexity}, 41:58--71, 2017.

\bibitem{Ekeland1974}
I.~Ekeland.
\newblock On the variational principle.
\newblock {\em J. Math. Anal. Appl.}, 47:324--353, 1974.

\bibitem{Fernando2014-1}
J.~F. Fernando.
\newblock On chains of prime ideals in rings of semialgebraic functions.
\newblock {\em Q. J. Math.}, 65(3):893--930, 2014.

\bibitem{Fernando2014-2}
J.~F. Fernando and J.~M. Gamboa.
\newblock On {{\L}}ojasiewicz's inequality and the nullstellensatz for rings of
  semialgebraic functions.
\newblock {\em J. Algebra}, 399:475--488, 2014.

\bibitem{Fernando2015}
J.~F. Fernando and J.~M. Gamboa.
\newblock On the krull dimension of rings of continuous semialgebraic
  functions.
\newblock {\em Rev. Mat. Iberoam.}, 31(3):753--766, 2015.

\bibitem{Fichou2016}
G.~Fichou, J.~Huisman, F.~Mangolte, and J.-P. Monnier.
\newblock Fonctions r{{\'e}}gulues.
\newblock {\em J. Reine Angew. Math.}, 718:103--151, 2016.

\bibitem{Fulton1993}
W.~Fulton.
\newblock {\em Introduction to toric varieties}.
\newblock Princeton University Press, 1993.

\bibitem{Gaffney1992}
T.~Gaffney.
\newblock Integral closure of modules and {{W}}hitney equisingularity.
\newblock {\em Invent. Math.}, 107:301--322, 1992.

\bibitem{Goresky1988}
M.~Goresky and R.~MacPherson.
\newblock {\em Stratified Morse theory}.
\newblock Springer, 1988.

\bibitem{Guillemin1974}
V.~Guillemin and A.~Pollack.
\newblock {\em Differential topology}.
\newblock Prentice-Hall, 1974.

\bibitem{HaHV2013}
H.~V. H\`a.
\newblock Global {{H}}\"olderian error bound for non-degenerate polynomials.
\newblock {\em SIAM J. Optim.}, 23(2):917--933, 2013.

\bibitem{HaHV2015-1}
H.~V. H\`a, V.~N. Hu\`ynh, and T.~S. Ph\d{a}m.
\newblock A global version of the classical {{\L}}ojasiewicz inequality.
\newblock {\em J. Math. Anal. Appl.}, 421(2):1559--1572, 2015.

\bibitem{HaHV2017}
H.~V. H\`a and T.~S. Ph\d{a}m.
\newblock {\em Genericity in polynomial optimization}, volume~3 of {\em Series
  on Optimization and Its Applications}.
\newblock World Scientific, Singapore, 2017.

\bibitem{Khovanskii1978}
A.~G. Khovanskii.
\newblock Newton polyhedra and toroidal varieties.
\newblock {\em Funct. Anal. Appl.}, 11:289--296, 1978.

\bibitem{Kouchnirenko1976}
A.~G. Kouchnirenko.
\newblock Polyh\`edres de {{N}}ewton et nombre de {{M}}ilnor.
\newblock {\em Invent. Math.}, 32(1):1--31, 1976.

\bibitem{Kucharz2009}
W.~Kucharz.
\newblock Rational maps in real algebraic geometry.
\newblock {\em Adv. Geom.}, 9(4):517--539, 2009.

\bibitem{Kucharz2019}
W.~Kucharz and K.~Kurdyka.
\newblock From continuous rational to regulous functions.
\newblock {\em In B. Sirakov, P. N. de Souza, and M. Viana, editors, Proc. Int.
  Cong. of Math. (ICM 2018)}, pages 719--747, Rio de Janeiro, Brazil, 2019.

\bibitem{Kurdyka2000-1}
K.~Kurdyka, P.~Orro, and S.~Simon.
\newblock Semialgebraic {{S}}ard Theorem for generalized critical values.
\newblock {\em J. Differential Geom.}, 56(1):67--92, 2000.

\bibitem{Loi2016}
T.~L. Loi.
\newblock {{\L}}ojasiewicz inequalities in o-minimal structures.
\newblock {\em Manuscripta Math.}, 150(1-2):59--72, 2016.
	
\bibitem{Oka1979}
M.~Oka.
\newblock On the bifurcation of the multiplicity and topology of the {{N}}ewton
  boundary.
\newblock {\em J. Math. Soc. Japan}, 31(3):435--450, 1979.

\bibitem{Oka1997}
M.~Oka.
\newblock {\em Non-degenerate complete intersection singularity}.
\newblock Actualit\'es Math\'ematiques, Hermann, Paris, 1997.

\bibitem{Dries1996}
L.~van~den Dries and C.~Miller.
\newblock Geometric categories and o-minimal structures.
\newblock {\em Duke Math. J.}, 84:497--540, 1996.

\end{thebibliography}
\end{document}